\newif\ifpdf
\theoremstyle{plain}
\newtheorem{Thm}{Theorem}
\newtheorem{Lem}{Lemma}
\newtheorem{Prop}{Proposition}
\newtheorem{Cor}{Corollary}
\theoremstyle{definition}
\newtheorem{Def}{Definition}
\newcommand{\ve}{\varepsilon}
\newcommand{\confrac}[2]{
\frac{\displaystyle{
\strut\hfill{#1}\hfill\;\vrule}}
{\displaystyle{
 \strut\vrule\;\hfill{#2}\hfill}}}
\newcommand{\R}{\mathbb R} \newcommand{\Z}{\mathbb Z}
\newcommand{\N}{\mathbb{N}} 
\title{Metric and arithmetic properties of mediant-Rosen maps}
\author{Cor Kraaikamp}
\address{Technische Universiteit Delft and Thomas Stieltjes Institute of
Mathematics\\ EWI\\ Mekelweg 4\\ 2628 CD Delft, the Netherlands}
\email{c.kraaikamp@tudelft.nl}
\author{Hitoshi Nakada}
\address{Department of Mathematics, Keio University, Yokohama, Japan}
\email{nakada@math.keio.ac.jp}
\author{Thomas A. Schmidt}
\address{Oregon State University\\Corvallis, OR 97331, USA}
\email{toms@math.orst.edu}
\thanks{The second author was supported by Bezoekersbeurs \textbf{B 61-620} of
the Nederlandse Organisatie voor Wetenschappelijk Onderzoek (NWO).   The first and third author thank the Center of Excellence of Keio University.} 
\date{December 1, 2008}
\begin{document}

\begin{abstract}
We define maps which induce mediant convergents of Rosen continued
fractions and discuss  arithmetic and metric properties of mediant
convergents. In particular, we show equality of the ergodic
theoretic Lenstra constant with the arithmetic Legendre constant for
each of these maps.   This value is sufficiently small that the mediant Rosen convergents directly determine the Hurwitz constant of Diophantine approximation of the underlying Fuchsian group.   We thus succeed in giving a continued fractions based verification of these Hurwitz values. 
\end{abstract}

\maketitle

\section{Introduction}
Ergodic properties of a number theoretic transformation can in
certain circumstances be studied by way of transformations which
induce it.      In the
classical setting of the \emph{simple continued fraction} (SCF)
expansion  S.~ Ito ~\cite{I} studied maps corresponding to the 
mediant convergents for exactly this purpose.    Motivated by this classical setting, 
 we call any such
inducing transformation a \emph{mediant} map.  
In this paper we give mediant maps for the Rosen continued
fraction maps, allowing us to extend the work of \cite{Na3} and \cite{BKS}. We discuss some
arithmetic and metric properties of mediant convergents arising
from these maps, in particular using techniques of \cite{Na4} to
show that the Legendre constant---determining membership in the
sequence of approximations of a real number---is equal to the
ergodic theoretic Lenstra constant.

One motivation for this work comes from Diophantine approximation in terms of the Rosen fractions.    Diophantine approximation by simple continued fractions has of course a rich history.      In particular, for the regular continued fraction expansion we have the
following classical {\em Borel result}  (cf.~\cite{DK}): if $x$ has
SCF-expansion $x=[0;a_1,a_2,\dots]$, and convergents $P_n/Q_n$,
$n\geq 0$, and if the approximation coefficients
$\vartheta_n=\vartheta_n(x)$, $n\geq 0$, are defined by
$$
\vartheta_n=\vartheta_n(x)=Q_n^2\left| \, x-\frac{P_n}{Q_n}\right|
,\qquad n\geq 0,
$$
then for every $n\geq 1$ and every irrational $x$ we have
$$
\min (\vartheta_{n-1},\vartheta_n,\vartheta_{n+1}) < 1/\sqrt{5},
$$
and the constant $1/\sqrt{5}$ is sharp. Borel's result,
together with the yet  older  {\em Legendre result} --- 
if $p, q\in \Z$, $q>0$, $\gcd (p,q)=1$,  and 
$\left| x-\frac{p}{q}\right| <1/{2q^2}$  then $p/q$ is a SCF-convergent to $x$
 --- 
implies the classical {\em Hurwitz result}:
 for every
irrational $x$ there are infinitely many rationals $p/q$, such
that
$$
\left| x-\frac{p}{q}\right| < \frac{1}{\sqrt{5}q^2}\,.
$$

A geometric aspect of such Diophantine approximation is expressible in terms of the M\"obius action of the modular group $\text{PSL}(2, \mathbb Z)$.     In the middle of the last century,   this was generalized to approximation by the orbit of infinity under a reasonably large class of Fuchsian groups, see the discussion  on  pp. ~334--336 of \cite{LeBk}.    Some thirty years after Rosen \cite{R} introduced his continued fractions to study elements of the Hecke triangle groups (a family of Fuchsian groups including the modular group),     Lehner  \cite{Le1, Le2}  used these continued fractions to begin the study of the quality of approximation by the orbit of infinity  under each of the Hecke groups.   His goal was to use the Rosen fractions to determine   the  analog of the Hurwitz constant for this approximation.

These Hurwitz constants were finally determined by Haas and Series \cite{Ha-Ser}, using techniques of hyperbolic geometry.    Let $G_k$ denote the {\em Hecke group} of index $k\ge 3$, generated by $z \mapsto -1/z$ and $z \mapsto z + \lambda_k$ with $\lambda_k = 2 \cos \pi/k$.   The {\em $G_k$-rationals},  denoted $G_{k}(\infty)$,  is the orbit of infinity under this group; a real number is called a  {\em $G_k$-irrational} if it is not in this orbit.

 \begin{Thm}{\rm (Haas and Series)}\label{Haas+Series}
 For $x\in \R \setminus G_{k}(\infty)$, let
$$
\mu_k(x)=\inf \{ h\, ;\, \left| \, x - \frac{a}{c}\right| <
\frac{h}{c^2}\,\, \text{has infinitely many solutions
$\frac{a}{c}\in G_{k}(\infty)$} \} ,
$$
and set $ C(k)=\sup \{ \mu_k(x)\, ;\, x\in \R \setminus
 G_{k}(\infty)\}\,$. Then
$$
C(k)=\begin{cases} 1/2 & \text{if $k$ is even;}\\
\dfrac{\displaystyle 1}{\displaystyle 2\sqrt{\left(
1-\lambda_k/2\right)^2+1}} & \text{if $k$ is odd.}
\end{cases}
$$
\end{Thm}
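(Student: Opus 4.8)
The plan is to recover $C(k)$ from the arithmetic of the mediant-Rosen continued fraction, in the spirit of Ito's treatment of the classical case, and then to turn the resulting extremal problem into one about orbits of the natural extension of the mediant-Rosen map.

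Write $p_n/q_n$ for the complete sequence of mediant-Rosen convergents of a $G_k$-irrational $x$, and put $\Theta_n(x) = q_n^2\,|\,x - p_n/q_n\,|$. Each such convergent lies in $G_k(\infty)$, so for every $h > \liminf_n \Theta_n(x)$ the inequality $|x - a/c| < h/c^2$ has infinitely many solutions in $G_k(\infty)$; hence $\mu_k(x) \le \liminf_n \Theta_n(x)$ and therefore
$$
C(k) \;\le\; \gamma_k \;:=\; \sup_{x \notin G_k(\infty)} \,\liminf_n \Theta_n(x).
$$
For the reverse inequality I would invoke the equality of the Legendre and Lenstra constants (the principal result of the preceding sections): denoting that common value by $\mathcal L_k$, every rational $a/c$ with $c^2\,|x - a/c| < \mathcal L_k$ is necessarily one of the $p_n/q_n$. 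Provided $\gamma_k \le \mathcal L_k$ — this is the sense in which $C(k)$ turns out to be "sufficiently small", and it will be confirmed by the computation below — it follows that any $x$ with $\mu_k(x)$ close to $C(k)$ satisfies $\mu_k(x) = \liminf_n \Theta_n(x)$, whence $C(k) = \gamma_k$.

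It then remains to evaluate $\gamma_k$. The mediant-Rosen map $T_k$ has a natural extension $\mathcal T_k$ on an explicit planar domain $\Omega_k$, the second coordinate being essentially $q_{n-1}/q_n$, and there is a bounded function $\Phi_k$ on $\Omega_k$ with $\Theta_n(x) = \Phi_k\bigl(\mathcal T_k^{\,n}(x,0)\bigr)$ for all $n$. Consequently
$$
\gamma_k \;=\; \sup_{\bar x \in \Omega_k} \,\liminf_n \Phi_k\bigl(\mathcal T_k^{\,n}\bar x\bigr),
$$
and a standard compactness argument shows the supremum is attained; since $\Phi_k$ is piecewise monotone in each coordinate, the maximizing orbit turns out to be periodic. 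Thus $\gamma_k$ equals the largest value of $\min_{\bar y \in \mathcal O}\Phi_k(\bar y)$ as $\mathcal O$ runs over the periodic orbits of $\mathcal T_k$, and the task is reduced to exhibiting the extremal $\mathcal O$ and computing $\Phi_k$ along it.

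This last step is where the even/odd dichotomy of the Hecke groups appears and where the genuine work lies. For even $k$ the domain $\Omega_k$ and the set of admissible strings of mediant digits enjoy an extra symmetry absent when $k$ is odd; the extremal orbit is then a short cycle built from the smallest admissible digit, on which $\Phi_k$ is identically $1/2$, giving $C(k)=1/2$. For odd $k$ that symmetry fails: the admissibility conditions force the extremal orbit to be slightly longer, and carrying the two coordinates of its points through $\mathcal T_k$ — where the endpoint $\lambda_k/2$ of the fundamental interval intervenes — produces the factor $\sqrt{(1-\lambda_k/2)^2 + 1}$ in the denominator, i.e.\ $C(k) = \bigl(2\sqrt{(1-\lambda_k/2)^2+1}\bigr)^{-1}$. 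The main obstacle I anticipate is exactly the identification of the extremal periodic orbit for odd $k$ and the proof that no other admissible orbit keeps $\Phi_k$ larger; this needs a careful analysis of which mediant-digit strings occur together with the geometry of $\partial\Omega_k$, as well as the bookkeeping to confirm $\gamma_k \le \mathcal L_k$ at the extremal $x$ so that the Legendre step above applies there. A consistency check at $k=3$, where both formulas return $1/\sqrt5$, recovers the classical Hurwitz constant.
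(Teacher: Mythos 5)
Your overall framework is compatible with the paper's logic: you define $\gamma_k = \sup_x \liminf_n \Theta_n(x)$, use the Legendre/Lenstra equality to identify $\mu_k(x)$ with $\liminf_n \Theta_n(x)$ when the latter is below $\mathcal L_k$, and hence reduce Haas--Series to computing $\gamma_k$ on the planar natural extension. This is consistent with the paper, which proves Theorem \ref{Haas+Series} by combining the cited Borel result of \cite{KSS} (Theorem \ref{KSSBorel}), the inequality $\ell_k > C(k)$ (Theorem \ref{legLenEqual}), and the explicit witness $\tau_0 = R - \lambda$ (Theorem \ref{witness}).

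The gap is in the sentence ``a standard compactness argument shows the supremum is attained; since $\Phi_k$ is piecewise monotone in each coordinate, the maximizing orbit turns out to be periodic.'' This is not a routine step, and in fact it is the entire hard content of the Borel-type statement Theorem \ref{KSSBorel}, which the paper imports from \cite{KSS} rather than re-proving. The domain $\Omega^\ast$ is not compact (the fibres extend to $-\infty$), $\Phi_k(x,y)=1/(x-y)$ is not bounded on $\Omega^\ast$, and even on a compact invariant set the map $\bar x \mapsto \liminf_n \Phi_k(\hat S^n\bar x)$ is not upper-semicontinuous in general, so there is no soft reason the supremum is attained, let alone on a periodic orbit. (Compare the SCF case: that $\liminf_n \theta_n(x) \le 1/\sqrt{5}$ with equality precisely on the $\mathrm{GL}_2(\Z)$-orbit of the golden mean is Borel's theorem; it is not a compactness observation.) Even granting attainment and periodicity, singling out the extremal cycle among infinitely many periodic orbits is exactly what the explicit geometry of $\Omega_0$ and $\Omega^\ast$ and the specific orbit of $\tau_0$ accomplish in Section~\ref{sec:hurwitz}, and what \cite{KSS} establishes for the upper bound. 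To repair the proposal you would need to either (a) cite the Borel result for the Rosen convergents as the paper does, which already gives $\gamma_k \le C(k)$, and retain your witness computation for $\gamma_k \ge C(k)$; or (b) carry out a genuine Markov-spectrum-style analysis of the admissible digit strings and the boundary of $\Omega^\ast$ to prove the extremality of the candidate cycle, which is a substantial piece of work not sketched here.
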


A main goal of this paper is to give a new proof of this result using continued fraction methods and thus to complete Lehner's program.   (Since $k=3$ is the index of the classical case, we will restrict our considerations to $k\ge 4$ throughout.)  One could imagine that a continued fractions proof proceeds by way of analogs of Borel and Legendre results.   Indeed, ~\cite{KSS} gives a Borel-type result: 

 \begin{Thm}{\cite{KSS}}\label{KSSBorel}
 For $x\in \R \setminus G_{k}(\infty)$ and
$(p_n/q_n)_{n\geq 1}$ the corresponding Rosen convergents of
$x$, one has that
$$
\theta_n(x) := q_n^2 \left| \, x - \frac{p_n}{q_n}\, \right| <
C(k)\quad \text{infinitely often,}
$$
and the constant $C(k)$ is optimal.
\end{Thm}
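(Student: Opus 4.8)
The plan is to translate the Diophantine quantities $\theta_n(x)$ into the two-dimensional dynamics of the Rosen map. Let $T$ denote the Rosen continued fraction map for $G_k$, with digits $(\varepsilon_i,a_i)$ (signs $\varepsilon_i=\pm1$, partial quotients $a_i\ge1$) and convergents $p_n/q_n$. From the matrix identity for the convergents and the relation expressing $x$ through $T^n x$ and the $n$-th convergent matrix, both $\theta_{n-1}(x)$ and $\theta_n(x)$ are given by explicit rational functions $\Theta_-$ and $\Theta_+$ of the single point $\pi_n(x):=(\,T^n x,\ q_{n-1}/q_n\,)$. The sequence $(\pi_n(x))_n$ is precisely a forward orbit of the natural extension $\bar T$ of $T$ on a domain $\bar\Omega\subset\R^2$, determined in \cite{Na3}: $\bar T(t,v)=\bigl(Tt,\ 1/(a\lambda_k+\varepsilon v)\bigr)$, where $(\varepsilon,a)$ is the first Rosen digit of $t$. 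I would begin by writing down $\Theta_\pm$ and the exact shape of $\bar\Omega$, keeping track of the even/odd-$k$ distinction already visible in \cite{Na3}.

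Granting this dictionary, the assertion ``$\theta_n(x)<C(k)$ infinitely often'' is equivalent to ``the $\bar T$-orbit of $\pi_0(x)$ meets the region $R:=\{(t,v)\in\bar\Omega:\Theta_+(t,v)<C(k)\}$ infinitely often.'' Since $\bar T$ is ergodic for its explicit invariant measure and $R$ is a nonempty open set, this holds for Lebesgue-almost every $x$ immediately. The substance of the theorem is that there are no exceptions among $G_k$-irrationals, and here I would argue combinatorially: the complement $R^c=\{\Theta_+\ge C(k)\}$ is a thin region near a corner of $\bar\Omega$, and I would enumerate the Rosen digit blocks whose cylinders remain inside $R^c$ under all forward iterates. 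The claim is that this forces an eventually periodic expansion with a period drawn from a short explicit list (the digit string of the boundary arc of $\bar\Omega$ on which $\Theta_+$ attains its supremum), and each such $x$ can then be checked directly. The even/odd split of $k$ reappears here, matching the two cases of Theorem~\ref{Haas+Series}.

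Optimality of the constant I would obtain by exhibiting the extremal numbers: let $x_k$ be the attracting fixed point in $(0,\lambda_k/2)$ of the appropriate inverse branch of $T$ --- a quadratic surd whose Rosen expansion is purely periodic with the minimal period identified in the combinatorial step above (for instance the period $\overline{1\cdot\lambda_k}$ when $k$ is even). Its natural-extension orbit converges to the boundary point of $\bar\Omega$ where $\sup\Theta_+=C(k)$, so $\theta_n(x_k)\to C(k)$ and no smaller constant survives. These are exactly the $x$ with $\mu_k(x)=C(k)$ in Theorem~\ref{Haas+Series}, which is why the two optimal constants agree.

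I expect the main obstacle to be exactly the passage from ``almost every'' to ``every'' $G_k$-irrational. Since $T$ has infinitely many branches and a genuinely two-sided structure (the $\varepsilon_i=-1$ digits and the special branch adjacent to $\lambda_k/2$), the set $R^c$ is not forward invariant, yet one must still rule out any infinite orbit inside it; this needs a careful finite case analysis of admissible digit strings together with precise control of $\partial\bar\Omega$ as a function of $\lambda_k=2\cos\pi/k$. Pinning down the exact value $C(k)$ --- not merely some admissible constant --- from the geometry of $\bar\Omega$ is the quantitative crux of the proof.
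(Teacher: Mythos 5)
This theorem is quoted from \cite{KSS} rather than proved here; the machinery it rests on surfaces in Section~\ref{sec:hurwitz}, where the formulas $\theta_{n-1}(x) = v_n/(1+t_nv_n)$, $\theta_n(x)=|t_n|/(1+t_nv_n)$ with $(t_n,v_n)=\mathcal T^n(x,0)$ are recalled from \cite{BKS}, and the purely periodic orbit $(\tau_i,\eta_i)$ in the Rosen natural extension is used as the extremal witness. Your overall framework --- natural extension, $\theta_n$ expressed in orbit coordinates, optimality via a periodic orbit hugging the boundary of $\Omega$ --- is therefore the correct one and matches what \cite{KSS} does.

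However, the part you flag as the main obstacle is indeed the gap, and your proposed fix does not close it. Ergodicity gives ``infinitely often'' only for Lebesgue-a.e.\ $x$, and the upgrade to every $G_k$-irrational is the whole content of the Borel-type statement. Your mechanism --- enumerate cylinders that stay inside $R^c=\{\theta\ge C(k)\}$ forever and show this forces eventual periodicity, then check the few periodic candidates --- is not how this works, and in fact no orbit stays in $R^c$ forever: even the extremal $\tau_0$ has $\theta_n(\tau_0)<C(k)$ infinitely often, with a subsequence increasing to $C(k)$ from below. What \cite{KSS} actually proves is a deterministic, Tong-type statement: for \emph{every} $G_k$-irrational $x$ and every $n$, some $\theta_j$ in a window $n\le j\le n+m(k)$ of bounded length falls below $C(k)$, established by a finite case analysis of admissible Rosen digit blocks against the shape of $\Omega$ (the past digits control $v_n$, the future digits control $t_n$, and both matter because $R^c$ is a genuinely two-dimensional region). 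The Borel ``infinitely often'' statement for all $x$ drops out of that; no ergodic input is used in this direction.

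Two smaller points. The extremal number is misidentified: for even $k=2\ell$ it is $\tau_0=1-\lambda\in(-\lambda/2,0)$, with Rosen expansion $[\,\overline{(-1:1)^{\ell-2},\,(-1:2)}\,]$ of period length $\ell-1$; it is not an attracting fixed point of one inverse branch and is not in $(0,\lambda_k/2)$, and the period is not $\overline{1\cdot\lambda_k}$. The odd case is analogous but with period $2\ell$. Finally, you identify ``pinning down the exact $C(k)$'' as the quantitative crux; in the \cite{KSS} argument the value of $C(k)$ comes out automatically as the value of $\theta$ on the boundary arc of $\Omega$ traced by the periodic orbit, and the genuine crux is the finite case analysis producing the pointwise window statement, not the evaluation of the constant.
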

 \noindent
Furthermore, due to results of Nakada \cite{Na3}, a Legendre-result
exists for the Rosen fractions.   Unfortunately,  this analog of the Legendre constant is strictly {\em less}   than $C(k)$, so Theorem \ref{KSSBorel} does not immediately imply Theorem \ref{Haas+Series}.  That is, these two results do not rule out the existence of some constant $D$, with $D < C(k)$, such that for some $G_k$-irrational $x$ there exist
infinitely many $G_k$-rationals $a/c$, which are \emph{not} Rosen
convergents of $x$, but do satisfy
$$
\left| x - \frac{a}{c}\right| < \frac{D}{c^2}.
$$

To address this difficulty,  we introduce the mediant Rosen maps (see Section \ref{sec:definitions_and_fundrel}).         For each $k$, there is an 
 $\ell_{k} > 0$ such that
(i)  for any $G_{k}$-irrational $x$ and any finite $G_{k}$-rational $a/c$, 
\[
\left| x \, - \, \frac{a}{c} \right| \, < \, \frac{\ell_{k}}{c^2}
\]
implies $a/c$ is either a Rosen convergent $p_n/q_n$ for some $n \ge
0$, or a mediant Rosen convergent $u_{n,l}/v_{n, l}$ of $x$;  and,
(ii) for any $C > \ell_{k} $, there exist $x$ and $a/c$ such that
\[
\left| x \, - \, \frac{a}{c} \right| \, < \, \frac{C}{c^2}
\]
and $a/c$ is neither a Rosen convergent nor a mediant Rosen
convergent.  We call $\ell_{k} > 0$ the {\em Legendre constant} for
mediant Rosen convergents (of index $k$).

We turn to ergodic theory to determine the value of this Legendre constant of Diophantine approximation. 
In the setting of the simple continued fraction map,  H.W. ~Lenstra, ~Jr.,  conjectured the value  of the endpoint of linearity in the average value of small
approximation coefficients (for almost every $x$);  this conjecture was confirmed by Bosma {\em et al} \cite{BJW}.  
 In Section ~\ref{Legendre+Lenstra} we define an analogous value,    ${\mathcal L}_{k}$,  the {\em Lenstra constant}
for the mediant Rosen map (of index $k$).       (Haas \cite{H} has recently shown that Lenstra constants of continued fraction type maps  are related to universal behavior of geodesic
excursions into cusps of hyperbolic surfaces.)      Nakada \cite{Na4} has proved that whenever a continued fraction map
has a Legendre constant, then it also has a Lenstra constant, and if the map is ergodic with respect to a  {\em finite} invariant measure, then these two are equal.    (In the SCF case, the Legendre and Lenstra constants both equal $1/2\,$.)     We show that each Rosen mediant map is ergodic with respect to an {\em infinite} invariant measure and hence cannot directly invoke the result of \cite{Na4}.   Instead,  we use explicit planar extensions and the ratio ergodic theorem (see for example \cite{A})  to  prove the equality of the two constants and to determine their common value by actually evaluating the ergodic theoretic Lenstra constant.

 \begin{Thm}\label{legLenEqual}  Fix $k \ge 4$, and let $\ell_k$ and $\mathcal L_k$ be the Legendre and Lenstra constants for the Rosen mediant maps.  Furthermore,  when $k$ is even then  let $R$ equal 1, otherwise let $R$ be the positive root of $x^2 +(2- \lambda_k)x - 1$.   Then  for $k>4$
 \[ \ell_k = \mathcal L_k = \lambda-R\;.\]  
 Also,  $\ell_4 = \mathcal L_4 = \sqrt{2}/2$.    In particular,  in all cases we have 
 \[ \ell_k> C(k)\;.\]   
\end{Thm}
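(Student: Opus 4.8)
The plan is to establish the chain of equalities $\ell_k = \mathcal L_k = \lambda_k - R$ by proving the two equalities separately and then deriving the final inequality $\ell_k > C(k)$ by a direct comparison of the resulting closed-form expressions. For the ergodic-theoretic half, I would first construct the planar natural extension of each mediant Rosen map, following the template of \cite{Na3} and \cite{BKS}: exhibit an explicit domain $\Omega_k \subset \R^2$ on which the two-dimensional skew-product map acts bijectively (up to measure zero), with invariant density proportional to $(1+xy)^{-2}\,dx\,dy$ (or the appropriate analog dictated by the Möbius geometry). The novelty here is that, unlike the Rosen maps themselves, these mediant maps are ergodic only with respect to an \emph{infinite} invariant measure, so I cannot cite \cite{Na4} directly; instead I would invoke the Hopf ratio ergodic theorem (see \cite{A}) to compute the asymptotic average of the small approximation coefficients. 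Concretely, the Lenstra constant $\mathcal L_k$ is read off as the endpoint of the interval on which the function $t \mapsto \lim_N \frac{1}{N}\#\{ n \le N : \theta_n(x) \le t\}$ (suitably normalized by the ratio ergodic theorem) is linear; this endpoint is governed by the geometry of the cylinder sets near the "dangerous" corner of $\Omega_k$, and a careful bookkeeping of which pairs $(x, a/c)$ with $|x - a/c| < t/c^2$ arise as mediant convergents yields the value $\lambda_k - R$.

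For the arithmetic half, $\ell_k = \mathcal L_k$, I would adapt the argument of \cite{Na4} that relates the Legendre constant to the Lenstra constant, working around the infinite-measure obstruction. The key observation is that the \emph{definition} of the Legendre constant is purely combinatorial — it asks for the largest $\ell$ such that every sufficiently good $G_k$-rational approximation $a/c$ appears as a Rosen convergent or mediant Rosen convergent — and this can be translated into a statement about which regions of the planar extension's fibre over $x$ are "covered" by convergent data. One shows the inclusion $\ell_k \le \mathcal L_k$ by producing, for any $C > \mathcal L_k$, a non-convergent approximation witnessing failure (using the complement of the covered region), and the reverse inclusion $\ell_k \ge \mathcal L_k$ by showing that every approximation beating the threshold $\mathcal L_k/c^2$ must land in the covered region and hence be a (mediant) convergent, via the fundamental relations among the $p_n, q_n, u_{n,l}, v_{n,l}$ established in the earlier sections. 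The case $k = 4$ is special because $\lambda_4 = \sqrt{2}$ and the relevant quadratic degenerates, so I would handle it by a direct computation giving $\sqrt{2}/2$.

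The final inequality $\ell_k > C(k)$ is then a matter of comparing explicit algebraic numbers. For $k$ even, $R = 1$, so $\ell_k = \lambda_k - 1$, while $C(k) = 1/2$; since $\lambda_k = 2\cos(\pi/k) > 3/2$ for $k \ge 6$ (and $k=4$ is checked separately: $\sqrt2/2 > 1/2$), the inequality holds. For $k$ odd, $R$ is the positive root of $x^2 + (2-\lambda_k)x - 1$, so $R = \tfrac{1}{2}\big(\lambda_k - 2 + \sqrt{(2-\lambda_k)^2 + 4}\big)$ and $\ell_k = \lambda_k - R = \tfrac{1}{2}\big(\lambda_k + 2 - \sqrt{(2-\lambda_k)^2+4}\big)$; I would show this exceeds $C(k) = \big(2\sqrt{(1-\lambda_k/2)^2 + 1}\big)^{-1}$ by clearing denominators and reducing to a polynomial inequality in $\lambda_k$ on the relevant range $\lambda_k \in (\lambda_4, 2) = (\sqrt2, 2)$, which can be verified directly.

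I expect the main obstacle to be the infinite-measure ergodic analysis: setting up the planar natural extension so that the map is genuinely bijective (handling the boundary identifications and the parabolic/cusp behavior that produces the infinite measure), and then extracting a \emph{finite}, explicit value for the Lenstra constant from the ratio ergodic theorem despite the non-integrability of the invariant density. This requires identifying the correct normalizing sequence and controlling the contributions of the cylinders accumulating at the cusp; the bookkeeping that pins the threshold at exactly $\lambda_k - R$ (with $R$ the root of that specific quadratic) is where the real work lies, with the arithmetic equality $\ell_k = \mathcal L_k$ and the comparison with $C(k)$ being comparatively routine once the ergodic picture is in hand.
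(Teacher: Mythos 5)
Your high-level architecture matches the paper's: a planar natural extension for the mediant map carrying an infinite invariant measure of the form $(x-y)^{-2}\,dx\,dy$, the Hopf ratio ergodic theorem in place of the individual ergodic theorem (since \cite{Na4} cannot be cited verbatim), determination of $\mathcal L_k$ from the geometry of the region (the largest $t$ for which the slope-one line $y=x+t$ stays inside the domain), and a closed-form comparison against $C(k)$ at the end. Those portions of your plan are essentially what the paper does.

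The genuine gap is in your proposed proof of $\ell_k \ge \mathcal L_k$. You propose to show, for \emph{every} $x$ and every $a/c$ with $\Theta(x,a/c) < \mathcal L_k$, that $a/c$ must land in a ``covered region'' and hence be a mediant convergent, using the arithmetic of the $p_n, q_n, u_{n,l}, v_{n,l}$. That is in effect a full pointwise Legendre-type theorem for Rosen mediant fractions: the claim is universally quantified over $x$, while the natural extension only delivers statements for a.e.\ $x$, and you have not indicated how the arithmetic would carry the day. The paper circumvents this entirely by pivoting around $\ell_k$ rather than around $\mathcal L_k$: having first observed that $\ell_k$ \emph{exists} (because the Rosen-fraction Legendre constant exists by \cite{R-S}, and the mediant constant is no smaller), it proves the two inequalities abstractly, \emph{before} any evaluation. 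The direction $\ell_k \le \mathcal L_k$ is the easy one — for $t < \ell_k$ the mediant-counting function agrees with the full $G_k$-rational counting, which is linear by Theorem~\ref{thm:theorem 4}. The direction $\mathcal L_k \le \ell_k$ (Proposition~\ref{LenSmaller}) is the key maneuver you should adopt: starting from a single pair $(x_0, p/q)$ with $\Theta(x_0,p/q)<t$ and $p/q$ not a mediant convergent, one builds a cylinder set $\mathcal C$ of positive measure for the \emph{Rosen} map (finite invariant measure, so the ordinary ergodic theorem applies); visits of $T^n x$ to $\mathcal C$ occur with positive asymptotic density and each produces a non-mediant good approximation, creating a strict deficit $\limsup C(n,x,t)/C(n,x,t_0) \le t/t_0 - \delta < t/t_0$. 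This proves $\mathcal L_k \le \ell_k$ while never proving a pointwise Legendre theorem. Replacing your covered-region step with this density-deficit argument is the essential correction. A minor note: your reason that ``the relevant quadratic degenerates'' for $k=4$ is off — the quadratic $x^2+(2-\lambda)x-1$ enters only for odd $k$; the $k=4$ case is exceptional because the region is too short ($\ell=2$) to exhibit the corner pattern that determines the maximizing slope-one line for larger even $k$, so a different corner governs.
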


Therefore, any $G_k$-rational $a/c$ satisfying
$$
\left| \, x - \frac{a}{c}\right| < \frac{C(k)}{c^2},
$$
which is not a Rosen convergent \emph{must} be a mediant
convergent of $x$.    Our next result gives a {\em witness} to the optimality of $C(k)$;  we use $\Theta_n(x)$ to denote the coefficients of approximation for the Rosen median maps,  in direct analogy to the $\theta_n$ above (see Equation ~\eqref{approxCoeffs} below for a definition).

 \begin{Thm}\label{witness}  Fix $k \ge 4$.   Let  $C(k)$  and $R$ be as above.   Then $\tau_0 = R-\lambda$ is such that for any $C < C(k)$ 
\[
\Theta_n (\tau_0)<C,\quad \text{for at most finitely many}\; n\geq
0\,.
\]
 \end{Thm}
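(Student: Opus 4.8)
The plan is to exhibit $\tau_0 = R - \lambda$ as a specific $G_k$-irrational whose Rosen mediant expansion is eventually periodic with the "slowest possible" tail, and then to compute directly the limiting values of the approximation coefficients $\Theta_n(\tau_0)$ along the two relevant subsequences, showing that the smallest limit point equals $C(k)$. First I would identify the Rosen (and mediant Rosen) expansion of $\tau_0$. Since $R$ is characterised as the positive root of $x^2 + (2-\lambda_k)x - 1$ (for $k$ odd) or as $1$ (for $k$ even), the number $\tau_0 = R - \lambda$ should be a fixed point, or a preperiodic point, of an explicit Möbius element built from the generators $z \mapsto z + \lambda$ and $z \mapsto -1/z$; I would verify this by a short computation showing that the mediant Rosen map $T$ (or a suitable power of it, incorporating the "extremal" digit choices) sends $\tau_0$ to itself. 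This pins down the eventual periodicity of both the Rosen convergents $p_n/q_n$ and the mediant Rosen convergents $u_{n,l}/v_{n,l}$ of $\tau_0$.

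Next I would bring in the machinery behind Theorem \ref{legLenEqual}: the planar extension of the mediant Rosen map carries a natural second coordinate $y_n$ (the "dual" continued fraction), and the approximation coefficient $\Theta_n$ has the standard closed form in terms of the orbit point $T^n\tau_0$ and the dual coordinate $y_n$ — something of the shape $\Theta_n = \dfrac{1}{(\text{something involving } T^n x \text{ and } y_n)}$, exactly the formula referenced at Equation \eqref{approxCoeffs}. Because the orbit of $\tau_0$ under $T$ is eventually periodic, the pairs $(T^n\tau_0, y_n)$ converge (along each residue class of the period) to explicitly computable limit pairs, hence $\Theta_n(\tau_0)$ has finitely many limit points, each of which I can write in closed form in $\lambda_k$ and $R$. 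I would then check that the minimum of these limit points is exactly $C(k)$ — for $k$ even this should collapse to $1/2$, and for $k$ odd to $1/\bigl(2\sqrt{(1-\lambda_k/2)^2 + 1}\bigr)$, matching Theorem \ref{Haas+Series}. Given that, for any $C < C(k)$ the inequality $\Theta_n(\tau_0) < C$ can hold only for those finitely many $n$ before the orbit settles onto its periodic tail (plus possibly finitely many more from the convergence of each subsequence), which is the claim.

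The main obstacle I anticipate is the bookkeeping around the mediant convergents themselves: between two consecutive genuine Rosen convergents there is a block of mediant convergents $u_{n,l}/v_{n,l}$, and along this block the approximation coefficients vary in a controlled but nontrivial way (typically increasing then decreasing, or monotone, depending on the parity regime and on whether one is in the even or odd $k$ case). I would need to argue that the infimum of $\Theta$ over a full mediant block is attained at one of its two endpoints — i.e. at a point governed by a genuine convergent or by the extreme mediant — so that the limit-point computation above really does capture the smallest value. This is where the explicit fundamental relations of Section \ref{sec:definitions_and_fundrel} and the geometry of the planar natural extension domain must be invoked: one shows the relevant level sets of $\Theta$ intersect the fibre over the periodic orbit point in a way that forces the endpoint minimum. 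A secondary, more routine difficulty is simply ensuring the sign conventions and the normalisation of $v_{n,l}$ are consistent, so that the computed constant comes out as $C(k)$ rather than, say, $C(k)/\lambda_k^2$ or its reciprocal; I would pin this down by cross-checking against the already-established optimality half of Theorem \ref{KSSBorel}.
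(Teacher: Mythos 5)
Your outline matches the paper's strategy: use the purely periodic orbit of $(\tau_0,\eta_0)$ under the planar natural-extension map, express $\Theta_n$ as the reciprocal of a cross-term in the planar coordinates (Equations~(5)--(6) of the paper), note that along each residue class modulo the period the coefficients converge monotonically to computable limit values, and check that all limit values are $\geq C(k)$. That is exactly what the paper does, splitting into the even case ($\tau_0 = 1-\lambda$, $\mathcal T$-period $\ell-1$) and the odd case ($\tau_0 = R - \lambda$, period $2\ell$), and then accounting for the small number of extra orbit points that the mediant map $\hat S$ introduces beyond the induced Rosen map $\mathcal T$.

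One remark that would simplify, and in fact dissolve, the "main obstacle" you anticipate: the Rosen expansion of $\tau_0$ is known explicitly (in the even case $\tau_0 = [\,\overline{(-1:1)^{\ell-2},\,(-1:2)}\,]$, with an analogous digit string in the odd case taken from \cite{KSS}), and all the partial quotients $r_n$ are $1$ or $2$. Consequently between any two successive Rosen convergents of $\tau_0$ there is \emph{at most one} mediant convergent, so there is no nontrivial mediant block to minimize over. The paper exploits this: the only extra $\hat S$-orbit points are one per period (even case, at the point $(1,-1)$) or two per period (odd case), and their approximation coefficients are evaluated directly --- e.g.\ $\Theta(1,y) = 1/(1-y) > 1/2$ for $-1 < y < 0$ --- no level-set geometry of $\Theta$ on a fibre is required. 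Your worry about endpoint minimization over long mediant blocks would be a real issue for a generic point, but it does not arise for the chosen witness.
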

We note that Theorems \ref{KSSBorel}, \ref{legLenEqual}  and \ref{witness}  give a continued fraction proof of Theorem \ref{Haas+Series}.

\subsection{Outline} In the next section, we introduce the mediant algorithm.  In Section ~\ref{sec:definitions_and_fundrel}, we give the underlying mediant maps.  Section ~\ref{sec:NatExt} is devoted to the construction of planar natural extensions for these, and to the study of their basic ergodic properties.    Section ~\ref{sec:hurwitz} provides the proof of Theorem \ref{witness}.  Definitions of the Legendre and Lenstra constants for the mediant maps appear in Section ~\ref{Legendre+Lenstra}, where their equality  is proven.    In Section ~\ref{the-Lenstra-constant}, we  evaluate the Lenstra constants.

\subsection{Thanks}  We thank the referee for a careful reading and sound advice for improving the presentation of these results:    in particular for insisting on clarifying material which lead us to include Subsection ~\ref{fritzToRescue}.

\section{Mediants of Rosen Fractions}\label{sec:Med}

\begin{figure}
\begin{center}
\scalebox{0.8}{\includegraphics{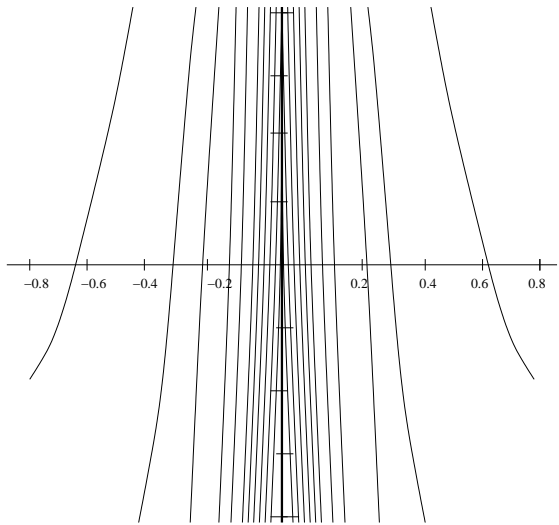}}
\caption{Graph of $T_k(x)$, k=5}
\label{thatsTfig}
\end{center}
\end{figure}

Throughout this paper, $\lambda_k = 2 \cos \frac{\pi}{k}$ and
${\mathbb I}_k = [-\lambda/2, \lambda/2\,)$ for $k
\ge 3$. For a fixed integer $k \ge 3$, the Rosen continued fraction
map is defined by
\[
T_{k}(x) = \begin{cases} \left| \frac{1}{x} \right| \, - \,
\lambda \lfloor \left| \,\frac{1}{\lambda x} \right| + \frac{1}{2}
\rfloor & x \ne 0; \\
  0   &  x = 0
\end{cases}
\]
for $x \in {\mathbb I}_k$; here and below, we  omit the index
``$k$" whenever it is clear from context.  We define
\[
\ve_n(x) = \mbox{sgn}(T^{n-1}x) \qquad \mbox{and} \qquad
r_n(x) = r(T^{n-1}x)
\]
with
\[
\ve (x) = \mbox{sgn}(x) \qquad \mbox{and} \qquad
r(x) = \left\lfloor \, \left| \frac{1}{\lambda x} \right| + \frac{1}{2}
\right\rfloor .
\]
Then, as Rosen showed in \cite{R},  we have the Rosen continued fraction expansion of $x$ as follows
\[
x \, = \, \confrac{\ve_1(x)}{ \lambda r_1(x)} \, + \,
          \confrac{\ve_2(x)}{ \lambda r_2(x)} \, + \,
          \cdots +\confrac{\ve_n(x)}{ \lambda r_n(x)} \, + \, \cdots \,
          ,
\]
which is denoted by $x=[\, \ve_1(x):r_1(x),\,
\ve_2(x):r_2(x),\ldots ,\, \ve_n(x):r_n(x),\ldots ]\,$. Here the
expansion terminates at a finite term if and only if $x$ is a
parabolic point of  $G_k$, thus if and only if $x$ is a $G_k$-rational
(see~\cite{R}).  As usual we can define the convergents $p_n/q_n$
of $x\in {\mathbb I}_k$ by
\[
\begin{pmatrix}
p_{-1}  &  p_0  \\
q_{-1}  &  q_1
\end{pmatrix}
\, = \,
\begin{pmatrix}
1  &  0  \\
0  &  1
\end{pmatrix}
\]
and
\[
\begin{pmatrix}
p_{n-1}  &  p_{n}  \\
q_{n-1}  &  q_{n}
\end{pmatrix}
\, = \,
\begin{pmatrix}
0 &  \ve_{1}  \\
1  &  \lambda r_{1}
\end{pmatrix}
\begin{pmatrix}
0 &  \ve_{2}  \\
1  &  \lambda r_{2}
\end{pmatrix}
\cdots
\begin{pmatrix}
0 &  \ve_{n}  \\
1  &  \lambda r_{n}
\end{pmatrix}
\]
for $n \ge 1$. From this definition it is easy to see that $\left|
p_{n-1}q_{n} \, - \, q_{n-1}p_n \right| =1$, and that we have the
well-known recurrence relations
\begin{eqnarray*}
p_{-1}=1; p_0=0; & p_n=\lambda r_np_{n-1}+\varepsilon_np_{n-2},\,
n\geq 1\\
q_{-1}=0; q_0=1; & q_n=\lambda r_nq_{n-1}+\varepsilon_nq_{n-2},\,
n\geq 1.
\end{eqnarray*}
It also follows that
\[
\begin{pmatrix}
p_{n-1}  &  q_{n-1}  \\
p_{n}  &  q_{n}
\end{pmatrix}
\, = \,
\begin{pmatrix}
0 &  1  \\
\ve_{n}  &  \lambda r_{n}
\end{pmatrix}
\begin{pmatrix}
0 &  1  \\
\ve_{n-1}  &  \lambda r_{n-1}
\end{pmatrix}
\cdots
\begin{pmatrix}
0 &  1  \\
\ve_{1}  &  \lambda r_{1}
\end{pmatrix} ,
\]
giving
\begin{equation}
\frac{p_n}{q_n} \, = \,
\confrac{\ve_1}{ \lambda r_1} \, + \,
          \confrac{\ve_2}{ \lambda r_2} \, + \,
          \cdots +\confrac{\ve_n}{ \lambda r_n}
\end{equation}
and
\begin{equation}
\frac{q_{n-1}}{q_n} \, = \,
\confrac{1}{ \lambda r_n} \, + \,
          \confrac{\ve_{n}}{ \lambda r_{n-1}} \, + \,
          \cdots + \confrac{\ve_{2}}{ \lambda r_{1}} .
\end{equation}
Since Hecke groups are discontinuous groups, the (parabolic) value
$ p_n/q_n$  uniquely determines $q_{n}$ up to sign; we can and do
assume $q_{n}$ to be positive.

From the definition of $p_n/q_n$, we have
\[
\frac{p_{n+1}}{q_{n+1}} \, = \, \frac{r_{n+1} \lambda
p_n+\varepsilon_{n+1}p_{n-1}} {r_{n+1}\lambda
q_n+\varepsilon_{n+1}q_{n-1}} .
\]
If $r_{n+1} > 1$, then---following~\cite{I} in the SCF-case---we
can interpolate between $p_n/q_n$ and $p_{n+1}/q_{n+1}$ by
\begin{equation}\label{eq:vees}
\frac{u_{n, l}}{v_{n,l}}=\frac{l \lambda
p_n+\varepsilon_{n+1}p_{n-1}}{l \lambda q_n+
\varepsilon_{n+1}q_{n-1}} , \quad 1 \le l < r_{n+1},
\end{equation}
and call $u_{n, l}/v_{n, l}$ the $l$-th mediant convergent of $x$
(of level $n$); note that such a mediant does not exist in case
$r_{n+1}=1$. In the next section, we define a map which induces
mediant convergents of Rosen continued fractions.

\section{Mediant Maps and Convergents}\label{sec:definitions_and_fundrel}
\subsection{Full Mediant Map}
For each fixed $k$,  put ${\mathbb J} = {\mathbb J}_k = [ - \lambda/2, \,
\frac{2}{\lambda} )$.

For ease of notation,  we  let $\begin{pmatrix} a & b \\ c & d
\end{pmatrix} (x)$ denote  $\displaystyle{\frac{ax + b}{cx + d}}$.    This allows us to show a ``factorization'' of the map $T_k$, leading to our definition of the Rosen mediant maps.  

  The following is trivially verified.
\begin{Lem} The Rosen map $T_k$ can be expressed in the following
manner.
\[
T_k(x)=\left\{ \begin{array}{cl}
         \begin{pmatrix}
         \lambda& 1  \\
         -1     & 0
         \end{pmatrix}
   (x), &  x\in [-\frac{\lambda}{2}, -\frac{2}{3 \lambda}); \\
& \\
         \begin{pmatrix}
         t \lambda& 1  \\
         -1     & 0
         \end{pmatrix}
   (x), &  x\in [-\frac{2}{(2t-1)\lambda},
                     - \frac{2}{(2t+1)\lambda}),\, t\in \N_{\geq 2}; \\
& \\
         \begin{pmatrix}
         -t \lambda& 1  \\
         1     & 0
         \end{pmatrix}
   (x), &  x \in (\frac{2}{(2t+1)\lambda},
                      \frac{2}{(2t-1)\lambda}],\, t\in \N_{\geq 2};\\
& \\
         \begin{pmatrix}
         -\lambda& 1  \\
         1     & 0
         \end{pmatrix}
   (x), &  x\in (\frac{2}{3 \lambda},
   \frac{\lambda}{2}).
\end{array}\right.
\]
\end{Lem}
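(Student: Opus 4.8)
The plan is to read the claimed formula straight off the definition of $T_k$, using the single observation that for $x\ne 0$,
\[
T_k(x) \;=\; \frac{\varepsilon(x)}{x}\,-\,\lambda\, r(x) \;=\; \begin{pmatrix} -\lambda\, r(x) & \varepsilon(x) \\ 1 & 0 \end{pmatrix}(x),
\]
since $\left|1/x\right| = \varepsilon(x)/x$ with $\varepsilon(x) = \operatorname{sgn}(x)$. Thus everything reduces to identifying the level sets $\{x\in\mathbb I_k : r(x) = t\}$ and checking that the resulting matrix agrees, possibly up to the harmless overall factor $-I$, with the one in the statement.

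First I would take $x \in (0,\lambda/2)$, so that $\left|1/(\lambda x)\right| = 1/(\lambda x)$ and hence $r(x) = t$ precisely when $t - \tfrac12 \le \tfrac{1}{\lambda x} < t + \tfrac12$. Because $x>0$, this is equivalent to $\tfrac{2}{(2t+1)\lambda} < x \le \tfrac{2}{(2t-1)\lambda}$. Intersecting with $(0,\lambda/2)$ and using $\lambda/2 < 2/\lambda$ (which holds since $\lambda = 2\cos(\pi/k) < 2$), the value $t=1$ is attained exactly on $(\tfrac{2}{3\lambda}, \tfrac{\lambda}{2})$, while for each $t \ge 2$ one has $r\equiv t$ on $(\tfrac{2}{(2t+1)\lambda}, \tfrac{2}{(2t-1)\lambda}]$. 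On all of these the matrix above is $\begin{pmatrix} -\lambda t & 1 \\ 1 & 0\end{pmatrix}$, which is the third and fourth lines of the claim.

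Next I would treat $x \in [-\lambda/2, 0)$, where $\left|1/(\lambda x)\right| = -1/(\lambda x)$, so that $r(x) = t$ means $t - \tfrac12 \le -\tfrac{1}{\lambda x} < t + \tfrac12$; clearing the negative denominator $\lambda x$, which reverses the inequalities, gives $-\tfrac{2}{(2t-1)\lambda} \le x < -\tfrac{2}{(2t+1)\lambda}$. Intersecting with $[-\lambda/2,0)$ yields $[-\tfrac{\lambda}{2}, -\tfrac{2}{3\lambda})$ for $t=1$ and $[-\tfrac{2}{(2t-1)\lambda}, -\tfrac{2}{(2t+1)\lambda})$ for $t \ge 2$; here $\varepsilon(x)=-1$, so the matrix is $\begin{pmatrix} -\lambda t & -1 \\ 1 & 0\end{pmatrix} = -\begin{pmatrix} \lambda t & 1 \\ -1 & 0\end{pmatrix}$, the first and second lines. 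Since the listed intervals partition $\mathbb I_k \setminus\{0\}$, the verification is then complete.

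The only thing to watch is elementary bookkeeping: the direction of the inequalities when multiplying through by $\lambda x < 0$, and the inheritance of the half-open endpoint conventions from the floor function in $r(\cdot)$ together with the cut-off imposed by the domain $\mathbb I_k$ on the $t=1$ pieces. There is no genuine obstacle, which is exactly why the lemma is asserted to be trivially verified; presenting it this way merely makes the bookkeeping explicit and records the Möbius representatives in the normalization used for the mediant maps in the next section.
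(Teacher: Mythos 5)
Your verification is correct, and it is exactly the direct check the paper declares ``trivially verified'' without writing out: express $T_k$ as the M\"obius action of $\begin{pmatrix} -\lambda r(x) & \varepsilon(x) \\ 1 & 0 \end{pmatrix}$, identify the level sets of $r$ on the two half-intervals, and observe that the matrices listed in the lemma on the negative side differ from yours only by the scalar $-I$, which has no effect on the fractional-linear action. One small point worth making explicit: when you intersect the $t=1$ level set with $\mathbb I_k$ to get $(\tfrac{2}{3\lambda},\tfrac{\lambda}{2})$ on the right and $[-\tfrac{\lambda}{2},-\tfrac{2}{3\lambda})$ on the left, you are using $\tfrac{2}{3\lambda}<\tfrac{\lambda}{2}$, i.e.\ $\lambda>2/\sqrt{3}$, which holds for $k\ge 4$ (the paper's standing assumption from the introduction onward) but fails at $k=3$ where $\lambda_3=1$; so the partition as written in the lemma is really the $k\ge 4$ statement, and your proof silently relies on that restriction.
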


\bigskip
\begin{Def}\label{def:mats}  We define the following
matrices.
\[
U_{-} \, = \,  \begin{pmatrix}
         0 & -1 \\
         1 & \lambda         \end{pmatrix}
         \;; \quad U_{+} \, = \,
         \begin{pmatrix}
         0 & 1 \\
         1 & \lambda         \end{pmatrix}
         \;; \quad V_{-} \, = \,
         \begin{pmatrix}
         -1 & 0 \\
         \lambda& 1
         \end{pmatrix}
          \;; \quad V_{+} \, = \,
         \begin{pmatrix}
         1 & 0 \\
         \lambda& 1
         \end{pmatrix}
          \;.
\]
\end{Def}

The inverses of these are of course:
\[
U_{-}^{-1} \, = \,
         \begin{pmatrix}
         \lambda& 1 \\
         -1 & 0
         \end{pmatrix}
         \;; \quad U_{+}^{-1} \, = \,
         \begin{pmatrix}
         -\lambda& 1 \\
         1 & 0
         \end{pmatrix}
         \;; \quad V_{-}^{-1} \, = \,
         \begin{pmatrix}
         -1 & 0 \\
         \lambda& 1
         \end{pmatrix} ,
\]
and
\[
         V_{+}^{-1} \, = \,
         \begin{pmatrix}
         1 & 0 \\
         -\lambda& 1
         \end{pmatrix}
         \;.
\]

\medskip\
The next lemma is verified by direct computation.

\begin{Lem}\label{trivMatCalc}   The following
equalities hold:
\[
         \begin{pmatrix}
         t \lambda& 1 \\
         -1 & 0
         \end{pmatrix}
  \, = \, U_{+}^{-1} \cdot V_{+}^{-(t-2)} \cdot V_{-}^{-1} \; \;
  \text{and} \;\;
         \begin{pmatrix}
         -t \lambda& 1 \\
         1 & 0
         \end{pmatrix}
  \, = \, U_{+}^{-1} \cdot V_{+}^{-(t-1)}\;.
\]
\end{Lem}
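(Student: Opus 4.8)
The plan is to reduce the entire verification to the single structural fact that $V_{+}$ is unipotent. Writing $V_{+}^{-1} = \begin{pmatrix} 1 & 0 \\ -\lambda & 1 \end{pmatrix}$, an immediate induction on $m \ge 0$ gives
\[
V_{+}^{-m} \, = \, \begin{pmatrix} 1 & 0 \\ -m\lambda & 1 \end{pmatrix}.
\]
Everything after this is a pair of explicit $2 \times 2$ products, so this is the only step that is not pure bookkeeping.

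For the second identity I would substitute $m = t-1$ and compute
\[
U_{+}^{-1} \cdot V_{+}^{-(t-1)} \, = \, \begin{pmatrix} -\lambda & 1 \\ 1 & 0 \end{pmatrix} \begin{pmatrix} 1 & 0 \\ -(t-1)\lambda & 1 \end{pmatrix};
\]
the off-diagonal and bottom entries are already fixed, and the $(1,1)$ entry collapses to $-\lambda - (t-1)\lambda = -t\lambda$, giving exactly $\begin{pmatrix} -t\lambda & 1 \\ 1 & 0 \end{pmatrix}$. For the first identity I would first form $U_{+}^{-1} \cdot V_{+}^{-(t-2)} = \begin{pmatrix} -(t-1)\lambda & 1 \\ 1 & 0 \end{pmatrix}$ by the same collapse with $m = t-2$, and then right-multiply by $V_{-}^{-1} = \begin{pmatrix} -1 & 0 \\ \lambda & 1 \end{pmatrix}$. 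The sign in the first column of $V_{-}^{-1}$ turns $-(t-1)\lambda$ into $+(t-1)\lambda$ and the second column adds a further $+\lambda$, so the $(1,1)$ entry becomes $t\lambda$ while the bottom row becomes $(-1,0)$; this is the asserted matrix $\begin{pmatrix} t\lambda & 1 \\ -1 & 0 \end{pmatrix}$.

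The one point that deserves a word is the range of $t$: in the factorization of $T_k$ these matrices arise only for $t \in \N_{\ge 2}$, so $t-2 \ge 0$ and $t-1 \ge 1$, and the closed form for $V_{+}^{-m}$ applies directly; when $t = 2$ the middle factor in the first identity is the identity matrix and the computation degenerates correctly. Since every step is a concrete matrix multiplication, I do not expect a genuine obstacle here — the lemma holds, as stated, by direct computation, and the argument above merely records that arithmetic.
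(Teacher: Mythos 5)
Your computation is correct, and it is exactly the ``direct computation'' the paper alludes to without writing out: the closed form $V_{+}^{-m} = \begin{pmatrix} 1 & 0 \\ -m\lambda & 1 \end{pmatrix}$ follows immediately from unipotence, and the two subsequent $2\times 2$ products check out as you record them. Nothing to add.
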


\medskip

\begin{Def}\label{def:RosMed}   The {\em Rosen mediant map} is 
 
\[
S_k(x)=\begin{cases}
U_{-}^{-1}(x), & x \in [-\frac{\lambda}{2}, -\frac{2}{3\lambda})\;; \\
\\
V_{-}^{-1}(x), & x \in [-\frac{2}{3\lambda}, 0)\;; \\
\\
V_{+}^{-1}(x), & x \in (0, \frac{2}{3\lambda}]\;; \\
\\
U_{+}^{-1}(x), & x \in (\frac{2}{3\lambda}, \frac{2}{\lambda}) \;.
\end{cases}
\]
\end{Def}

In the case of $k=3$, $T_3$ is the classical nearest integer
continued fraction map, and $S_3$ is the mediant map as defined by
R.~Natsui in~\cite{Nt}. In the sequel we always assume that $k \ge
4$.    Here also, we suppress the index $k$ when discussing these maps.

\begin{figure}
\begin{center}
\includegraphics{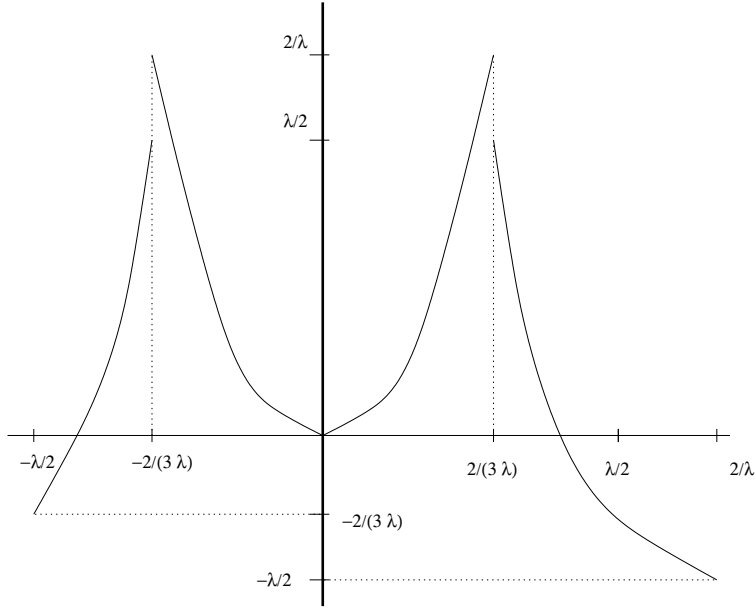}
\caption{Graph of $S_k(x)$, k=5}
\label{interMedMapFig}
\end{center}
\end{figure}
\medskip\
Direct calculation also shows the following.
\begin{Lem} Viewed as a linear fractional transformation, the matrix
$V_{-}^{-1}$ is a monotonic  decreasing  bijective map from $\left( -\frac{2}{(2l -1)
\lambda}, -\frac{2}{(2l +1) \lambda}\right)$ to $\left( \frac{2}{(2l
-1) \lambda}, \frac{2}{(2l -3) \lambda}\right)$.
Similarly, the linear fractional transformation $V_{+}^{-1}$   is a monotonic increasing bijective map from  $\left( \frac{2}{(2l +1) \lambda},
\frac{2}{(2l -1) \lambda}\right)$ to $\left(
\frac{2}{(2l -1) \lambda}, \frac{2}{(2l -3) \lambda}\right)$.
\end{Lem}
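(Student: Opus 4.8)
The plan is to treat $V_{-}^{-1}$ and $V_{+}^{-1}$ as explicit real M\"obius transformations and to verify each assertion by a derivative computation together with an evaluation at the endpoints of the given interval. Throughout I use that $l \ge 2$, which is the only range relevant to the mediant map: the intervals in the statement are precisely the pieces into which the definition of $S_k$ cuts $[-\tfrac{2}{3\lambda},0)$ and $(0,\tfrac{2}{3\lambda}]$, so there is no $l=1$ case.

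First I would record the two maps from Definition~\ref{def:mats} in coordinates, namely $V_{-}^{-1}(x) = -x/(\lambda x + 1)$ and $V_{+}^{-1}(x) = x/(-\lambda x + 1)$. Differentiating gives $\bigl(V_{-}^{-1}\bigr)'(x) = -1/(\lambda x + 1)^2 < 0$ and $\bigl(V_{+}^{-1}\bigr)'(x) = 1/(-\lambda x + 1)^2 > 0$, valid away from the respective poles $x = -1/\lambda$ and $x = 1/\lambda$. Hence on any open interval not containing its pole, $V_{-}^{-1}$ is strictly decreasing and $V_{+}^{-1}$ is strictly increasing. Because $l \ge 2$, the interval $\bigl(-\tfrac{2}{(2l-1)\lambda}, -\tfrac{2}{(2l+1)\lambda}\bigr)$ is contained in $[-\tfrac{2}{3\lambda},0)$ and so misses $-1/\lambda = -\tfrac{2}{2\lambda}$; symmetrically, $\bigl(\tfrac{2}{(2l+1)\lambda}, \tfrac{2}{(2l-1)\lambda}\bigr) \subset (0,\tfrac{2}{3\lambda}]$ misses $1/\lambda$. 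Since a continuous strictly monotone function on an open interval is a bijection onto the open interval bounded by the one-sided limits of its values at the endpoints, it only remains to compute those endpoint values.

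Finally I would carry out the endpoint substitutions. For $V_{-}^{-1}$ one gets $V_{-}^{-1}\bigl(-\tfrac{2}{(2l+1)\lambda}\bigr) = \tfrac{2}{(2l-1)\lambda}$ and $V_{-}^{-1}\bigl(-\tfrac{2}{(2l-1)\lambda}\bigr) = \tfrac{2}{(2l-3)\lambda}$; since $V_{-}^{-1}$ is decreasing, the former is the left and the latter the right endpoint of the image, which is exactly the claimed target interval. For $V_{+}^{-1}$ one gets $V_{+}^{-1}\bigl(\tfrac{2}{(2l+1)\lambda}\bigr) = \tfrac{2}{(2l-1)\lambda}$ and $V_{+}^{-1}\bigl(\tfrac{2}{(2l-1)\lambda}\bigr) = \tfrac{2}{(2l-3)\lambda}$; since $V_{+}^{-1}$ is increasing, these are again the left and right endpoints of the image. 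I do not expect a genuine obstacle here; the only point deserving a word of care is the verification that the relevant pole does not lie in the domain interval, which is immediate once one recalls $l \ge 2$, and the endpoint arithmetic is elementary.
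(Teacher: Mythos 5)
Your proof is correct and is essentially the direct calculation that the paper invokes without writing it out (the paper introduces the lemma with only the phrase that it follows by direct calculation). Your explicit identification of $V_{-}^{-1}(x) = -x/(\lambda x + 1)$ and $V_{+}^{-1}(x) = x/(1-\lambda x)$, the derivative signs, the endpoint evaluations, and the remark that $l\ge 2$ keeps the poles $\mp 1/\lambda$ outside the domain intervals are exactly the checks one needs.
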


\bigskip
Finally, we have that $T(x)$ is induced by $S(x)$.  
\begin{Lem}\label{trueMediant}   For each $x \in {\mathbb I}_k$,
let $\ell (x)$ be defined as follows.
\[
\ell (x) := \min \left\{ \ell \ge 0 :\, S^{\ell}(x) \in
[-\frac{\lambda}{2}, -\frac{2}{3\lambda})
             \cup (\frac{2}{3\lambda}, \frac{2}{\lambda}) \right\}\;.
\]
Then for each $x\in {\mathbb I}_k$, one has the following
equality:
\[
S^{\ell (x) + 1} (x)=T(x).
\]
\end{Lem}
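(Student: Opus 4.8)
The plan is to unwind the definition of $S_k$ on the four pieces of its domain and compare with the piecewise description of $T_k$ given in the lemma preceding Definition \ref{def:mats}, using the matrix factorizations of Lemma \ref{trivMatCalc}. First I would observe the key structural fact: on the two ``outer'' intervals $[-\lambda/2, -2/(3\lambda))$ and $(2/(3\lambda), 2/\lambda)$ the map $S_k$ is given by $U_-^{-1}$ respectively $U_+^{-1}$, so the stopping rule in the definition of $\ell(x)$ selects exactly the first time the $S_k$-orbit lands in one of these two intervals; on the complementary ``inner'' intervals $[-2/(3\lambda),0)$ and $(0,2/(3\lambda)]$ the map $S_k$ acts by $V_-^{-1}$ respectively $V_+^{-1}$. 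By the monotonic-bijection lemma for $V_\pm^{-1}$, each application of $V_-^{-1}$ (resp.\ $V_+^{-1}$) on the subinterval corresponding to index $l$ maps onto the subinterval corresponding to index $l-1$; hence iterating $V_\pm^{-1}$ strictly decreases the ``index'', and after finitely many steps the orbit must leave the inner region — this shows $\ell(x)<\infty$ and identifies how many times the relevant $V_\pm^{-1}$ is applied before the $U_\pm^{-1}$ step.

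Next I would treat the four cases of the value $x$ lies in according to the piecewise formula for $T_k$. If $x\in[-\lambda/2,-2/(3\lambda))$ then $\ell(x)=0$ and $S_k(x)=U_-^{-1}(x)=\begin{pmatrix}\lambda&1\\-1&0\end{pmatrix}(x)=T_k(x)$ directly, and similarly if $x\in(2/(3\lambda),\lambda/2)$ — note this last interval sits inside $(2/(3\lambda),2/\lambda)$ — where $S_k(x)=U_+^{-1}(x)=\begin{pmatrix}-\lambda&1\\1&0\end{pmatrix}(x)=T_k(x)$. For $x\in[-2/((2t-1)\lambda),-2/((2t+1)\lambda))$ with $t\ge 2$, repeated application of $V_-^{-1}$ walks the index down from $t$ to $2$, i.e.\ $S_k^{\,t-2}(x)=(V_-^{-1})^{t-2}(x)$ still lies in an inner interval, then one further $V_-^{-1}$ lands in an outer interval, after which the next step applies $U_-^{-1}$ — wait, more carefully: the composition realized is $U_+^{-1}\cdot V_+^{-(t-2)}\cdot V_-^{-1}$ from Lemma \ref{trivMatCalc}, and one checks the orbit passes through $0$ and into the positive inner intervals, so that $\ell(x)=t-1$ and $S_k^{\ell(x)+1}(x)=U_+^{-1}V_+^{-(t-2)}V_-^{-1}(x)=\begin{pmatrix}t\lambda&1\\-1&0\end{pmatrix}(x)=T_k(x)$. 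The symmetric case $x\in(2/((2t+1)\lambda),2/((2t-1)\lambda)]$, $t\ge 2$, uses $\begin{pmatrix}-t\lambda&1\\1&0\end{pmatrix}=U_+^{-1}V_+^{-(t-1)}$, giving $\ell(x)=t-1$ and the same conclusion.

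The main obstacle I anticipate is bookkeeping rather than anything conceptual: one must verify that the intermediate iterates $S_k^{j}(x)$ for $0\le j\le \ell(x)$ genuinely lie in the inner intervals (so that $S_k$ is being applied by the intended $V_\pm^{-1}$ branch at each step and the minimality in the definition of $\ell(x)$ is respected), and in particular that the sign changes — the crossing of the orbit from negative to positive inner intervals — occur exactly where the factorizations $U_+^{-1}V_+^{-(t-2)}V_-^{-1}$ predict. This requires tracking the image intervals under each $V_\pm^{-1}$ using the monotonic-bijection lemma, checking endpoint behaviour (the half-open conventions on ${\mathbb I}_k$, ${\mathbb J}_k$, and the subintervals), and confirming that the final $U_\pm^{-1}$ step reproduces the matrix appearing in the piecewise formula for $T_k$. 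All of these are finite, explicit linear-fractional computations of the type the paper has already labeled ``trivially verified'' and ``verified by direct computation,'' so once the interval-chasing is organized per case, the identity $S_k^{\ell(x)+1}(x)=T_k(x)$ drops out by matching matrices.
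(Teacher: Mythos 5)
Your proposal is correct and is essentially a careful expansion of the paper's one-line proof (``This follows from our definitions and of Lemmas of this section''): you unwind the piecewise definitions, track the orbit through the inner intervals using the $V_\pm^{-1}$ monotonic-bijection lemma, and match the resulting matrix composition with the factorizations of Lemma \ref{trivMatCalc}. One minor wording slip: in the case $x \in [-2/((2t-1)\lambda), -2/((2t+1)\lambda))$ the orbit does not ``pass through $0$'' --- the single application of $V_-^{-1}$ sends the negative point directly to a positive one (since $V_-^{-1}(x) = -x/(\lambda x+1) > 0$ for $x \in [-2/(3\lambda),0)$), and thereafter only $V_+^{-1}$ is applied; the index bookkeeping you give ($\ell(x) = t-1$ in both inner cases) is nonetheless correct.
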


\begin{proof} This follows from our definitions and of Lemmas of this section. \end{proof}
\bigskip
\subsection{Mediant Convergents}
For $x\in {\mathbb I}_k$ and $i \in \mathbb N$,  we let
\[
M_{i}=\left\{ \begin{array}{ccl}
U_{-} & \mbox{if} & S^{i-1}(x) \in [-\frac{\lambda}{2}, -\frac{2}{3\lambda}); \\
\\
U_{+} & \mbox{if} & S^{i-1}(x) \in (\frac{2}{3\lambda}, \frac{2}{\lambda});\\
\\
V_{-} & \mbox{if} & S^{i-1}(x) \in [-\frac{2}{3\lambda}, 0);\\
\\
V_{+} & \mbox{if} & S^{i-1}(x) \in (0, \frac{2}{3\lambda}];\\
\\
\text{Id}   & \mbox{if} & S^{i-1}(x) \, = \, 0.
\end{array} \right.
\]
for $i \ge 1$, where as usual $\text{Id}$ denotes the identity.

\bigskip

Then we have a sequence of matrices from $x$ which is denoted by
\begin{equation}\label{mSeq}
x \, \sim \, M_1 M_2 \cdots M_j \cdots
\end{equation}
Of course, the mediant map acts as a shift on each such sequence.

\begin{Def}  Fix $x\in {\mathbb I}_k$ and consider the above
sequence of $M_i$.     Let $k_1, k_2, \dots$ be the increasing sequence of
indices for which $M_{k_i}\in \{\,U_- \, , \, U_+\,\}$.
\end{Def}

The following lemma records the fact that the sequence of Rosen convergents
$p_n/q_n$ of $x\in {\mathbb I}_k$ is a subsequence of the sequence
$u_{n, l}/v_{n,vl}$, $n\geq 1$, of mediant convergents of $x$.
\begin{Lem} For each $x \in {\mathbb I}_k$, consider the
corresponding sequence of Equation~{\rm (\ref{mSeq})}. Then, for
each $k_m$ as above, one has the following equality.
\[
  M_1 M_2 \cdots M_{k_m} \, = \,
         \begin{pmatrix}
         p_{m-1} & p_m \\
         q_{m-1} & q_m
         \end{pmatrix}
        \;.
\]
Furthermore, $k_{m+1} = k_m + r_{m+1}$ where $(\ve_{m}: r_{m} )$ is
the m-th coefficient of the Rosen continued fraction expansion of
$x$.
\end{Lem}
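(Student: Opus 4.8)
The plan is to prove both assertions by induction on $m$, using the matrix factorizations established in Lemma \ref{trivMatCalc} together with the observation that the mediant map $S_k$ acts as a shift on the sequence \eqref{mSeq}. First I would set up the base case: when $x$ has first Rosen digit $(\ve_1 : r_1)$, I must track which of $U_\pm, V_\pm$ appear as $M_1, \dots, M_{k_1}$. By Lemma \ref{trueMediant} and the definition of $\ell(x)$, the first $k_1 - 1$ iterates of $S_k$ under $x$ land in the ``central'' intervals $[-\tfrac{2}{3\lambda}, 0)$ or $(0, \tfrac{2}{3\lambda}]$ — contributing $V_-$'s or $V_+$'s — while the $k_1$-th matrix is a $U_\pm$. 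I would then match the product $M_1 \cdots M_{k_1}$ against $U_{+}^{-1} V_{+}^{-(t-2)} V_{-}^{-1}$ or $U_{+}^{-1} V_{+}^{-(t-1)}$ from Lemma \ref{trivMatCalc}: inverting these and comparing with Definition \ref{def:mats}, one reads off that $M_1 \cdots M_{k_1}$ equals the matrix $\left(\begin{smallmatrix} 0 & \ve_1 \\ 1 & \lambda r_1 \end{smallmatrix}\right)$, which is precisely $\left(\begin{smallmatrix} p_0 & p_1 \\ q_0 & q_1 \end{smallmatrix}\right)$ by the defining recurrence for convergents, and that $k_1 = r_1$ (consistent with $k_1 = k_0 + r_1$ if we set $k_0 = 0$).

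For the inductive step, I would assume $M_1 \cdots M_{k_m} = \left(\begin{smallmatrix} p_{m-1} & p_m \\ q_{m-1} & q_m \end{smallmatrix}\right)$ and $k_m = k_{m-1} + r_m$, and then analyze the block $M_{k_m + 1} \cdots M_{k_{m+1}}$. Since $S_k$ shifts the sequence, this block is exactly the block of $M_i$'s associated to the point $S_k^{k_m}(x) = T_k^m(x)$ — here Lemma \ref{trueMediant} applied iteratively guarantees $S_k^{k_m}(x) = T_k^m(x)$, because each excursion from a $U_\pm$ index to the next corresponds to one application of $T_k$. The point $T_k^m(x)$ has first Rosen digit $(\ve_{m+1} : r_{m+1})$, so by the same factorization argument as in the base case, $M_{k_m+1} \cdots M_{k_{m+1}} = \left(\begin{smallmatrix} 0 & \ve_{m+1} \\ 1 & \lambda r_{m+1} \end{smallmatrix}\right)$ and $k_{m+1} - k_m = r_{m+1}$. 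Multiplying on the right and invoking the recurrence relations $p_{m+1} = \lambda r_{m+1} p_m + \ve_{m+1} p_{m-1}$, $q_{m+1} = \lambda r_{m+1} q_m + \ve_{m+1} q_{m-1}$ gives $M_1 \cdots M_{k_{m+1}} = \left(\begin{smallmatrix} p_m & p_{m+1} \\ q_m & q_{m+1} \end{smallmatrix}\right)$, completing the induction.

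The main obstacle I anticipate is the careful bookkeeping in the factorization step: I must verify that the precise string of $V_+$'s or $V_-$'s dictated by the intervals $[-\tfrac{2}{(2t-1)\lambda}, -\tfrac{2}{(2t+1)\lambda})$ (and their positive counterparts) in the factorization Lemma of Section \ref{sec:definitions_and_fundrel}, when read through the definition of $M_i$ in terms of which interval $S_k^{i-1}(x)$ lies in, produces exactly the exponent $t - 2$ (respectively $t-1$) appearing in Lemma \ref{trivMatCalc}, and that the signs $\ve_{m+1}$ come out correctly from the distinction between the $V_{+}^{-(t-2)} V_{-}^{-1}$ case (negative digit) and the $V_{+}^{-(t-1)}$ case (positive digit). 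This amounts to tracking the orbit of $S_k$ through the nested intervals described in the monotonicity Lemma for $V_\pm^{-1}$: each application of $V_{+}^{-1}$ (or $V_{-}^{-1}$) moves the point outward through these nested intervals until it exits into a $U_\pm$-interval, and the number of steps is governed by exactly the integer $r_{m+1} = \lfloor |1/(\lambda T_k^m x)| + 1/2 \rfloor$. Once this combinatorial correspondence is pinned down, the remaining manipulations are routine matrix algebra and the convergent recurrences already recorded in Section \ref{sec:Med}.
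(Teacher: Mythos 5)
The paper does not actually supply a proof of this lemma --- it is stated bare and immediately followed by a Definition, which is in keeping with the paper's earlier treatment of Lemma \ref{trueMediant}, where the proof is dismissed with ``This follows from our definitions and of Lemmas of this section.''  Your proposal fills that gap with exactly the argument the authors clearly have in mind: an induction on $m$ reducing the matrix block $M_{k_m+1}\cdots M_{k_{m+1}}$ to the block for $T^m(x)$ via the shift property of the $M_i$-sequence and the iteration of Lemma \ref{trueMediant}, then identifying the base-case block with $\left(\begin{smallmatrix} 0 & \ve \\ 1 & \lambda r \end{smallmatrix}\right)$ by inverting the factorizations of Lemma \ref{trivMatCalc}, and finally multiplying by the convergent recurrences.

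The key bookkeeping you flag is indeed where the content lies, and your accounting is correct.  For a negative digit $(\ve,r)=(-1,t)$ with $t\ge 2$, the orbit point sits in $[-\tfrac{2}{(2t-1)\lambda},-\tfrac{2}{(2t+1)\lambda})$, one application of $V_-^{-1}$ throws it into the positive interval indexed by $l=t-1$, then $t-2$ applications of $V_+^{-1}$ walk $l$ down to $2$, and the next step is a $U_+$; so the block is $V_-V_+^{t-2}U_+$ of length $t$, whose inverse is exactly the $U_+^{-1}V_+^{-(t-2)}V_-^{-1}$ of Lemma \ref{trivMatCalc}, giving the matrix $\left(\begin{smallmatrix} 0 & -1 \\ 1 & t\lambda \end{smallmatrix}\right)$.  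For a positive digit $(+1,t)$ with $t\ge 2$ the block is $V_+^{t-1}U_+$, again of length $t$, matching $U_+^{-1}V_+^{-(t-1)}$ and giving $\left(\begin{smallmatrix} 0 & 1 \\ 1 & t\lambda \end{smallmatrix}\right)$.  For $t=1$ the block is the single matrix $U_-$ or $U_+$, which equals $\left(\begin{smallmatrix} 0 & \mp 1 \\ 1 & \lambda \end{smallmatrix}\right)$ directly.  In every case $k_{m+1}-k_m = r_{m+1}$ and right-multiplication by $\left(\begin{smallmatrix} 0 & \ve_{m+1} \\ 1 & \lambda r_{m+1} \end{smallmatrix}\right)$ reproduces the convergent recurrence, closing the induction.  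One very minor slip in wording: it is the points $x = S^0(x), S^1(x),\ldots, S^{k_1-2}(x)$ (that is, $\ell(x)=k_1-1$ points including $x$ itself) that lie in the central region, with $S^{k_1-1}(x)=S^{\ell(x)}(x)$ the first to land in a $U_\pm$-interval; your phrase ``the first $k_1-1$ iterates'' reads slightly off by one, but the matrix count is right.  Overall the proposal is correct and is the intended argument.
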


 \begin{Def} Let $x$, $m$ and $k_m$ be as above.
For each integer $l$ with $0<l<r_{m+1}$, we define
\[
\left\{ \begin{array}{c}
u_{m,l} \, = \, l \cdot \lambda\cdot p_m \, + \, \ve_{m+1}p_{m-1}\;; \\
\\
v_{m,l} \, = \, l \cdot \lambda\cdot q_m \, + \,
\ve_{m+1}q_{m-1}\;,
\end{array} \right.
\]
and call $\dfrac{u_{m,l}}{v_{m,l}}$ an {\em $l$-th mediant
convergent} of $x$.
\end{Def}

We have the following result.
\begin{Prop} With notation as above, we have
\[
M_1 \cdots M_{k_m} \cdots M_{k_m+l} \, = \,
         \begin{pmatrix}
         u_{m,l} & p_m \\
         v_{m,l} & q_m
         \end{pmatrix}
\]
for $1 \le l < r_{m+1}$.
\end{Prop}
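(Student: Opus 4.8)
The plan is to prove the matrix identity by induction on $l$, using the previous lemma as the base case and the definition of the $M_i$ to carry out the inductive step. First I would note that for $l = 0$ the claimed identity reads
\[
M_1 \cdots M_{k_m} \, = \,
         \begin{pmatrix}
         u_{m,0} & p_m \\
         v_{m,0} & q_m
         \end{pmatrix}
         \, = \,
         \begin{pmatrix}
         \ve_{m+1}p_{m-1} & p_m \\
         \ve_{m+1}q_{m-1} & q_m
         \end{pmatrix},
\]
which—after absorbing the sign $\ve_{m+1}$, or rather noting it does not appear for $l=0$ since by the preceding lemma $M_1\cdots M_{k_m}$ has first column $(p_{m-1},q_{m-1})^{t}$—agrees with the earlier lemma up to the harmless identification at $l=0$; more precisely the base of the induction should really be taken at $l=1$, as I explain next.

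The heart of the argument is identifying the matrix $M_{k_m + l}$ for $1 \le l < r_{m+1}$. By the lemma recording $k_{m+1} = k_m + r_{m+1}$, the iterates $S^{k_m}(x), S^{k_m+1}(x), \dots, S^{k_m + r_{m+1} - 2}(x)$ all lie in one of the two ``central'' intervals $[-\tfrac{2}{3\lambda},0)$ or $(0,\tfrac{2}{3\lambda}]$, so each of $M_{k_m+1}, \dots, M_{k_m + r_{m+1}-1}$ equals $V_{-}$ or $V_{+}$; moreover the sign is constant along this run and equals $\ve_{m+1}$ (this is exactly the content of the factorization in Lemma \ref{trivMatCalc}, read through the definition of $S_k$). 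So I would next check the base case $l = 1$: using the preceding lemma for $M_1 \cdots M_{k_m}$ and multiplying on the right by $M_{k_m+1} = V_{\ve_{m+1}}$, a direct $2\times 2$ computation gives
\[
\begin{pmatrix} p_{m-1} & p_m \\ q_{m-1} & q_m \end{pmatrix}
\begin{pmatrix} \ve_{m+1} & 0 \\ \lambda & 1 \end{pmatrix}
\, = \,
\begin{pmatrix} \lambda p_m + \ve_{m+1}p_{m-1} & p_m \\ \lambda q_m + \ve_{m+1}q_{m-1} & q_m \end{pmatrix}
\, = \,
\begin{pmatrix} u_{m,1} & p_m \\ v_{m,1} & q_m \end{pmatrix}.
\]
For the inductive step, assuming the identity for some $l$ with $1 \le l < r_{m+1}-1$, I multiply on the right by $M_{k_m + l + 1} = V_{\ve_{m+1}}$ again and compute
\[
\begin{pmatrix} u_{m,l} & p_m \\ v_{m,l} & q_m \end{pmatrix}
\begin{pmatrix} \ve_{m+1} & 0 \\ \lambda & 1 \end{pmatrix}
\, = \,
\begin{pmatrix} \ve_{m+1}u_{m,l} + \lambda p_m & p_m \\ \ve_{m+1}v_{m,l} + \lambda q_m & q_m \end{pmatrix},
\]
and since $\ve_{m+1}^2 = 1$ we get $\ve_{m+1}u_{m,l} + \lambda p_m = \ve_{m+1}(l\lambda p_m + p_{m-1}) \cdot \ve_{m+1} + \lambda p_m$—wait, more simply $\ve_{m+1} u_{m,l} + \lambda p_m = l\lambda \ve_{m+1}p_m + p_{m-1}$ does not immediately match, so the clean bookkeeping is to observe instead that $u_{m,l+1} = u_{m,l} + \lambda p_m$ when $\ve_{m+1}=1$ and to handle the sign by a uniform convention; this is the one place requiring care.

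The main obstacle, then, is purely bookkeeping: keeping the sign $\ve_{m+1}$ consistent between the recursion $u_{m,l+1} = \lambda p_m + \ve_{m+1} u_{m,l-?}$ implicit in the matrix product and the closed form $u_{m,l} = l\lambda p_m + \ve_{m+1}p_{m-1}$ given in the definition. The resolution is to note that multiplying by $V_{\ve_{m+1}}$ on the right adds $\lambda$ times the second column to $\ve_{m+1}$ times the first; since the second column is fixed at $(p_m, q_m)^t$ throughout the run and does not itself acquire signs, one checks directly that the first column evolves as $(u_{m,l}, v_{m,l})^t \mapsto (\ve_{m+1}u_{m,l} + \lambda p_m, \ve_{m+1}v_{m,l}+\lambda q_m)^t$, and since we only ever apply $V_{\ve_{m+1}}$ (never a mix of $V_+$ and $V_-$) within a single level $m$, an easy induction confirms this equals $(u_{m,l+1}, v_{m,l+1})^t$ after correcting the base case to start from $(\ve_{m+1}p_{m-1}, \ve_{m+1}q_{m-1})^t$. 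Everything else is the routine $2\times 2$ matrix multiplication already invoked repeatedly in this section, so the proof is short.
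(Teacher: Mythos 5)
Your main structural claim---that ``each of $M_{k_m+1}, \dots, M_{k_m + r_{m+1}-1}$ equals $V_{-}$ or $V_{+}$; moreover the sign is constant along this run and equals $\ve_{m+1}$''---is not correct, and this is precisely what generates the sign trouble you then struggle with in the inductive step. When $\ve_{m+1} = -1$ and $r_{m+1} \ge 3$, the run is not $V_-, V_-, \dots, V_-$ but rather $V_-, V_+, V_+, \dots, V_+$: only the first matrix $M_{k_m+1}$ equals $V_-$, while $M_{k_m + l} = V_+$ for every $2 \le l < r_{m+1}$. This is forced by the sign of the orbit: by Lemma~4, $V_-^{-1}$ maps the negative interval $\left(-\tfrac{2}{(2t-1)\lambda}, -\tfrac{2}{(2t+1)\lambda}\right)$ to a \emph{positive} interval, so $S^{k_m+1}(x) > 0$, and all subsequent steps within the central region therefore apply $V_+^{-1}$. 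You can read the same fact off Lemma~\ref{trivMatCalc}: the $\ve = -1$ factorization $\left(\begin{smallmatrix} t\lambda & 1 \\ -1 & 0\end{smallmatrix}\right) = U_{+}^{-1}\cdot V_{+}^{-(t-2)}\cdot V_{-}^{-1}$ contains exactly one $V_-^{-1}$ and $t-2$ copies of $V_+^{-1}$, not $t-1$ copies of $V_-^{-1}$.

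Once you make that correction the ``sign tension'' evaporates. The base case $l=1$ is exactly as you computed: right-multiplying $M_1\cdots M_{k_m}$ by $M_{k_m+1} = V_{\ve_{m+1}}$ gives first column $(\lambda p_m + \ve_{m+1}p_{m-1},\, \lambda q_m + \ve_{m+1}q_{m-1})^{t} = (u_{m,1}, v_{m,1})^{t}$. For the inductive step with $l \ge 2$ you right-multiply by $V_+$ (independently of $\ve_{m+1}$), which simply adds $\lambda$ times the unchanged second column $(p_m,q_m)^{t}$ to the first; since $u_{m,l} + \lambda p_m = (l+1)\lambda p_m + \ve_{m+1}p_{m-1} = u_{m,l+1}$, the induction closes with no sign juggling at all. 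Your proposed repair---restarting from $(\ve_{m+1}p_{m-1}, \ve_{m+1}q_{m-1})^{t}$ and updating by $(a,b) \mapsto (\ve_{m+1}a + \lambda p_m,\, \ve_{m+1}b + \lambda q_m)$---does not fix the argument: the first step already produces $p_{m-1} + \lambda p_m$, which is not $u_{m,1} = -p_{m-1} + \lambda p_m$ when $\ve_{m+1} = -1$. So the wrong identification of $M_{k_m+l}$ for $l\ge 2$ is a genuine error, not a notational wrinkle.
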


By this proposition, we see that the sequence $(M_1 \cdots
M_i(\infty) \, : \, i \ge 1)$ is
\[
\begin{aligned}
\frac{u_{0,1}}{v_{0,1}} , \, \frac{u_{0,2}}{v_{0,2}} , \, \cdots \,
&, \, \frac{u_{0,r_1-1}}{v_{0,r_1-1}} \, , \, \frac{p_0}{q_0} \, ,
\, \frac{u_{1,1}}{v_{1,1}} , \, \frac{u_{1,2}}{v_{1,2}} , \, \cdots
\, , \, \frac{u_{1,r_2-1}}{v_{1,r_2-1}} \, , \,
\frac{p_1}{q_1} \, , \, \cdots \, \\
\\
&\cdots \,,\, \frac{u_{n,1}}{v_{n,1}} , \, \frac{u_{n,2}}{v_{n,2}} ,
\, \cdots \, , \, \frac{u_{n,r_{n+1}-1}}{v_{1,r_{n+1}-1}} \, , \,
\frac{p_n}{q_n}, \, \cdots
\end{aligned}
\]
It is easy to see that
\[
    x \, = \,
    \left(
        \begin{array}{cc}
        u_{m,l} & p_m \\
        v_{m,l} & q_m
        \end{array}
   \right) \left( S^n(x)\right) ,
\]
for $n = k_{m} + l$.  We put $x_n = S^n(x)$. It follows that
\begin{equation}\label{eq:distance-x-to-mediant}
\left| x \, - \, \frac{u_{m,l}}{v_{m,l}}\right| = \left|
\frac{u_{m,l}x_n + p_m}{v_{m,l}x_n + q_m} \, - \,
\frac{u_{m,l}}{v_{m,l}}\right| = \frac{1}{v_{m,l}^2 ( x_n - (-
\frac{q_m}{v_{m,l}})\,)} ,
\end{equation}
where $-\frac{q_m}{v_{m,l}} = (M_1 \cdots M_{n})^{-1}(\infty)$.
We recall that $(M_1 \cdots M_{n})^{-1}$ is the linear fractional
transformation which defines $S^n$.
It also follows that
\begin{equation}\label{distance}
\left| x - \frac{p_{m-1}}{q_{m-1}}\right| =\frac{1}{q_{m-1}^2
(x_{k_m} \, - \, (-\frac{q_m}{q_{m-1}}))}
\end{equation}
where $-q_m/q_{m-1} = (M_1 \cdots M_{k_m})^{-1}(\infty)$.
Consequently, the distribution of
$$
\left( \, (M_1 \cdots
M_{n})^{-1}(x) \, - \, (M_1 \cdots M_{n})^{-1}(\infty) \, : \, n \ge
1 \right)
$$
determines the distribution of the error (after normalization by
the square of the denominator) of the principal and the mediant
convergents.\smallskip\

If $k=3$, since $\lambda_3 = 1$, it is easy to see that
\[
\begin{pmatrix} u_{m,1} \\ v_{m, 1} \end{pmatrix} \, = \,
\begin{pmatrix} u_{m-1,r_m - 1} \\ v_{m-1, r_m -1} \end{pmatrix}
\]
when $\ve_{m+1} = -1$. This equality never holds for $k \ge 4$.
Indeed, since $\lambda_k > 1$ for $k \ge 4$, we have---due
to~(\ref{eq:vees}),
\[
v_{m,1} = \lambda q_m - q_{m-1} > q_m - \lambda q_{m-1} = v_{m-1,
r_m - 1} .
\]
This implies that all values in
\[
\left\{ \frac{u_{m,l}}{v_{m,l}}, \, \frac{p_m}{q_m} \right\}_{m
\ge 0}
\]
are different from each other.
\section{Natural extensions}\label{sec:NatExt}
Since the work of \cite{NIT}, planar natural extensions of continued fractions maps 
have provided a significant tool in the study of ergodic properties of number theoretic transformations.  
 In this section we construct planar natural extensions of the Rosen mediant maps.

In~\cite{Roh}, Rohlin introduced and studied the concept of
\emph{natural extension} of a dynamical system. A
natural extension of $({\mathbb J}, {\mathcal B},
\mu,T)$ is an invertible dynamical system
$(\Omega, {\mathcal B}_{\Omega}, \rho, {\hat
T})$, which contains $({\mathbb J}, {\mathcal B},
\mu,T)$ as a factor, such that the Borel
$\sigma$-algebra ${\mathcal B}_{\Omega}$ of $\Omega$ is
the smallest ${\hat T}$-invariant $\sigma$-algebra
that contains $\pi^{-1}({\mathcal B})$, where $\pi$
is the factor map. A natural extension is unique up to
isomorphism.   With notation defined below, we have the following result.


\begin{Thm}\label{thm:natural-extension}
For $k \ge 4$,  the dynamical system $(\Omega^{*},
\bar{\mathcal B}, \nu,{\hat S})$ is the
natural extension of the dynamical system $({\mathbb J}_k,
{\mathcal B}, \mu_k,S)$.
\end{Thm}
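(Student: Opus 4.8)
The plan is to verify directly that $(\Omega^{*},\bar{\mathcal B},\nu,\hat S)$ satisfies Rohlin's defining properties of the natural extension of $(\mathbb J_k,\mathcal B,\mu_k,S)$, with factor map the projection $\pi(x,y)=x$. Four things must be checked: $\pi\circ\hat S=S\circ\pi$ (immediate from the definition of $\hat S$); $\pi_{*}\nu=\mu_k$ (by integrating the density of $\nu$ along fibers and comparing with the definition of $\mu_k$); that $\hat S\colon\Omega^{*}\to\Omega^{*}$ is a bijection modulo $\nu$-null sets and preserves $\nu$; and that $\bar{\mathcal B}$ is the smallest $\hat S$-invariant $\sigma$-algebra containing $\pi^{-1}(\mathcal B)$. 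By construction $\hat S$ acts diagonally, $\hat S(x,y)=\bigl(M^{-1}(x),M^{-1}(y)\bigr)$ with $M\in\{U_{\pm},V_{\pm}\}$ the matrix of Definition \ref{def:mats} attached to the branch of $S$ containing $x$; since $\nu$ is given by a density of the $\text{PSL}_2(\R)$-invariant form $dx\,dy/(x-y)^{2}$ (up to a constant), each branch of $\hat S$ preserves $\nu$, so the $\hat S$-invariance of $\nu$---and hence the $S$-invariance of $\mu_k=\pi_{*}\nu$---will follow the moment $\hat S$ is known to be a bijection of $\Omega^{*}$.

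The heart of the proof is therefore this bijectivity, an explicit tiling statement. First I would realize $\Omega^{*}$ as a finite union of slabs $D\times F_{D}$, where $D$ runs over the cells of $\mathbb J_k$ attached to the four branches of $S$ in Definition \ref{def:RosMed}---possibly further subdivided to record which two-block words $M_iM_{i+1}$ are admissible, a Markov structure read off from Definition \ref{def:RosMed} together with the recorded behaviour of $V_{\pm}^{-1}$---and where $F_{D}$ is the interval of $y$ with $(x,y)\in\Omega^{*}$, concretely the range of the ``dual'' quantities $(M_1\cdots M_n)^{-1}(\infty)$ occurring in \eqref{eq:distance-x-to-mediant}. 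Then I would compute $\hat S$ on each slab---that is, apply the relevant $M^{-1}$ to both coordinates---and check that the images partition $\Omega^{*}$ exactly, using the monotonicity properties of the branches of $S$. This is a finite computation; the delicate point is pinning down the fiber endpoints near the cells $[-\frac{2}{3\lambda},0)$ and $(0,\frac{2}{3\lambda}]$, on which the parabolic maps $V_{\mp}^{-1}$ act and which are responsible for the infiniteness of $\mu_k$, as against the two cells carrying $U_{\pm}^{-1}$.

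For the last property, $\pi\circ\hat S=S\circ\pi$ forces $\hat S^{-n}\pi^{-1}(\mathcal B)\subseteq\pi^{-1}(\mathcal B)$ for $n\ge 0$, so it suffices to show $\bigvee_{n\ge 0}\hat S^{\,n}\pi^{-1}(\mathcal B)=\bar{\mathcal B}$. Now $\pi^{-1}(\mathcal B)$ already recovers the first coordinate, and by the tiling just established the $\hat S^{\,n}$-image partition of $\Omega^{*}$ reads off from the second coordinate the first $n$ symbols of its dual (``past'') expansion; hence $\bigvee_{n\ge 0}\hat S^{\,n}\pi^{-1}(\mathcal B)$ recovers the first coordinate together with the entire dual expansion of the second, and this expansion determines the second coordinate because the nested dual cylinders shrink to a point $\nu$-a.e.---which is verified as for the Rosen algorithm itself, using that $S$ induces the expanding map $T$ of Lemma \ref{trueMediant} so that $\nu$-a.e. orbit escapes every neighborhood of the parabolic point infinitely often. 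Therefore $\bigvee_{n\ge 0}\hat S^{\,n}\pi^{-1}(\mathcal B)$ separates $\nu$-a.e. pair of points of $\Omega^{*}$ and equals $\bar{\mathcal B}$. I expect the only real obstacle to be the tiling verification in the middle step: $S$ has several branches of mixed orientation, and the fiber intervals together with the admissibility constraints among consecutive symbols have to be tracked carefully; once the explicit $\Omega^{*}$ and the bijectivity of $\hat S$ are in place, the invariance of $\nu$ and the minimality of $\bar{\mathcal B}$ follow along standard lines.
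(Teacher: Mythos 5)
Your proposal is essentially correct, but it follows a different route from the paper. The paper itself notes that one could ``follow the proof from \cite{Na1}, \ldots\ closely follow[ing] \cite{Roh}''; you take exactly this Rohlin-style direct verification, whereas the paper instead invokes Schweiger's theorem on fibred systems and dual fibred systems (\cite{Schw}, Theorem 21.2.1). In the paper one identifies the four cells $B(i),\dots,B(iv)$ of $S$, constructs the dual map $T^{\#}$ on $B^{\#}=(-\infty,0]$ with explicit cells $B^{\#}(i),\dots,B^{\#}(iv)$ solved from the tiling in Propositions \ref{prop:even region} and \ref{prop:odd region}, and then Schweiger's theorem delivers the natural extension, with the invariant density forced by the functional equation $K(Tx,y)|T'(x)|=K(x,T^{\#}y)|(T^{\#})'y|$. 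Your direct route instead must separately establish that $\bigvee_{n\ge 0}\hat S^{\,n}\pi^{-1}(\mathcal B)$ generates $\bar{\mathcal B}$, which you reduce correctly to showing that nested dual cylinders in the $y$-coordinate shrink to points $\nu$-a.e.; this is exactly the content that Schweiger's abstract machinery absorbs. Both approaches rest on the same computational core, namely the tiling verification that $\hat S$ bijects $\Omega^{\ast}$ (the paper's Propositions \ref{prop:even region} and \ref{prop:odd region}), and both exploit the $\text{PSL}_2(\R)$-invariance of $dx\,dy/(x-y)^2$; what the Schweiger route buys is a cleaner treatment of the generating-$\sigma$-algebra condition at the cost of introducing the dual-fibred-system vocabulary, while your Rohlin route is more self-contained but requires the extra shrinking-cylinders argument (for which your appeal to the inducing of the expanding Rosen map $T$ via Lemma \ref{trueMediant}, handling the indifferent fixed point at $0$, is the right idea). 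One small caution on your step ``$\pi_{*}\nu=\mu_k$'': in the paper $\mu_k$ is \emph{defined} as the marginal of $\hat\mu=\nu$ (see the beginning of Subsection \ref{Ergodicity}), so this is not an independent check but a definitional identification; the substantive measure-theoretic content is only the $\hat S$-invariance of $\nu$.
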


 There are various ways to verify that a planar system is the natural extension of a given interval map; one way is to follow the proof from ~\cite{Na1}, where the second author shows that the two-dimensional 
regions he finds are indeed the natural extensions of his $\alpha$-expansions. His 
proof closely follows ~\cite{Roh}.  Here, however,  we turn to F. ~Schweiger's ~\cite{Schw}  formalization of the ideas of ~\cite{NIT} to verify that we have found the natural extension.

In our construction, we rely on \cite{BKS}.  However, we proceed slightly differently. There the
explicit natural extension map is   
\begin{equation}\label{eq:BKSextension}
{\mathcal T}(x,y)=\left( T(x), \frac{1}{\lambda \lfloor
|\frac{1}{\lambda x}|+\frac{1}{2}\rfloor + {\text{sgn}} (x)
y}\right),
\end{equation}
which is locally of the form given by 
\[ M = \begin{pmatrix}
a  & b \\ c  &  d
\end{pmatrix} \;\; \text {sending}\;\; (x,y)\;\; \text{ to}\;\;
\left(\, \dfrac{a x + b}{c x + d}, \, \dfrac{d y -c}{-b y + a}\,\right)\;;\]
an elementary calculation thus shows that this map has 
invariant measure 
$\frac{{\rm dx}\, {\rm d}y}{(1+xy)^2}$, up to normalizing
constants.  
Here, we use the more natural action directly related to hyperbolic geometry:
\[
(x,y) \mapsto
\left(\, \dfrac{a x + b}{c x + d}, \, \dfrac{a y + b}{c y + d}\,\right)\;.
\]
These maps are conjugate, using $(x,y) \mapsto (x, -1/y)$,  thus there is no loss in proceeding in
our manner.     The invariant measure for our map is  well known to be $\frac{{\rm d}x\, {\rm d}y}{(x - y)^2}\,$.
The domain $\Omega\,$ of $\mathcal T$  is defined (depending on parity) in Theorem~3.1 and Theorem~3.2
of~\cite{BKS}; up to measure zero,  $\mathcal T(\, \Omega\,) = \Omega\,$.   We let 
\begin{equation}\label{original-Rosen-region-transformed}
\Omega_{0} = \{ (x, y) : \, (x, - 1/y) \in \Omega \}
\end{equation}
 and hence $\Omega_0$ is an isomorphic copy of the
region of the natural extension of the Rosen map $T$.

 The parity of the index $k$ is significant,  as in particular displayed by
the orbit of $\pm \frac{\lambda}{2}$ under
$S$, we thus discuss the even index case and the odd index case
separately.

\subsection{Planar system in the even index case: $k = 2\ell$}
We recall
some notations from~\cite{BKS}.  Let
$$
\phi_{0} = - \frac{\lambda}{2},\qquad \phi_{j} = T^{j} \left( -
\frac{\lambda}{2} \right), \,\, 0 \le j \le \ell -1,\qquad
\text{and}\quad \phi_{\ell -1} = 0,
$$
and
$$
L_1 = \frac{1}{\lambda + 1},\qquad L_{j} = \frac{1}{\lambda -
L_{j-1}}, \,\, 2 \le j \le \ell -1,\qquad \text{and}\quad 1 =
\frac{1}{\lambda - L_{\ell -1}}.
$$
It is in terms of  these various $\phi_i$ and $L_i$ that  \cite{BKS}  define the domain $\Omega$.

Now,  for $1 \le j \le \ell -1$, let
\[
\left\{ \begin{array}{ccl}
J_j & = &  [\phi_{j-1}, \, \phi_j) \\
J_{\ell} & = & [0, \frac{\lambda}{2} ) \\
J_{\ell +1} & = & [\frac{\lambda}{2}, \, \frac{2}{\lambda} )
\end{array} \right.
\quad \text{and}\qquad \left\{ \begin{array}{ccl}
\bar{K}_j & = &  [-\infty, \, - \frac{1}{L_j}\,] \\
\bar{K}_{\ell} & = & [-\infty , \, 0\, ]\\
\bar{K}_{\ell +1} & = & [-1, \, 0],
\end{array} \right.
\]
and let $K_{j}' = \bar{K}_j$ for $1\le j \le \ell - 1$ as well as $K_{\ell}' = [-\infty, -1]$.  
Solving,   we find
\[
\Omega_0 = \bigcup_{j=1}^{\ell} \left( J_j \times {K}_{j}'
\right) .
\]
We define the region (of the natural extension for the mediant map)
\[
\Omega^{\ast} = \bigcup_{j=1}^{\ell+1} \left( J_j \times \bar{K}_j
\right) ,
\]
see Figure~\ref{figOm8}. The map $\hat{S} : \Omega^{\ast} \to
\Omega^{\ast}$ is given by
\[
\hat{S}(x, y) = \left( \, M_1^{-1}(x), \, M_1^{-1}(y) \,\right)\,,
\]
where  $M_1=M_1(x)$ as in Equation \eqref{mSeq}.  In particular,   the projection onto the first coordinate is indeed
$M_1^{-1}(x) = S(x)\,$.

\noindent
\begin{figure}[h]
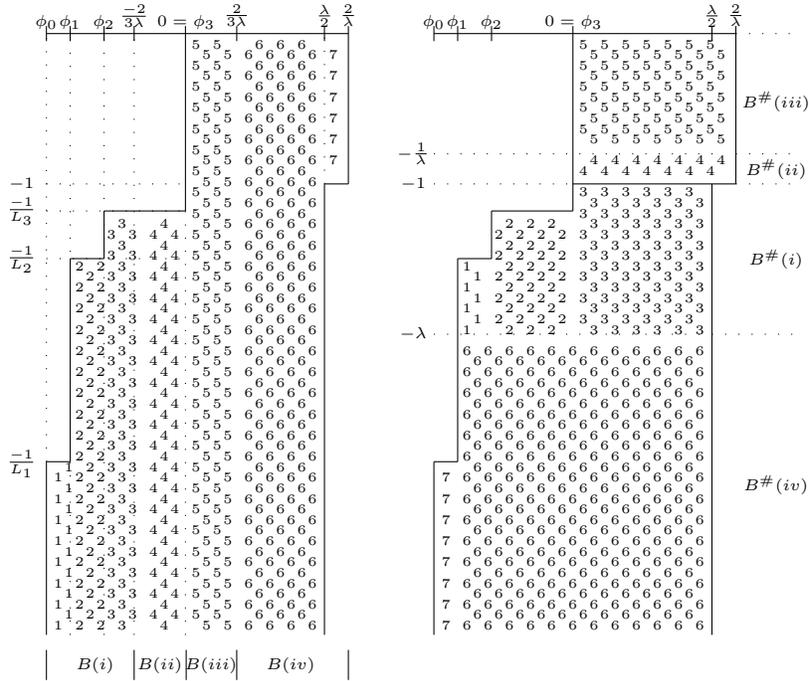

\noindent
\begin{tabular}{cc}
$$
\beginpicture
    \setcoordinatesystem units <0.2cm,0.2cm>
    \setplotarea x from -12 to 12, y from -35 to 1
    \putrule from -9.24 -41 to -9.24 -43
     \put {\tiny{$B(i)$}} at -6  -42
    \putrule from -3.41 -43 to  -3.41 -41
    \put {\tiny{$B(ii)$}} at -1.7  -42
    \putrule from 0.05 -43 to  0.05 -41
    \put {\tiny{$B(iii)$}} at 1.7  -42
    \putrule from 3.41 -43 to  3.41 -41
    \put {\tiny{$B(iv)$}} at 7  -42
    \putrule from 10.83 -43 to  10.83 -41
    \put {\tiny{$0=\phi_3$}} at 0.05 0.8
    \put {\tiny{$\phi_0$}} at -9.24 0.8
    \put {\tiny{$\phi_1$}} at -7.66 0.8
    \put {\tiny{$\phi_2$}} at -5.41 0.8
    \put {\tiny{$\frac{-2}{3 \lambda}$}} at -3.41 1.2
    \put {\tiny{$\frac{2}{3 \lambda}$}} at 3.41 1.2
    \put {\tiny{$\frac{\lambda}{2}$}} at 9.24 1.2
    \put {\tiny{$\frac{2}{\lambda}$}} at 10.83 1.2
    \put {\tiny{$\frac{-1}{L_1}$}} at -10.9  -28.6
    \put {\tiny{$\frac{-1}{L_2}$}} at -10.9  -15
    \put {\tiny{$\frac{-1}{L_3}$}} at -10.9  -11.8
    \put {\tiny{$-1$}} at -10.9  -10
    \putrule from -9.24 -0.5 to -9.24 0.5
    \putrule from -7.66 -0.5 to -7.66 0.5
    \putrule from -5.41 -0.5 to -5.41 0.5
    \putrule from -3.41 -0.5 to -3.41 0.5
    \putrule from  3.41 -0.5 to  3.41 0.5
    \putrule from 9.24 -0.5 to 9.24 0.5
    \putrule from -9.24 0 to 10.83 0
    \putrule from -9.24 -40 to -9.24 -28.48
    \putrule from -9.24 -28.48 to -7.66 -28.48
    \putrule from -7.66 -28.48 to -7.66 -14.97
    \putrule from -7.66 -14.97 to -5.41 -14.97
    \putrule from -5.41 -14.97 to -5.41 -11.8
    \putrule from -5.41 -11.8 to 0 -11.8
    \putrule from 0 -11.8 to 0 0.5
    \putrule from  9.24 -10 to 10.83 -10
    \putrule from 10.83 -10 to 10.83 0.5
    \putrule from 9.24 -40 to 9.24 -10
    \setshadesymbol <.1pt,.1pt,.1pt,.1pt> ({\fiverm 1}) \setshadegrid span <4pt> 
    \vshade  
    -9 -40 -28.6   -8.4 -40 -28.6   -7.6 -40 -28.6 
     /
   \setshadesymbol <.1pt,.1pt,.1pt,.1pt> ({\fiverm 2}) \setshadegrid span <4pt> 
    \vshade 
   -7.6 -40 -15    -6 -40 -15  -5.5 -40 -15 
   /
   \setshadesymbol <.1pt,.1pt,.1pt,.1pt> ({\fiverm 3}) \setshadegrid span <4pt> 
    \vshade 
   -5 -40 -12    -3.9 -40 -12   -3.2 -40 -12
   /
   \setshadesymbol <.1pt,.1pt,.1pt,.1pt> ({\fiverm 4}) \setshadegrid span <4pt> 
    \vshade 
  -2.8 -40 -12   -1.5 -40 -12   -0.1 -40 -12 
   /
   \setshadesymbol <.1pt,.1pt,.1pt,.1pt> ({\fiverm 5}) \setshadegrid span <4pt> 
   \vshade 
 0.5 -40 0     1.7 -40 0     3.41 -40 0   
   /
   \setshadesymbol <.1pt,.1pt,.1pt,.1pt> ({\fiverm 6}) \setshadegrid span <4pt> 
   \vshade 
3.5 -40 0  6.24 -40 0  9.15 -40 0 
   /
   \setshadesymbol <.1pt,.1pt,.1pt,.1pt> ({\fiverm 7}) \setshadegrid span <4pt> 
   \vshade 
 9.2 -9.5  0    10 -9.5 0      10.5 -9.5 0 
   /
\setdots \putrule from -9.24 -28.48 to -9.24 0 
\putrule from -7.66 -40 to -7.66 0 
\putrule from -5.41 -40 to -5.41 0 
\putrule from -3.41 -40 to -3.41 0 
\putrule from 0 -40 to 0 -11.7 
\putrule from 9.24 -10 to 9.24 0 
\putrule from -9.24 -10 to 0 -10 
\putrule from -9.24 -11.8 to 0 -11.8 
\putrule from -9.24 -14.97 to 0 -14.97
\endpicture &
\beginpicture
    \setcoordinatesystem units <0.2cm,0.2cm>
    \setplotarea x from -12 to 12, y from -35 to 1
    \put {\tiny{$B^{\#}(i)$}} at 13.5  -15
    \put {\tiny{$B^{\#}(ii)$}} at 13.5  -9
    \put {\tiny{$B^{\#}(iii)$}} at 13.5  -4.5
    \put {\tiny{$B^{\#}(iv)$}} at 13.5  -30
    \put {\tiny{$0=\phi_3$}} at 0.05 0.8
    \put {\tiny{$\phi_0$}} at -9.24 0.8
    \put {\tiny{$\phi_1$}} at -7.66 0.8
    \put {\tiny{$\phi_2$}} at -5.41 0.8
    \put {\tiny{$\frac{\lambda}{2}$}} at 9.24 1.2
    \put {\tiny{$\frac{2}{\lambda}$}} at 10.83 1.2
    \put {\tiny{$-\lambda$}} at -10.6  -20
    \put {\tiny{$-1$}} at -10.6  -10
    \put {\tiny{$-\frac{1}{\lambda}$}} at -10.6  -8
    \putrule from -9.24 -0.5 to -9.24 0.5
    \putrule from -7.66 -0.5 to -7.66 0.5
    \putrule from -5.41 -0.5 to -5.41 0.5
    \putrule from 9.24 -0.5 to 9.24 0.5
    \putrule from -9.24 0 to 10.83 0
    \putrule from -9.24 -40 to -9.24 -28.48
    \putrule from -9.24 -28.48 to -7.66 -28.48
    \putrule from -7.66 -28.48 to -7.66 -14.97
    \putrule from -7.66 -14.97 to -5.41 -14.97
    \putrule from -5.41 -14.97 to -5.41 -11.8
    \putrule from -5.41 -11.8 to 0 -11.8
    \putrule from 0 -11.8 to 0 0.5
    \putrule from 0 -10 to 10.83 -10
    \putrule from 10.83 -10 to 10.83 0.5
    \putrule from 9.24 -40 to 9.24 -10
    \setshadesymbol <.1pt,.1pt,.1pt,.1pt> ({\fiverm 7}) \setshadegrid span <4pt> 
    \vshade  
    -9 -40 -28.6   -8.4 -40 -28.6   -7.8 -40 -28.6 
     /
   \setshadesymbol <.1pt,.1pt,.1pt,.1pt> ({\fiverm 6}) \setshadegrid span <4pt> 
    \vshade 
   -7.6 -40 -21    1 -40 -21  9.15 -40 -21
   /
   \setshadesymbol <.1pt,.1pt,.1pt,.1pt> ({\fiverm 5}) \setshadegrid span <4pt> 
    \vshade 
    0.5  -7.5  0  6  -7.5 0  10.5  -7.5  0
   /
   \setshadesymbol <.1pt,.1pt,.1pt,.1pt> ({\fiverm 4}) \setshadegrid span <4pt> 
   \vshade 
    0.5  -9.5  -8  6  -9.5 -8  10.5  -9.5  -8
   /
   \setshadesymbol <.1pt,.1pt,.1pt,.1pt> ({\fiverm 3}) \setshadegrid span <4pt> 
   \vshade 
   0.5  -20 -10  6 -20 -10   9  -20 -10  
   /
   \setshadesymbol <.1pt,.1pt,.1pt,.1pt> ({\fiverm 2}) \setshadegrid span <4pt> 
   \vshade 
-5.41  -20 -12  -2.5 -20 -12   0  -20 -12
   /
   \setshadesymbol <.1pt,.1pt,.1pt,.1pt> ({\fiverm 1}) \setshadegrid span <4pt> 
   \vshade 
 -7.6 -20 -15  -6.5 -20  -15    -5.7  -20 -15
 /
\setdots 
  \putrule from -9.24  -8 to 10 -8
   \putrule from -9.24 -10 to 0 -10 
   \putrule from -9.24  -20 to 9.24 -20
   \putrule from 10.83 -10 to 15 -10
   \putrule from 9.24 -20 to 15 -20
   \putrule from 10.83 -8 to 15 -8
   \putrule from 10.83 0 to 15 0
\endpicture
$$
\end{tabular}
 \caption[nat-ext]{The region $\Omega^{\ast}$ and its image under $\hat{S}$, here $k=8$.  Note that $\Omega_{0}$ lies below $y = -1\,$.} \label{figOm8}
\end{figure}

\begin{Prop}\label{prop:even region}
The map $\hat{S}$ is surjective from $\Omega^{\ast}$ onto itself
and is injective off of  the boundaries of $J_j \times \bar{K}_j$,
$ 1 \le j \le l+1$. Moreover, $\frac{{\rm d}x\, {\rm d}y}{|x -
y|^2}$ is an invariant measure for $\hat{S}$.
\end{Prop}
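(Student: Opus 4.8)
The plan is to prove all three assertions by a single finite computation, exploiting that $\hat S$ is piecewise the diagonal action of a M\"obius map. First I would observe that $\hat S(x,y)=\bigl(M_1^{-1}(x),\,M_1^{-1}(y)\bigr)$ with $M_1=M_1(x)$ depending only on which of the four intervals of Definition~\ref{def:RosMed} contains $x$. Intersecting those four vertical strips with the rectangles $J_j\times\bar{K}_j$ gives a finite partition of $\Omega^{\ast}$ into pieces --- call them \emph{atoms} --- on each of which $\hat S$ acts as $(x,y)\mapsto(g(x),g(y))$ for a single $g\in\{U_-^{-1},V_-^{-1},V_+^{-1},U_+^{-1}\}$, and is therefore a homeomorphism onto its image. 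Thus the proposition reduces to showing: (a) each such $g$, acting diagonally, preserves $\mathrm{d}x\,\mathrm{d}y/(x-y)^2$; and (b) the images of the atoms tile $\Omega^{\ast}$ with pairwise disjoint interiors.

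For (a) I would use the classical identities: for $g=\left(\begin{smallmatrix}a&b\\c&d\end{smallmatrix}\right)$ one has $g(x)-g(y)=(ad-bc)(x-y)\big/\bigl((cx+d)(cy+d)\bigr)$ and $\mathrm{d}g(x)=(ad-bc)\,\mathrm{d}x/(cx+d)^2$, whence $\mathrm{d}g(x)\,\mathrm{d}g(y)\big/(g(x)-g(y))^2=\mathrm{d}x\,\mathrm{d}y/(x-y)^2$. Since the atoms cover $\Omega^{\ast}$ up to a null set, $\mathrm{d}x\,\mathrm{d}y/|x-y|^2$ is $\hat S$-invariant.

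For (b) I would compute $g$ applied to each atom explicitly, one strip at a time, tracking the $x$-coordinate via the values $\phi_j=T^j(-\lambda/2)$ (with $\phi_{\ell-1}=0$) and the lemma presenting $V_\pm^{-1}$ as monotone bijections of the intervals $\bigl(\tfrac{2}{(2l\pm1)\lambda},\tfrac{2}{(2l\mp3)\lambda}\bigr)$, and the $y$-coordinate via the explicit matrices of Definition~\ref{def:mats}. For example, $U_+^{-1}$ sends $J_{\ell+1}\times\bar{K}_{\ell+1}=[\tfrac{\lambda}{2},\tfrac{2}{\lambda})\times[-1,0]$ onto $J_1\times[-\infty,-(\lambda+1)]=J_1\times\bar{K}_1$, while $U_-^{-1}$ sends $J_1\times\bar{K}_1$ onto $J_2\times[-\lambda,-1/L_2]$, and so forth; collecting the results produces precisely the horizontal strips of Figure~\ref{figOm8}. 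One then reads off from these formulas both that distinct images have disjoint interiors and that their union is $\Omega^{\ast}$, which gives surjectivity. Injectivity of $\hat S$ off the boundaries of the $J_j\times\bar{K}_j$ follows immediately: on each open rectangle $(J_j\times\bar{K}_j)^{\circ}$ the map is, at worst, two homeomorphisms (split by a branch boundary $\pm\tfrac{2}{3\lambda}$) whose images have disjoint interiors, and images coming from different $J_j\times\bar{K}_j$ meet only along such boundary curves.

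I expect the only step requiring genuine care to be the fitting together of the image strips in (b), with no gap and no overlap. This is not automatic but is \emph{forced} by the defining recursions of the data: $L_1=\tfrac{1}{\lambda+1}$ is exactly what makes $U_+^{-1}$ close the strip with $y$-edge $-1$ onto the one whose $y$-edge sits at $-1/L_1$; $L_j=\tfrac{1}{\lambda-L_{j-1}}$ makes the ordinates $-1/L_j$ match up under $U_-^{-1}$; and the closing relation $1=\tfrac{1}{\lambda-L_{\ell-1}}$, together with $T^{\ell-1}(-\lambda/2)=0$, reconciles the pieces adjacent to the $y=0$ edge (in particular the ``new'' rectangle $[0,\tfrac{2}{\lambda})\times[-1,0]=\Omega^{\ast}\setminus\Omega_0$). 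Checking these finitely many identities is the substance of the argument; the remainder is routine bookkeeping.
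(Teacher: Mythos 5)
Your proposal is correct and follows essentially the same route as the paper's own proof: partition $\Omega^{\ast}$ into pieces on which $\hat S$ acts as a single diagonal M\"obius map, compute the image of each piece explicitly using the recursions defining the $\phi_j$ and $L_j$, verify that the images tile $\Omega^{\ast}$ up to a null set, and deduce invariance of $\frac{\mathrm{d}x\,\mathrm{d}y}{|x-y|^2}$ from the standard cross-ratio/Jacobian identity for fractional linear maps. The paper carries out the same case check (with the invariance step asserted rather than spelled out), so the two arguments differ only in bookkeeping.
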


\begin{proof} We must simply check that, up to measure zero
niceties, the map $\hat{S}(x,y)$ does indeed act bijectively on
$\Omega^{\ast}$.   This is a matter of elementary calculations,
which we now outline.
\begin{itemize}
\item $x \in [-\frac{\lambda}{2}, \, -\frac{2}{3 \lambda})$;
On this interval, $S(x) = -\frac{1}{x} - \lambda$. Recall that the
$\phi_i$ are in fact the orbit of $\phi_0 = -\frac{\lambda}{2}$
under iteration of this map; in particular, $\phi_{\ell -1}=0$.
Then $\hat{S}$ sends $\cup_{i=1}^{\ell -1}\, [\phi_{i-1}, \,
\phi_{i}) \times [-\infty, \, -\frac{1}{L_i}]$ to
$\cup_{i=1}^{\ell -1} \, [\phi_{i}, \, \phi_{i+1}) \times
[-\lambda_q,\, -1/L_{i+1}\,]$. Also $[\phi_{\ell -1}, \,
-\frac{2}{3 \lambda}) \times [-\infty, -\frac{1}{L_{\ell -1}}]$ is
now sent to $[0, \, \frac{\lambda}{2}) \times [-\lambda, -1]$.

\item $x \in [-\frac{2}{3 \lambda}, \, 0)$;
On this interval, $\hat{S}(x, y) \, = \, \left( \frac{-x}{\lambda
x + 1}, \, \frac{-y}{\lambda y + 1} \right)$ for $(x, y) \in
\Omega^{\ast}$.  Since  $L_{\ell -1} = \lambda-1$, we easily find
that $\hat{S}$ sends $[-\frac{2}{3\lambda}, \, 0) \times [-\infty,
\, -\frac{1}{L_{l-1}}]$  to $[0, \, \frac{2}{\lambda}) \times [-1,
\, -\frac{1}{\lambda}]$.

\item $x \in [0, \, \frac{2}{3 \lambda})$;
On this interval, $\hat{S}(x,y) = \left(\frac{x}{1 - \lambda x},
\, \frac{y}{1 - \lambda y}\right)$ for $(x, y) \in \Omega^{\ast}$.
One immediately finds that $ [0, \, \frac{2}{3 \lambda}) \times
[-\infty, 0]$ is sent to $[0, \, \frac{2}{\lambda}) \times
[-\frac{1}{\lambda}, \, 0]$.

\item $x \in [\frac{2}{3 \lambda} , \, \frac{\lambda}{2})$;
On this interval, $\hat{S}(x,y) = (\frac{1}{x} - \lambda, \,
\frac{1}{y} - \lambda)$ for $(x, y) \in \Omega^{\ast}$. One finds
that $[\frac{2}{3 \lambda} , \, \frac{\lambda}{2}) \times [-\infty
, \, 0)$ is sent to $[\phi_1, \, \frac{\lambda}{2}) \times
[-\infty ,\, -\lambda]$.

\item $x \in [\,\frac{\lambda}{2},\, \frac{2}{\lambda})$;
On this interval, also $\hat{S}(x,y) = (\frac{1}{x} - \lambda, \,
\frac{1}{y} - \lambda)$ for $(x, y) \in \Omega^{\ast}$.  Hence
$[\frac{2}{3 \lambda} , \, \frac{\lambda}{2}) \times [-1 , \, 0]$
is sent to $[-\frac{\lambda}{2} , \, \phi_1) \times [- \infty , \,
-\lambda - 1]$.
\end{itemize}
Consequently, we see that $\hat{S}$ is bijective except for
failing to be injective on the (measure zero) boundaries. The
invariance of the measure holds since $\hat{S}$ is locally of the form  $(x,y) \mapsto (A x, Ay)$ with $A$ a fractional linear transformation. \end{proof}

\noindent {\bf Remark.} Note that
\[
\iint_{\Omega^{\ast}} \frac{{\rm d}x\, {\rm d}y}{|x - y|^2}=\infty \;.
\]
\subsection{Planar system in the odd index case, $k = 2\ell +3$}
We recycle notation, now using $\phi_j$ and $L_j$ as follows (all
necessary calculations are in~\cite{BKS}):
$$
\phi_{0} = - \frac{\lambda}{2},\qquad \text{and}\quad \phi_{j} =
T^{j} \left( - \frac{\lambda}{2} \right),\,\, 0 \le j \le 2\ell
+1.
$$
We recall that
\[
\left\{ \begin{array}{l} -\frac{\lambda}{2} \le \phi_{j} < -
\frac{2}{3 \lambda} \quad \text{for} \quad j \in \{0,1,\ldots
,\ell -1
\} \cup \{\ell +1,\ldots ,2\ell \} \\
\\
-\frac{2}{3 \lambda} < \phi_{\ell} < - \frac{2}{5\lambda} \\
\\
\phi_{2\ell+1} = 0.
\end{array} \right.
\]
Also we put, with $R$ the positive root of $R^2+(2-\lambda )R-1=0$,
\[
\left\{ \begin{array}{ccl}
L_{2\ell} & = &  \lambda  -1/R \\
L_{2\ell +1} & = &  \lambda  - R \\
L_1 &  =  &  \frac{1}{2 \lambda - L_{2\ell}} \\
L_2 &  =  &  \frac{1}{2 \lambda - L_{2\ell +1}} \\
L_{j} & = & \frac{1}{\lambda - L_{j-2}}, \quad 2 < j \le 2\ell +2
\, ,
\end{array} \right.
\]
which are well-defined (see Subsection~3.2 of~\cite{BKS}). Then we
define
\[
\Omega^{\ast} = \bigcup_{j=1}^{2\ell+4} J_j \times \bar{K}_j \, :
\,
\]
where
\[
\left\{ \begin{array}{ccl}
J_{2j} & = &  [\phi_{\ell+j}, \, \phi_j), \quad 1 \le j \le \ell \\
J_{2j-1} & = &  [\phi_{j-1}, \, \phi_{j+l}), \quad 1 \le j \le \ell+1 \\
J_{2\ell+2} & = & [0, \frac{\lambda}{2} ) , \\
J_{2\ell+3} & = & [\frac{\lambda}{2}, 1 )   \\
J_{2\ell+4} & = & [1, \frac{2}{\lambda} ),
\end{array} \right.
\]
$1= -\frac{\phi_{\ell}}{\lambda \phi_{\ell} + 1}$, and
\[
\left\{ \begin{array}{ccl}
\bar{K}_j & = &  [-\infty, \, - \frac{1}{L_j}], \quad 1 \le j \le 2\ell+1 \\
\bar{K}_{2\ell+2} & = & [-\infty , \, 0, ] ,           \\
\bar{K}_{2\ell+3} & = & [-\frac{1}{\lambda - L_{2\ell+1}}, \, 0]
\, = \, [- \frac{1}{R}, \, 0] \\
\bar{K}_{2\ell+4} & = & [-\frac{1}{\lambda - L_{2\ell}}, \, 0] \,
= \, [- R, \, 0] ;
\end{array} \right.
\]
see Figure~\ref{sHatIm9}.

\begin{figure}[h]
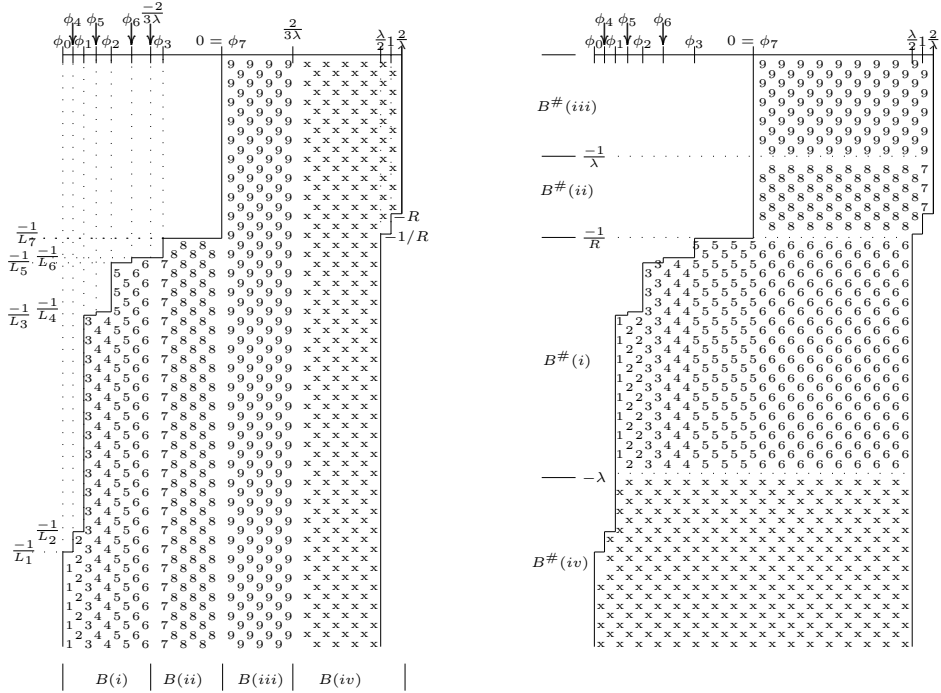

\scalebox{0.9}{
\begin{tabular}{ll}
$$
\beginpicture
    \setcoordinatesystem units <0.25cm,0.25cm>
    \setplotarea x from -15 to 15, y from -35 to 0.5
    \putrule from -9.39  -37.6 to -9.39 -36
     \put {\tiny{$B(i)$}} at -6.5  -37
    \putrule from -4.2  -37.6 to  -4.2 -36
    \put {\tiny{$B(ii)$}} at -2.3   -37
    \putrule from 0.05  -37.6 to  0.05 -36
    \put {\tiny{$B(iii)$}} at 2.3   -37
    \putrule from 4.2  -37.6 to  4.2 -36
    \put {\tiny{$B(iv)$}} at 7   -37
    \putrule from 10.83 -37.6 to  10.83 -36
    \put {\tiny{$0=\phi_7$}} at 0 0.8
    \put {\tiny{$\phi_0$}} at -9.39 0.8
    \put {\tiny{$\phi_1$}} at -8.15 0.8
    \put {\tiny{$\phi_2$}} at -6.53 0.8
    \put {\tiny{$\phi_3$}} at -3.47 0.8
    \put {\tiny{$\phi_4$}} at -8.79 2.1
    \arrow<4pt> [.2,.8] from -8.79 1.85 to -8.79 0.6
    \put {\tiny{$\phi_5$}} at -7.42 2.1
    \arrow<4pt> [.2,.8] from -7.42 1.85 to -7.42 0.6
    \put {\tiny{$\phi_6$}} at -5.32 2.1
    \arrow<4pt> [.2,.8] from -5.32 1.85 to -5.32 0.6
    \put {\tiny{$\frac{-2}{3 \lambda}$}} at -4.1 2.5
    \arrow<4pt> [.2,.8] from -4.2 1.8 to -4.2 0.6
     \put {\tiny{$\frac{2}{3 \lambda}$}} at 4.2 1.5
    \put {\tiny{$\frac{\lambda}{2}$}} at 9.39 0.9
    \put {\tiny{$\frac{2}{\lambda}$}} at 10.64 0.9
    \put {\tiny{$\frac{-1}{L_1}$}} at -11.7  -29.4
    \put {\tiny{$\frac{-1}{L_2}$}} at -10.3  -28.2
    \put {\tiny{$\frac{-1}{L_3}$}} at -12  -15.45
    \put {\tiny{$\frac{-1}{L_4}$}} at -10.3  -15.1
    \put {\tiny{$\frac{-1}{L_5}$}} at -12  -12.3
    \put {\tiny{$\frac{-1}{L_6}$}} at -10.3  -12
    \put {\tiny{$\frac{- 1}{L_7}$}} at -11.5  -10.6
    \put {\tiny{$-1/R$}} at 10.9  -10.8
    \put {\tiny{$-R$}} at 10.9  -9.6
    \put {\tiny{$1$}} at 10  0.8
    \putrule from 10 -0.5 to 10 0.5
    \putrule from -9.39 0 to 10.64 0
    \putrule from -9.39 -35 to -9.39 -29.4
    \putrule from -9.39 -0.5 to -9.39 0.5
    \putrule from -9.39 -29.4 to -8.79 -29.4
    \putrule from -8.79 -29.4 to -8.79 -28.2
    \putrule from -8.79 -0.5 to -8.79 0.5
    \putrule from -8.79 -28.2 to -8.15 -28.2
    \putrule from -8.15 -28.2 to -8.15 -15.4
    \putrule from -8.15 -0.5 to -8.15 0.5
    \putrule from -8.15 -15.4 to -7.42 -15.4
    \putrule from -7.42 -0.5 to -7.42 0.5
    \putrule from -7.42 -15.4 to -7.42 -15.2
    \putrule from -7.42 -15.2 to -6.53 -15.2
    \putrule from -6.53 -15.2 to -6.53 -12.3
    \putrule from -6.53 -0.5 to -6.53 0.5
    \putrule from -6.53 -12.3 to -5.32 -12.3
    \putrule from -5.32 -12.3 to -5.32 -12
    \putrule from -5.32 -0.5 to -5.32 0.5
    \putrule from -5.32 -12 to -3.47 -12
    \putrule from -4.2 -0.5 to -4.2 0.5
    \putrule from -3.47 -0.5 to -3.47 0.5
    \putrule from -3.47 -12 to -3.47 -10.85
    \putrule from -3.47 -10.85 to 0 -10.85
    \putrule from 0 -10.85 to 0 0.5
    \putrule from 9.39 -10.6 to 10 -10.6
    \putrule from  4.2 -0.5 to 4.2 0.5
    \putrule from 10.64 -9.4 to 10.64 0.5
    \putrule from 10 -10.6 to 10 -9.4
    \putrule from 10 -9.4 to 10.64 -9.4
    \putrule from 10.64 -9.4 to 10.64 0.5
    \putrule from 9.39 -35 to 9.39 -10.6
    \putrule from 9.39 -0.5 to 9.39 0.5
%
    \setshadesymbol <.1pt,.1pt,.1pt,.1pt> ({\fiverm 1}) \setshadegrid span <4pt> 
    \vshade  
    -9.39 -35 -29.4   -8.9 -35 -29.4   -8.79 -35 -29.4 
     /
    \setshadesymbol <.1pt,.1pt,.1pt,.1pt> ({\fiverm 2}) \setshadegrid span <4pt> 
    \vshade  
    -8.79 -35 -28.2  -8.45 -35 -28.2   -8.15 -35 -28.2  
     /
    \setshadesymbol <.1pt,.1pt,.1pt,.1pt> ({\fiverm 3}) \setshadegrid span <4pt> 
    \vshade  
     -8.15 -35 -15.45  -7.79 -35 -15.45  -7.42 -35 -15.45    
     /
    \setshadesymbol <.1pt,.1pt,.1pt,.1pt> ({\fiverm 4}) \setshadegrid span <4pt> 
    \vshade  
     -7.42 -35 -15.2  -7.15 -35 -15.2  -6.53 -35 -15.2     
     /
    \setshadesymbol <.1pt,.1pt,.1pt,.1pt> ({\fiverm 5}) \setshadegrid span <4pt> 
    \vshade  
     -6.53 -35 -12.4   -6 -35 -12.4  -5.5 -35 -12.4    
     /
    \setshadesymbol <.1pt,.1pt,.1pt,.1pt> ({\fiverm 6}) \setshadegrid span <4pt> 
    \vshade  
      -5.1 -35 -12.1 -4.7 -35 -12.1   -4.2 -35 -12.1    
     /
    \setshadesymbol <.1pt,.1pt,.1pt,.1pt> ({\fiverm 7}) \setshadegrid span <4pt> 
    \vshade  
     -3.9 -35 -12.1     -3.45 -35 -12.1 -3.3  -35 -12.1   
     /
    \setshadesymbol <.1pt,.1pt,.1pt,.1pt> ({\fiverm 8}) \setshadegrid span <4pt> 
    \vshade  
    -3.3 -35 -10.85  -1.6 -35 -10.85  0 -35 -10.85
     /
    \setshadesymbol <.1pt,.1pt,.1pt,.1pt> ({\fiverm 9}) \setshadegrid span <4pt> 
    \vshade  
    0 -35 0  2.1 -35 0  4.2 -35 0
     /
    \setshadesymbol <.1pt,.1pt,.1pt,.1pt> ({\fiverm x}) \setshadegrid span <4pt> 
    \vshade  
  4.5 -35 0    6.7 -35 0  9.39 -35 0  
  /
    \setshadesymbol <.1pt,.1pt,.1pt,.1pt> ({\fiverm x}) \setshadegrid span <4pt> 
    \vshade  
  9.39 -10.6 0   9.7 -10.6 0  10 -10.6 0
     /
    \setshadesymbol <.1pt,.1pt,.1pt,.1pt> ({\fiverm x}) \setshadegrid span <4pt> 
    \vshade  
10 -9.6 0    10.3 -9.6 0   10.64 -9.6 0  
     /

\setdots 
\putrule from -9.39 -29.4 to -9.39 0 
\putrule from -8.79 -29.4 to -8.79 0 
\putrule from -8.15 -28.2 to -8.15 0 
\putrule from -7.42 -15.1 to -7.42 0 
\putrule from -6.53 -12.3 to -6.53 0 
\putrule from -5.32 -12 to -5.32 0 
\putrule from -4.2 -12 to -4.2 0 
\putrule from -3.47 -10.85 to -3.47 0
\putrule from 9.39 -10.6 to 9.39 0
\putrule from 10 -10.6 to 10 0
\putrule from -11.2  -10.85 to -3.47  -10.85
\putrule from -11.2  -29.4 to -9.24 -29.4
\putrule from -11.2  -12.3 to -6.53 -12.3
\putrule from -10   -12 to   -5.32 -12 
\arrow<4pt> [.2,.8] from -11.2  -10.85 to -3.47  -10.85
\endpicture &
\beginpicture
    \setcoordinatesystem units <0.25cm,0.25cm>
    \setplotarea x from -15 to 15, y from -35 to 0.5
    \put {\tiny{$B^{\#}(i)$}} at -11  -18
    \put {\tiny{$B^{\#}(ii)$}} at -11  -8
    \put {\tiny{$B^{\#}(iii)$}} at -11  -3
    \put {\tiny{$B^{\#}(iv)$}} at -11.4  -30
    \put {\tiny{$0=\phi_7$}} at 0 0.8
    \put {\tiny{$\phi_0$}} at -9.39 0.8
    \put {\tiny{$\phi_1$}} at -8.15 0.8
    \put {\tiny{$\phi_2$}} at -6.53 0.8
    \put {\tiny{$\phi_3$}} at -3.47 0.8
    \put {\tiny{$\phi_4$}} at -8.79 2.1
    \arrow<4pt> [.2,.8] from -8.79 1.85 to -8.79 0.6
    \put {\tiny{$\phi_5$}} at -7.42 2.1
    \arrow<4pt> [.2,.8] from -7.42 1.85 to -7.42 0.6
    \put {\tiny{$\phi_6$}} at -5.32 2.1
    \arrow<4pt> [.2,.8] from -5.32 1.85 to -5.32 0.6
    \put {\tiny{$\frac{\lambda}{2}$}} at 9.39 0.9
    \put {\tiny{$\frac{2}{\lambda}$}} at 10.64 0.9
    \put {\tiny{$\frac{-1}{R}$}} at -9.4  -10.8
     \put {\tiny{$-\lambda$}} at -9.4  -25
    \put {\tiny{$\frac{-1}{\lambda}$}} at -9.4  -6
    \put {\tiny{$1$}} at 10  0.8
    \putrule from 10 -0.5 to 10 0.5
    \putrule from -9.39 0 to 10.64 0
    \putrule from -9.39 -35 to -9.39 -29.4
    \putrule from -9.39 -0.5 to -9.39 0.5
    \putrule from -9.39 -29.4 to -8.79 -29.4
    \putrule from -8.79 -29.4 to -8.79 -28.2
    \putrule from -8.79 -0.5 to -8.79 0.5
    \putrule from -8.79 -28.2 to -8.15 -28.2
    \putrule from -8.15 -28.2 to -8.15 -15.4
    \putrule from -8.15 -0.5 to -8.15 0.5
    \putrule from -8.15 -15.4 to -7.42 -15.4
    \putrule from -7.42 -0.5 to -7.42 0.5
    \putrule from -7.42 -15.4 to -7.42 -15.2
    \putrule from -7.42 -15.2 to -6.53 -15.2
    \putrule from -6.53 -15.2 to -6.53 -12.3
    \putrule from -6.53 -0.5 to -6.53 0.5
    \putrule from -6.53 -12.3 to -5.32 -12.3
    \putrule from -5.32 -12.3 to -5.32 -12
    \putrule from -5.32 -0.5 to -5.32 0.5
    \putrule from -5.32 -12 to -3.47 -12
    \putrule from -3.47 -0.5 to -3.47 0.5
    \putrule from -3.47 -12 to -3.47 -10.85
    \putrule from -3.47 -10.85 to 0 -10.85
    \putrule from 0 -10.85 to 0 0.5
    \putrule from 9.39 -10.6 to 10 -10.6
    \putrule from 10.64 -9.4 to 10.64 0.5
    \putrule from 10 -10.6 to 10 -9.4
    \putrule from 10 -9.4 to 10.64 -9.4
    \putrule from 10.64 -9.4 to 10.64 0.5
    \putrule from 9.39 -35 to 9.39 -10.6
    \putrule from 9.39 -0.5 to 9.39 0.5
\putrule from -12.5 0 to -10.5 0
\putrule from -12.5 -6 to -10.5 -6
\putrule from -12.5 -10.8 to -10.5 -10.8
\putrule from -12.5 -25 to -10.5 -25
%
    \setshadesymbol <.1pt,.1pt,.1pt,.1pt> ({\fiverm x}) \setshadegrid span <4pt> 
    \vshade  
 -9.39 -35 -32
 -8.79 -35  -30
 -8.6 -35  -30
-8.15 -35  -25  
9.39      -35 -25   
     /
    \setshadesymbol <.1pt,.1pt,.1pt,.1pt> ({\fiverm 9}) \setshadegrid span <4pt> 
    \vshade  
    0  -6 0   5.3  -6 0   10.64  -6 0 
     /
    \setshadesymbol <.1pt,.1pt,.1pt,.1pt> ({\fiverm 8}) \setshadegrid span <4pt> 
    \vshade  
    0  -10.6 -6.5   5.3   -10.6 -6.5   10 -10.6 -6.5 
     /
    \setshadesymbol <.1pt,.1pt,.1pt,.1pt> ({\fiverm 7}) \setshadegrid span <4pt> 
    \vshade  
    10   -9.6 -6.5   10.3   -9.6 -6.5   10.64   -9.6 -6.5 
    /  
    \setshadesymbol <.1pt,.1pt,.1pt,.1pt> ({\fiverm 6}) \setshadegrid span <4pt> 
    \vshade  
    0   -24.5 -10.8    5   -24.5 -10.8   9.5   -24.5 -10.8 
    /  
    \setshadesymbol <.1pt,.1pt,.1pt,.1pt> ({\fiverm 5}) \setshadegrid span <4pt> 
    \vshade  
      -3.47   -24.5 -11.1   -1.6   -24.5 -11.1   0.1   -24.5 -11.1
    / 
    \setshadesymbol <.1pt,.1pt,.1pt,.1pt> ({\fiverm 4}) \setshadegrid span <4pt> 
    \vshade  
        -5.32   -24.5 -12   -4.4   -24.5 -12  -3.47   -24.5 -12 
    /  
    \setshadesymbol <.1pt,.1pt,.1pt,.1pt> ({\fiverm 3}) \setshadegrid span <4pt> 
    \vshade  
    -6.53   -24.5 -12.3  -6.0   -24.5 -12.3  -5.32   -24.5 -12.3
    /  
    \setshadesymbol <.1pt,.1pt,.1pt,.1pt> ({\fiverm 2}) \setshadegrid span <4pt> 
    \vshade  
 -7.42   -24.5 -15.3  -6.32   -24.5 -15.3     -6.53   -24.5 -15.3
    /  
    \setshadesymbol <.1pt,.1pt,.1pt,.1pt> ({\fiverm 1}) \setshadegrid span <4pt> 
    \vshade  
-8.15   -24.5 -15.45     -7.73   -24.5 -15.45   -7.42   -24.5 -15.45
    /  
\setdots 
\putrule from -8 -6 to 10 -6
\putrule from  -8   -10.8 to -3.47 -10.8
\putrule  from 0  -10.8  to 9 -10.8
\putrule from  -8  -24.75 to  9.4  -24.75
\endpicture
$$ 
\end{tabular}
}
\caption{The region $\Omega^{\ast}$ and its image under $\hat{S}$, here $k=9$.  Here,  $\Omega_{0}$ lies below $y = -1/R\,$.} \label{sHatIm9}
\end{figure}

\begin{Prop}\label{prop:odd region}
The map $\hat{S}(x,y)=\left( M_1^{-1}(x), \, M_1^{-1}(y) \right)$
of $\Omega^{\ast}$ is bijective off of the boundaries of $J_j
\times \bar{K}_j$, $1 \le j \le 2l+2$.  Moreover, $\frac{{\rm d}x\, {\rm
d}y}{|x - y|^{2}}$ is an invariant measure for $\hat{S}$.
\end{Prop}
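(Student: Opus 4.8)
The plan is to carry over, essentially verbatim, the argument used for Proposition~\ref{prop:even region}: first verify by a finite list of elementary computations that $\hat S(x,y) = (M_1^{-1}(x), M_1^{-1}(y))$ permutes the rectangles $J_j \times \bar K_j$ among themselves bijectively, up to their boundaries, and then deduce invariance of $\frac{{\rm d}x\, {\rm d}y}{|x-y|^2}$ from the fact that on each branch of $S$ the map $\hat S$ is of the diagonal form $(x,y) \mapsto (g(x), g(y))$ with $g$ a fractional linear transformation.

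For the bijectivity I would split $\Omega^{\ast}$ according to the four branches of $S$ of Definition~\ref{def:RosMed}: $M_1^{-1}$ is $U_-^{-1}$ (i.e.\ $t \mapsto -\lambda - 1/t$) on $[-\lambda/2, -2/(3\lambda))$, it is $V_-^{-1}$ (i.e.\ $t \mapsto -t/(\lambda t + 1)$) on $[-2/(3\lambda), 0)$, it is $V_+^{-1}$ (i.e.\ $t \mapsto t/(1 - \lambda t)$) on $(0, 2/(3\lambda)]$, and it is $U_+^{-1}$ (i.e.\ $t \mapsto 1/t - \lambda$) on $(2/(3\lambda), 2/\lambda)$. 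Exactly as in the even case, the recursions defining the $\phi_j$ (the $T$-orbit of $-\lambda/2$) and the $L_j$ are what force the $x$- and $y$-images of each cell to land on another cell: the relation $L_j = 1/(\lambda - L_{j-2})$ is precisely what makes $U_-^{-1}$ send $[-\infty, -1/L_{j-2}]$ onto $[-\lambda, -1/L_j]$, the relations $L_1 = 1/(2\lambda - L_{2\ell})$ and $L_2 = 1/(2\lambda - L_{2\ell+1})$ take care of the two cells whose $x$-intervals extend back to $\phi_0 = -\lambda/2$, and the images produced by different branches over a common base interval stack in the $y$-direction to recover the full fibers $\bar K_j$.

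I expect the main obstacle to be the bookkeeping forced by the odd index. The orbit $(\phi_j)$ of $-\lambda/2$ under $T$ now visits $[-\lambda/2, -2/(3\lambda))$ twice --- for $j \in \{0, \dots, \ell-1\}$ and again for $j \in \{\ell+1, \dots, 2\ell\}$ --- before landing in $(-2/(3\lambda), -2/(5\lambda))$ at $j = \ell$ and reaching $0$ only at $j = 2\ell+1$, so the cells over that first branch interval are the interleaved $J_{2j-1} = [\phi_{j-1}, \phi_{j+\ell})$ and $J_{2j} = [\phi_{\ell+j}, \phi_j)$, and there are the additional tail cells $J_{2\ell+3} = [\lambda/2, 1)$ and $J_{2\ell+4} = [1, 2/\lambda)$ with the finite fibers $\bar K_{2\ell+3} = [-1/R, 0]$ and $\bar K_{2\ell+4} = [-R, 0]$. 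Tracking these requires the quadratic relation $R^2 + (2 - \lambda)R - 1 = 0$ --- equivalently $\lambda - L_{2\ell+1} = R$, $\lambda - L_{2\ell} = 1/R$, and $1 = -\phi_\ell/(\lambda \phi_\ell + 1)$ --- in order to see that the images of the cells on which $M_1^{-1}$ equals $V_-^{-1}$, $V_+^{-1}$ or $U_+^{-1}$ (in particular the two cells over $[\phi_\ell, 0)$ and the tail cells) reassemble the rest of $\Omega^{\ast}$ with neither gap nor overlap. I would organize this as a compact ``source cell $\mapsto$ image cell'' table --- essentially the content of the right-hand picture of Figure~\ref{sHatIm9} --- and check the finitely many edge identifications by hand; the computation is elementary, but it is where all of the odd-index arithmetic enters and where one most easily slips on an endpoint.

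Finally, for the invariant measure: on each branch $\hat S(x,y) = (g(x), g(y))$ with $g = \begin{pmatrix} a & b \\ c & d \end{pmatrix}$, and since $g(x) - g(y) = \dfrac{(ad - bc)(x - y)}{(cx + d)(cy + d)}$ while $g'(t) = \dfrac{ad - bc}{(ct + d)^2}$, the change of variables gives $\dfrac{{\rm d}g(x)\, {\rm d}g(y)}{(g(x) - g(y))^2} = \dfrac{{\rm d}x\, {\rm d}y}{(x - y)^2}$; hence $\frac{{\rm d}x\, {\rm d}y}{|x - y|^2}$ is preserved on each cell, and therefore, by the bijectivity just established, on all of $\Omega^{\ast}$.
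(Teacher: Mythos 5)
Your proposal is correct and follows essentially the same route as the paper: the paper also splits $\Omega^{\ast}$ by the four branches of $S$ (with the interval $(2/(3\lambda), 2/\lambda)$ further broken at $\lambda/2$ and $\alpha=1$), checks via the $\phi_j$ and $L_j$ recursions together with $R^2+(2-\lambda)R-1=0$ that the images of the cells reassemble $\Omega^{\ast}$ up to boundary, and obtains measure invariance from the observation that $\hat S$ is locally of the form $(x,y)\mapsto(g(x),g(y))$ for a M\"obius $g$. The only cosmetic slip is your opening phrase that $\hat S$ ``permutes the rectangles $J_j\times\bar K_j$''; as you yourself note a few lines later, it does not permute cells but rather sends each cell to a horizontal band, and bands coming from different branches stack to fill the fibers $\bar K_j$.
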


\begin{proof} The invariance of the measure has already been remarked upon.  
The first part of the assertion follows as in Proposition ~\ref{prop:even region}:
\begin{itemize}
\item $x \in [-\frac{\lambda}{2}, \, -\frac{2}{3 \lambda})$;
Here, $\hat{S}(x, y) \, = \, \left( -\frac{1}{x} - \lambda, \,
-\frac{1}{y} - \lambda \right)$. Thus, the corresponding image of
$\Omega^{\ast}$ is fibred below $[\phi_1, \, \frac{\lambda}{2})$,
with $y$ values in $[-\lambda, \, -\frac{1}{L_j}]$ for $S(x)$
negative, with the appropriate value of $L_j$, and $y \in
[-\lambda , \, -\frac{1}{R}]$ for $S(x)$ non-negative.

\item $x \in [-\frac{2}{3 \lambda}, \, 0)$;
Here $\hat{S}(x, y)=\left( \frac{-x}{\lambda x + 1}, \,
\frac{-y}{\lambda y + 1} \right)$ for $(x, y) \in \Omega^{\ast}$.
Thus $[-\frac{2}{3 \lambda}, \, 0) \times [-\infty, \,
-\frac{1}{L_{2\ell +1}}]$  is sent to $[0, \, \frac{2}{\lambda})
\times [-\frac{1}{\lambda - L_{2\ell +1}}, \,
-\frac{1}{\lambda}]$.

\item $x \in [0, \, \frac{2}{3 \lambda})$;
For these values of $x$, $\hat{S}(x,y) = \left(\frac{x}{1 -
\lambda x}, \, \frac{y}{1 - \lambda y}\right)$ for $(x, y) \in
\Omega^{\ast}$. Thus $ [0, \, \frac{2}{3 \lambda}) \times
[-\infty, 0]$ is sent to $[0, \, \frac{2}{\lambda}) \times
[-\frac{1}{\lambda}, \, 0]$.

\item $x \in [\frac{2}{3 \lambda} , \, \frac{\lambda}{2})$;
Here $\hat{S}(x,y) = (\frac{1}{x} - \lambda, \, \frac{1}{y} -
\lambda)$ for $(x, y) \in \Omega^{\ast}$. Thus $[\frac{2}{3
\lambda} , \, \frac{\lambda}{2}) \times [-\infty , \, 0)$ is sent
to $[\phi_1, \, \frac{\lambda}{2}) \times [-\infty ,\, -\lambda]$.

\item $x \in [\frac{\lambda}{2},\, \alpha)$;
Here again  $\hat{S}(x,y) = (\frac{1}{x} - \lambda, \, \frac{1}{y}
- \lambda)$ for $(x, y) \in \Omega^{\ast}$.  Then $[\frac{2}{3
\lambda} , \, \alpha) \times [-\frac{1}{R} , \, 0]$ is sent to
$[\phi_{\ell+1} , \, \phi_1) \times [- \infty , \, -R - \lambda ]$
\, = \, $[\phi_{\ell+1} , \, \phi_1) \times [- \infty , \,
-\frac{1}{L_2}]$.

\item $x \in [\alpha, \, \frac{2}{\lambda})$;
Once again, $\hat{S}(x,y) = (\frac{1}{x} - \lambda, \, \frac{1}{y}
- \lambda)$ for $(x, y) \in \Omega^{\ast}$ and $[\alpha , \,
\frac{2}{\lambda}) \times [-\frac{1}{\lambda - L_{2\ell}} , \, 0]$
is sent to $[-\frac{\lambda}{2} , \, \phi_{\ell+1}) \times [-
\infty , \, -\lambda - \frac{1}{L_1}]$.
\end{itemize}
Combining the above, we get the first part of the assertion of the
proposition.
\end{proof}

\subsection{Proof of Theorem \ref{thm:natural-extension}:  Planar System is Natural Extension }\label{fritzToRescue}
We first recall some terminology  and notation
from~\cite{Schw}.    Let $B$ be a set, and $T: B\to B$ be a map.
The pair $(B,T)$ is called a \emph{fibred system} if the following
three conditions are satisfied:
\begin{itemize}
\item[(a)] There is a finite or countable set $I$ (called the
digit set);
\item[(b)] There is a map $k: B\to I$. Then the sets
$B(i)=k^{-1}(i)$ form a partition of $B$;
\item[(c)] The restriction of $T$ to any $B(i)$ is an injective
map;
\end{itemize}
see~\cite{Schw}, Definition~1.1.1.
From  our definition of the Rosen mediant map, see Definition ~\ref{def:RosMed}, 
 we naturally have
$B={\mathbb J}_k=[-\lambda /2, 2/\lambda ]$, $T=S$,
$I=\{ i, ii, iii, iv\}$, and
$B(i)=[-\frac{\lambda}{2}, -\frac{2}{3\lambda})$, $B(ii)=[-\frac{2}{3\lambda}, 0)$,
$B(iii)=[0,\frac{2}{3\lambda})$, and $B(iv)=[\frac{2}{3\lambda},\frac{2}{\lambda})$;
see also Figures~\ref{figOm8} and ~\ref{sHatIm9}.\smallskip\

The pair $(B^{\#},T^{\#})$ is called a \emph{dual fibred system}
(or \emph{backward algorithm}) with respect to $(B,T)$ if the
following condition holds: $(k_1,k_2,\dots,k_n)$ is an admissible
block of digits for $T$ if and only if $(k_n,\dots, k_2,k_1)$ is
admissible for $T^{\#}$; see Definition~21.1.1 in~\cite{Schw}.
Furthermore,
\begin{eqnarray*}
D(x)&:=&\{ y\in B^{\#}\,\vert\, y\in B^{\#}(k_1,k_2,\dots,k_N)
\text{ if and only if }\\
& & \quad T^{-N}(x)\cap B(k_N,\dots,k_2,k_1)\neq \emptyset ,\,
\text{ for all $N\geq 1$}\} ;
\end{eqnarray*}
see Definition~21.1.7 from~\cite{Schw}. The local inverse of the
map $T: B(k)\to B$ is denoted by $V(k)$. Schweiger obtained the
following theorem; see~\cite{Schw}, Theorem~21.2.1.      

\begin{Thm}[Schweiger]  Consider the following dynamical system
$(\bar{B},\bar{T})$, with $\bar{B}=\{ (x,y); x\in B, y\in D(x)\}$,
and where $\bar{T}:\bar{B}\to \bar{B}$ is defined by
\[
\bar{T}(x,y)=\left( \,T(x), V^{\#}(k(x))(y)\, \right)\, .
\]
If $\bar{T}$ is measurable, then $(\bar{B},\bar{T})$ equipped with
the obvious product $\sigma$-algebra is an invertible dynamical
system.  Furthermore, if $K$ is a non-negative measurable function such that 
\[K(Tx, y) \, \vert T'(x)\,\vert = K(x, T^{\#}y) \, \vert\, (T^{\#})' y\,\vert\]
then $K$ is an invariant density for this system.
The dynamical system
$(\bar{B},\bar{T})$ is the natural extension of $(B,T)$.
\end{Thm}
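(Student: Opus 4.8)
The plan is to recognize $(\bar B,\bar T)$ as a concrete coordinatization of the Rohlin inverse limit of $(B,T)$, which by~\cite{Roh} is, up to isomorphism, the natural extension. Recall that the inverse limit $\varprojlim(B,T)$ consists of all sequences $(x_0,x_1,x_2,\dots)$ with $T(x_{i+1})=x_i$ for every $i\ge 0$; it carries the shift $\sigma(x_0,x_1,\dots)=(T(x_0),x_0,x_1,\dots)$ and the projection $\pi_\infty(x_0,x_1,\dots)=x_0$, and this invertible system, with $\pi_\infty$ as factor map onto $(B,T)$, is the natural extension of $(B,T)$ --- its defining $\sigma$-algebra being the smallest one making all $\pi_\infty\circ\sigma^{-n}$, $n\ge 0$, measurable, equivalently the product $\sigma$-algebra generated by the coordinate projections. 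So it suffices to produce a measurable isomorphism $\Phi\colon\bar B\to\varprojlim(B,T)$ with $\Phi\circ\bar T=\sigma\circ\Phi$ and $\pi_\infty\circ\Phi=\pi$, where $\pi(x,y)=x$; the invertibility of $\bar T$ (its measurability being hypothesized), the factor property of $\pi$, and the minimality of $\bar{\mathcal B}$ then all transfer from the inverse-limit side.

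The crux is the identification of $D(x_0)$ with the set of admissible ``pasts'' of $x_0$. Specifying a past $(x_1,x_2,\dots)$ of $x_0$ is the same as choosing a digit sequence $(j_1,j_2,\dots)$ with $x_i=V(j_i)(x_{i-1})$, where $V(j)$ is the local inverse of $T$ on $B(j)$; such a sequence is realizable precisely when $T^{-N}(x_0)\cap B(j_N,\dots,j_1)\neq\emptyset$ for all $N\ge1$, which by the defining property of the dual fibred system and Definition~21.1.7 of~\cite{Schw} is exactly the requirement that $(j_1,j_2,\dots)$ be the $T^{\#}$-expansion of some $y\in D(x_0)$. Under the standing regularity hypotheses of~\cite{Schw}, a point of $B^{\#}$ is determined by its $T^{\#}$-expansion, so sending $y\in D(x_0)$ to its expansion is a bijection onto the admissible pasts of $x_0$, and I set $\Phi(x,y)=(x,x_1,x_2,\dots)$ accordingly; measurability of $\Phi^{\pm1}$ is routine, since finitely many coordinates on either side are pinned down by finitely many $T$- or $T^{\#}$-digits. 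To see $\Phi$ intertwines the dynamics, observe that $\sigma$ advances $x_0$ to $T(x_0)$ and moves $x_0$ to the head of the past, whose leading digit is $k(x_0)$ because $x_0\in B(k(x_0))$, while prepending a digit $j$ to a $T^{\#}$-expansion is effected by the branch $V^{\#}(j)$; hence $\sigma\bigl(\Phi(x,y)\bigr)=\Phi\bigl(T(x),V^{\#}(k(x))(y)\bigr)=\Phi\bigl(\bar T(x,y)\bigr)$. (One can also verify invertibility of $\bar T$ by hand: $\bar T(x_1,y_1)=\bar T(x_2,y_2)$ forces $k(x_1)=k(x_2)$, the ranges $B^{\#}(j)$ of the $V^{\#}(j)$ being disjoint, and then injectivity of the relevant branches of $T$ and $T^{\#}$ gives $(x_1,y_1)=(x_2,y_2)$; surjectivity onto $(x',y')$ follows by taking $j$ the digit of $y'$, using $y'\in D(x')$ with $N=1$ to obtain the unique $x\in B(j)$ with $T(x)=x'$, putting $y=T^{\#}(y')$, and checking $y\in D(x)$.)

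For the invariant density I would argue by change of variables. Since $\bar T$ is essentially bijective, $K\,{\rm d}x\,{\rm d}y$ is $\bar T$-invariant exactly when $K\bigl(\bar T^{-1}(u,v)\bigr)\,\bigl|J_{\bar T^{-1}}(u,v)\bigr|=K(u,v)$ almost everywhere. Writing $(x,y)=\bar T^{-1}(u,v)$, one has $u=T(x)$ and $v=V^{\#}(k(x))(y)$, and if $j$ denotes the digit of $v$ then $k(x)=j$, $x=V(j)(u)$, $y=T^{\#}(v)$, so the chain rule gives $J_{\bar T^{-1}}(u,v)=V(j)'(u)\,(T^{\#})'(v)=(T^{\#})'(v)/T'(x)$. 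Thus the invariance condition reduces to $K(x,y)\,|(T^{\#})'(v)|=K(u,v)\,|T'(x)|$, which, after substituting $u=T(x)$ and $y=T^{\#}(v)$ and renaming $v$ as $y$, is exactly the hypothesized functional equation $K(Tx,y)\,|T'(x)|=K(x,T^{\#}y)\,|(T^{\#})'y|$. Hence $K$ is an invariant density.

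I expect the main obstacle to be the combinatorial heart of the identification, namely the claim that deleting the leading digit of an admissible past compatible with $x'$ yields an admissible past compatible with $x=V(j)(x')$ --- i.e.\ that $y=T^{\#}(y')$ genuinely lies in $D(x)$. This is precisely where the exact correspondence between $T$-admissible blocks and their reversals as $T^{\#}$-admissible blocks (Definition~21.1.1 and the dual-system condition of~\cite{Schw}) must be used with care, and it is also what forces the regularity hypotheses guaranteeing that infinite $T^{\#}$-expansions separate points of $B^{\#}$, without which $\Phi$ need not be a bijection. The remaining steps are formal.
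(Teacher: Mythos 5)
The paper does not prove this statement; it is quoted verbatim as Theorem~21.2.1 from Schweiger's book and used as a black box to verify that $(\Omega^{*},\hat S)$ is the natural extension. So there is no in-paper proof to compare against. Your argument is a correct, self-contained derivation of Schweiger's theorem: you identify $\bar B$ with the Rohlin inverse limit by reading $D(x)$ as the set of admissible pasts of $x$ coded by their $T^{\#}$-expansions, check that prepending the digit $k(x)$ (via $V^{\#}(k(x))$) intertwines $\bar T$ with the shift, and recover the invariance of $K$ from the change-of-variables Jacobian $|T'(x)|/|(T^{\#})'(v)|$. You correctly isolate the one nontrivial combinatorial step---that $T^{\#}(y')\in D\bigl(V(j)(x')\bigr)$, which follows from the biconditional defining $D$ together with injectivity of $T$ on each $B(j)$---and the hypothesis that $T^{\#}$-expansions separate points of $B^{\#}$, which is indeed part of Schweiger's standing regularity assumptions. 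Nothing is missing.
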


In our setting we have by construction that
$\bar{B}=\Omega^*$. So to apply Schweiger's Theorem, we
need to find a backward algorithm $(B^{\#},T^{\#})$, such that
\begin{equation}\label{Vshap-map}
V^{\#}(k(x))(y)=M_1^{-1}(y).
\end{equation}
Thus,  on each $B^{\#}(\kappa)$   we have that $T^{\#}$ is  given by the inverse of the matrix giving  the Rosen mediant map on the corresponding $B(\kappa)$.  Comparing with Definition ~\ref{def:RosMed},   this 
map must be
$$
T^{\#}(y)=\left\{ \begin{array}{ll}
\dfrac{-1}{y+\lambda},  & y\in B^{\#}(i)\,;\\
 & \\
\dfrac{-y}{\lambda y+1}, & y\in B^{\#}(ii)\,;\\
 & \\
\dfrac{y}{\lambda y+1}, & y\in B^{\#}(iii)\,;\\
 & \\
\dfrac{1}{y+\lambda},  & y\in B^{\#}(iv)\,.
\end{array}\right.
$$

Using the proofs of Propositions ~\ref{prop:even region} and ~\ref{prop:odd region},   we solve to find the partition of $B^{\#}= (-\infty, 0]$   (recall that $R=1$ in the even index case): 
$$
B^{\#}(i)=[-\lambda , -1/R),\quad B^{\#}(ii)=[-1/R,-\tfrac{1}{\lambda}),
\quad B^{\#}(iii)=[-\tfrac{1}{\lambda},0),
$$
and
$B^{\#}(iv)=(-\infty, -\lambda )$; again, see Figures~\ref{figOm8} and ~\ref{sHatIm9}.
An easy calculation shows that $V^{\#}(k(y))$
satisfies~(\ref{Vshap-map}).\smallskip

We find that $(B^{\#},T^{\#})$ is the dual fibred system with
respect to $({\mathbb J}_k,S)$. We already saw that an
invariant measure is given by
$\frac{{\rm d}x\, {\rm d}y}{|x -
y|^2}$ is an invariant measure for $\hat{S}$, note that this is compatible with the requirements on $K$ in Schweiger's theorem. Thus it follows that $(\Omega^{*},
\bar{\mathcal B}, \nu,{\hat S})$ is the
natural extension of the dynamical system $({\mathbb J}_k,
{\mathcal B}, \mu_k,S)$.
\qed

\bigskip 

\noindent {\bf Remark.} Note  that one could use the above method to verify that the system given in \cite{BKS} is indeed the natural extension of the Rosen map.   With this, as suggested by the referee, one can use the relationship between $S$ and $T$ to show that $\hat{S}$ is indeed the map giving the natural extension of $S$.

\subsection{Ergodicity}\label{Ergodicity}
We denote by $\hat{\mu}$ the measure defined by $\frac{1}{(x - y)^2}$ as its
density function with respect to Lebesgue measure and by $\mu$ its marginal distribution on the first
coordinate.
\begin{Thm}\label{thm:ErgodicNatEx}
The dynamical system $(\Omega^{\ast}, \, \hat{S} , \, \hat{\mu})$
is ergodic, and its entropy $h(\hat{S}, \, \hat{\mu})$ is equal to
$\frac{(k-2)\pi^2}{2k}$.
\end{Thm}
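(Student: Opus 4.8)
The plan is to establish ergodicity first and then extract the entropy from the relation between $S$ and the Rosen map $T$, whose ergodic-theoretic invariants are already known from \cite{BKS}. For ergodicity of $(\Omega^{\ast}, \hat{S}, \hat{\mu})$, I would argue as follows. Since $\hat{S}$ is (by Theorem \ref{thm:natural-extension}) the natural extension of $({\mathbb J}_k, {\mathcal B}, \mu_k, S)$, it suffices to prove that $S$ itself is ergodic with respect to $\mu_k$; a system is ergodic if and only if its natural extension is. To see that $S$ is ergodic, I would use Lemma \ref{trueMediant}: the Rosen map $T$ is the jump (first-return-type) transformation obtained from $S$ by iterating until the orbit lands in $[-\tfrac{\lambda}{2}, -\tfrac{2}{3\lambda}) \cup (\tfrac{2}{3\lambda}, \tfrac{2}{\lambda})$ and then applying one more step. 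Concretely, $S^{\ell(x)+1}(x) = T(x)$, so $T$ is an induced/jump transformation of $S$ on a set $A$ of finite $\mu_k$-measure (the set where the Rosen map "acts"), and $T$ is known to be ergodic with respect to its absolutely continuous invariant measure (this is Rosen's classical result, reproved via the natural extension in \cite{BKS}). By Abramov-type reasoning together with the fact that $A$ is a sweep-out set for $S$ (every point eventually enters $A$, since the mediant steps $V_\pm^{-1}$ strictly expand the relevant subintervals and cannot be iterated indefinitely without leaving the "mediant zone" $[-\tfrac{2}{3\lambda}, \tfrac{2}{3\lambda}]$), ergodicity of the jump transformation $T$ lifts to ergodicity of $S$ on all of ${\mathbb J}_k$. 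This gives ergodicity of $\hat{S}$.

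For the entropy computation, I would invoke Abramov's formula for induced (jump) transformations. Writing $m = \mu_k$ for the $S$-invariant measure (a multiple of $\hat{\mu}$'s marginal, hence $\sigma$-finite and in fact infinite by the Remark following Proposition \ref{prop:even region}), and $\mu_T$ for the $T$-invariant probability measure on ${\mathbb I}_k$, Abramov's formula relates the entropy of $T$ to the entropy of $S$ scaled by the measure of the inducing set and by the expected return time. The known value $h(T, \mu_T) = \frac{\pi^2}{??}$ from \cite{BKS} for the Rosen map — more precisely, the Rosen map entropy — must be combined with the return-time statistics of $S$ into the inducing set. Since $S$ and $T$ differ only by the finite cascades of mediant steps, and the entropy of each individual affine branch $V_\pm^{-1}$ contributes $0$ additional "branching" beyond bookkeeping, the entropy per $S$-step is the $T$-entropy divided by the expected number of $S$-steps per $T$-step; after normalizing the infinite measure appropriately (using the planar natural extension $\Omega^{\ast}$ and the ratio ergodic theorem, as flagged in the introduction), this yields $h(\hat{S}, \hat{\mu}) = \frac{(k-2)\pi^2}{2k}$. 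Equivalently, and perhaps more cleanly, one computes directly on the natural extension: $h(\hat{S}, \hat{\mu})$ equals $\int \log |S'(x)| \, d\bar{\mu}$ by Rohlin's formula (valid since the natural extension of $S$ has the same entropy as $S$), and $|S'| = |x|^{-2}$ on the $U$-branches while $|S'| = (\lambda x \pm 1)^{-2}$ contributes on the $V$-branches; the integral against the explicit invariant density $\frac{1}{(x-y)^2}$ over the explicit region $\Omega^{\ast}$ can be evaluated, telescoping via the fibre structure described in Figures \ref{figOm8} and \ref{sHatIm9}.

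I expect the main obstacle to be the entropy computation rather than the ergodicity, and within it, the careful handling of the \emph{infinite} invariant measure. Rohlin's formula and Abramov's formula both require care when the underlying measure is not finite; one must either work with the induced probability measure on the finite-measure inducing set (the Rosen-map side, where everything is classical) and then transfer, or justify the direct integral computation on $\Omega^{\ast}$ by a truncation/exhaustion argument. The cleanest route is almost certainly: (1) recall $h(T, \mu_T)$ from \cite{BKS}; (2) show via Lemma \ref{trueMediant} that $T$ is the jump transformation of $S$ over the set $A$ with $\mu_k(A) < \infty$; (3) apply Abramov's formula $h(T, \mu_T) = \frac{1}{\mu_k(A)} h(S, \mu_k|_A \text{-normalized}) \cdot (\text{mean return time})$ in its form appropriate to jump transformations, i.e. $h(S)$ (with $S$ measured against the normalization of $\mu_k$ restricted suitably) relates to $h(T)$ by the factor counting mediant insertions; and (4) simplify using $\sum_{t \ge 1} (\text{length of the } t\text{-th mediant cascade}) \times (\text{its measure})$, which is exactly the quantity governing the ratio $h(T)/h(S)$, to arrive at $\frac{(k-2)\pi^2}{2k}$. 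The bookkeeping of which cylinder sets of $S$ correspond to which Rosen digit $(\ve : r)$, and summing the resulting series, is the genuinely computational heart of the argument.
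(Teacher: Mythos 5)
Your proposal follows essentially the same strategy as the paper: reduce both ergodicity and the entropy computation to the Rosen map $T$, using the fact that $T$ arises from $S$ by inducing, and then apply an Abramov-type identity. The paper's execution is cleaner in two respects. First, it stays entirely at the level of the two-dimensional system: it observes that $\hat S^{k_m}$, restricted to $\Omega_0$, \emph{is} the induced transformation $\hat S_{\Omega_0}$ and is conjugate (via $(x,y)\mapsto(x,-1/y)$) to the Rosen natural-extension map $\mathcal T$ of \cite{BKS}; ergodicity of $\mathcal T$ then immediately yields ergodicity of $\hat S$, without the back-and-forth between $S$ and its natural extension that your plan requires and without the separate sweep-out-set argument. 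Second, for the entropy the paper cites Krengel's formula for conservative transformations (\cite{krengel}), which is exactly the infinite-measure version of the Abramov identity you are reaching for: $h(\hat S,\hat\mu)=h(\hat S_{\Omega_0},\hat\mu_{\Omega_0})\cdot\hat\mu(\Omega_0)$. Since $(\hat S_{\Omega_0},\hat\mu_{\Omega_0})$ is a natural extension of the Rosen map whose entropy, from \cite{Na4}, is $C\cdot\tfrac{(k-2)\pi^2}{2k}$ with $C^{-1}=\hat\mu(\Omega_0)$, the normalizing constants cancel and no enumeration of mediant cascades, return-time statistics, or Rohlin-formula integration over the infinite-measure region is needed. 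Be warned that the explicit formula you write in step (3) of your plan is off: it carries a spurious mean-return-time factor and, as written, would not reduce correctly; Krengel's identity is the correct, clean replacement. Your fallback via Rohlin's formula directly on $(\Omega^\ast,\hat\mu)$ would also require a truncation/exhaustion justification that the paper's route sidesteps entirely.
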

\begin{proof}
An easy calculation shows that
\[
\hat{S}^{k_m} (x, y) \, = \, \left( (M_1 \cdots M_{k_m} )^{-1}
(x), \, (M_1 \cdots M_{k_m} )^{-1} (y) \right)
\]
and
\[
(M_1 \cdots M_{k_m} )^{-1}(x) =  T^{m}(x) .
\]
 Thus,  $\hat{S}^{k_m}$ is the induced transformation
$\hat{S}_{\Omega_0}$ of $\hat{S}$ to $\Omega_0$, and is
conjugate to ${\mathcal T}$ by the isomorphism $(x, y) \to (x,
-1/y)$.  Since ${\mathcal T}$ is ergodic (again, see ~\cite{BKS}), so
is $\hat{S}_{\Omega_0}$. In turn, this implies the ergodicity of
$\hat{S}$.\smallskip\

The entropy $h(\hat{S}, \, \hat{\mu})$ of $(\hat{S}, \,
\hat{\mu})$ is given by the entropy of its induced transformation
on the region $\Omega_0$, as
$$
h(\hat{S}, \, \hat{\mu})=h(\hat{S}_{\Omega_0},\,
\hat{\mu}_{\Omega_0})\cdot \hat{\mu}(\Omega_0),
$$
where $\hat{\mu}_{\Omega_0}$ is the restricted normalized measure
of $\hat{\mu}$ to $\Omega_0$; see~\cite{krengel}. Since
$(\hat{S}_{\Omega_0},\, \hat{\mu}_{\Omega_0})$ is a natural
extension of the Rosen map, and its entropy is
$$
C\cdot \frac{(k-2)\pi^2}{2k},
$$
where $C$ is the normalizing constant of the invariant measure, i.e.,
$$
C^{-1}=\iint_{\Omega_0} \frac{{\rm d}x\, {\rm d}y}{|x-y|^2},
$$
(see~\cite{Na4}), the result follows.
\end{proof}

The following result is an immediate consequence of
Theorem~\ref{thm:ErgodicNatEx}.
\begin{Cor}\label{cor:ErgodicityOfTheMap}
The dynamical system $(S, \mu)$ is ergodic, and its entropy
$h(S,\, \mu)$ is equal to $\frac{(k-2)\pi^2}{2k}$.
\end{Cor}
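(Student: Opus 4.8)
The plan is to transfer ergodicity and the entropy value down from the natural extension to its factor.  By Theorem~\ref{thm:natural-extension} the system $(\mathbb{J}_k,\mathcal{B},\mu_k,S)$ is a measure-theoretic factor of the natural extension $(\Omega^{\ast},\bar{\mathcal{B}},\nu,\hat{S})$ under the coordinate projection $\pi(x,y)=x$, with $\mu$ the corresponding marginal of $\hat{\mu}$.  Ergodicity passes to factors: if $A\subseteq\mathbb{J}_k$ is $S$-invariant then $\pi^{-1}(A)$ is $\hat{S}$-invariant, hence $\hat{\mu}$-null or co-null, hence $A$ is $\mu$-null or co-null; so Theorem~\ref{thm:ErgodicNatEx} gives at once that $(S,\mu)$ is ergodic.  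Since $\mu$ is infinite, I would also record conservativity: the proof of Theorem~\ref{thm:ErgodicNatEx} exhibits the induced map $\hat{S}_{\Omega_0}$ as conjugate to the probability-preserving $\mathcal{T}$, so $\hat{S}_{\Omega_0}$ is conservative and, by Maharam recurrence, so is $\hat{S}$; conservativity descends to the factor in the same way as ergodicity, so $(S,\mu)$ is conservative.

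For the entropy, recall that for a conservative ergodic infinite-measure-preserving transformation the relevant invariant is the \emph{Krengel entropy} $h(S,\mu)=\mu(A)\,h(S_A,\mu_A)$, which by Abramov's formula does not depend on the choice of a finite-measure set $A$ (cf.~\cite{krengel},~\cite{A}).  I would invoke the standard fact that inducing commutes with forming natural extensions: for finite-measure $A\subseteq\mathbb{J}_k$, set $\hat{A}=\pi^{-1}(A)$, so that $(\hat{S}_{\hat{A}},\hat{\mu}_{\hat{A}})$ is the natural extension of $(S_A,\mu_A)$.  In the finite-measure case the natural extension preserves entropy (Rohlin,~\cite{Roh}), so $h(\hat{S}_{\hat{A}},\hat{\mu}_{\hat{A}})=h(S_A,\mu_A)$, and multiplying through by the equal masses $\hat{\mu}(\hat{A})=\mu(A)$ yields $h(\hat{S},\hat{\mu})=h(S,\mu)$.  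By Theorem~\ref{thm:ErgodicNatEx} this common value is $\tfrac{(k-2)\pi^{2}}{2k}$.

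The delicate point is the bookkeeping in the entropy step: one must check that, for the chosen finite-measure set, inducing $S$ in the base corresponds under $\pi$ to inducing $\hat{S}$ in $\Omega^{\ast}$ --- equivalently that the return-time function and a generating partition descend correctly.  This is precisely the mechanism already used in the proof of Theorem~\ref{thm:ErgodicNatEx} via the identities $\hat{S}^{k_m}(x,y)=\big((M_1\cdots M_{k_m})^{-1}(x),\,(M_1\cdots M_{k_m})^{-1}(y)\big)$ and $(M_1\cdots M_{k_m})^{-1}(x)=T^{m}(x)$, so no essentially new argument is required; the remaining verifications are routine.
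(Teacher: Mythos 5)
Your argument is correct and takes essentially the same route as the paper, which dispatches the corollary simply as ``an immediate consequence of Theorem~\ref{thm:ErgodicNatEx}.'' You have supplied the details the paper leaves implicit---ergodicity passing to factors, conservativity via Maharam recurrence, and the Krengel-entropy bookkeeping showing $h(S,\mu)=h(\hat S,\hat\mu)$ through inducing and the entropy-preservation of finite-measure natural extensions---but there is no difference of approach.
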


\noindent \textbf{Remark.} The density function with respect to Lebesgue measure $f$ of the measure
$\mu$ is given by
\[
f(x) \, = \, \int_{\{y \, : \, (x,y) \in \Omega^{\ast} \} }
\frac{{\rm d}y}{(x - y)^2}
\]
which diverges at $ x = 0$.

\begin{Cor}
For a.e. $x \in {\mathbb I}_k$, $\left\{ v_{m,l} \left| x \, - \,
\frac{u_{m,l}}{v_{m,l}}\right| \, : \,  1 \le l \le r_m - 1, m \ge
1 \right\} $ is unbounded.
\end{Cor}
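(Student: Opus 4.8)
The plan is to read the size of the approximation coefficient straight off Equation \eqref{eq:distance-x-to-mediant}. Writing $n = k_m + l$, $x_n = S^n(x)$ and $y_n = -q_m/v_{m,l}$, that identity gives
\[
v_{m,l}^{2}\left| \, x - \frac{u_{m,l}}{v_{m,l}} \, \right| \;=\; \frac{1}{x_n - y_n}\,,
\]
so it suffices to exhibit, for almost every $x$, a sequence of mediant indices along which $x_n - y_n \to 0$. I would hunt for these among the levels $m$ with $\varepsilon_{m+1}(x) = +1$: there Lemma \ref{trivMatCalc} forces $M_{k_m+1} = \cdots = M_{k_m + r_{m+1}-1} = V_+$, and since $V_+^{-1}(z) = z/(1-\lambda z)$ is, in the coordinate $w = 1/z$, simply translation by $-\lambda$, one gets the closed forms
\[
x_{k_m+l} \;=\; \frac{1}{\lambda(r_{m+1}-l) + T^{m+1}x}\,, \qquad -\,y_{k_m+l} \;=\; \frac{q_m}{v_{m,l}} \;=\; \frac{1}{l\lambda + q_{m-1}/q_m}\,,
\]
using $v_{m,l} = l\lambda q_m + q_{m-1}$ and $x_{k_m} = T^m x = 1/(\lambda r_{m+1} + T^{m+1}x)$.

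Substituting and setting $a := \lambda(r_{m+1}-l) + T^{m+1}x > 0$ and $b := l\lambda + q_{m-1}/q_m > 0$, the displayed quantity equals $\frac{ab}{a+b}$. The point is that the $l$'s cancel in the sum: $a + b = \lambda r_{m+1} + T^{m+1}x + q_{m-1}/q_m \le \lambda r_{m+1} + \tfrac{\lambda}{2} + 1$, because $T^{m+1}x \in [-\tfrac{\lambda}{2}, \tfrac{\lambda}{2})$ and $0 \le q_{m-1}/q_m < 1$; whereas the choice $l = \lfloor r_{m+1}/2 \rfloor$ makes both $a$ and $b$ of order $\lambda r_{m+1}$ (explicitly, at least $\tfrac{\lambda}{4}r_{m+1} - \tfrac{\lambda}{2}$), so $ab$ is of order $r_{m+1}^2$. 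Hence, on such a level, $v_{m,l}^{2}\,|x - u_{m,l}/v_{m,l}| = \frac{ab}{a+b} \ge c\,\lambda\, r_{m+1}$ for an absolute constant $c > 0$ and all large $r_{m+1}$, a quantity unbounded as $r_{m+1}\to\infty$.

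It then remains to observe that for almost every $x \in {\mathbb I}_k$ there are, for every $N$, infinitely many $m$ with $\varepsilon_{m+1}(x) = +1$ and $r_{m+1}(x) \ge N$. This is immediate from the ergodicity of the Rosen map $T_k$ with respect to its (finite) invariant measure — which underlies Theorem \ref{thm:ErgodicNatEx}; see also \cite{BKS} — since the cylinder $\{\, z \in {\mathbb I}_k : z > 0,\ r_1(z) \ge N \,\}$ has positive measure and so is entered infinitely often by almost every $T_k$-orbit; intersecting over $N \in \mathbb{N}$ yields a full-measure set of $x$ along whose mediant convergents with $l = \lfloor r_{m+1}/2\rfloor$ the approximation coefficient is unbounded. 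I expect the only mildly delicate points to be the combinatorial bookkeeping behind "$M_{k_m+1} = \cdots = M_{k_m+r_{m+1}-1} = V_+$" (read off from Lemma \ref{trivMatCalc}) and checking that the index $l \approx r_{m+1}/2$ lies in the admissible range; both are routine. Conceptually this unboundedness is the approximation-side shadow of the density of $\mu$ diverging at $0$ (equivalently, of $\mu$ being infinite); alternatively, one can argue directly from the conservativity and ergodicity of $(\Omega^{\ast}, \hat{S}, \hat{\mu})$ via Hopf's ratio ergodic theorem, noting that the $\hat{S}$-orbit of $(x,\infty) \in \Omega^{\ast}$ enters every neighbourhood of the corner $(0,0)$ of $\Omega^{\ast}$ at infinitely many mediant times, where $x_n - y_n$ is correspondingly small.
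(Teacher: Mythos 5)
Your proof is correct (reading $v_{m,l}^2$ for the power, as the argument via Equation~\eqref{eq:distance-x-to-mediant} forces — the paper's own proof establishes the same squared quantity), but it is a genuinely different route from the paper's. The paper argues topologically from the ergodicity of the \emph{infinite-measure} system $(\Omega^{\ast},\hat{S},\hat{\mu})$ proved in Theorem~\ref{thm:ErgodicNatEx}: almost every $\hat{S}$-orbit is dense in $\Omega^{\ast}$, the second coordinates of $\hat{S}^n(x,y)$ and $\hat{S}^n(x,-\infty)$ coalesce, so by Fubini the orbit of $(x,-\infty)$ is dense for a.e.\ $x$, and density forces $(x_n,y_n)$ to visit every neighbourhood of the corner $(0,0)$ of $\Omega^{\ast}$ where $x_n-y_n$ is small. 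You instead produce the small values of $x_n-y_n$ by hand: on levels with $\ve_{m+1}=+1$ the intermediate steps are all $V_+$, the closed forms $x_{k_m+l}=1/(\lambda(r_{m+1}-l)+T^{m+1}x)$ and $-y_{k_m+l}=1/(l\lambda+q_{m-1}/q_m)$ reduce the approximation coefficient to $ab/(a+b)$, and choosing $l\approx r_{m+1}/2$ balances $a$ and $b$ to get $ab/(a+b)\gtrsim \lambda r_{m+1}$; you then only need the much more classical fact that the Rosen map $T_k$ is ergodic with respect to its \emph{finite} invariant measure, so $\ve_{m+1}=+1$ with $r_{m+1}$ arbitrarily large recurs a.e. This is more elementary (it sidesteps conservativity and density of orbits in the infinite-measure setting) and, as a bonus, is quantitative: it locates where the large coefficients occur and shows they grow linearly in $r_{m+1}$. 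The paper's argument is shorter but leans on the full strength of Theorem~\ref{thm:ErgodicNatEx}; your closing remark that one could alternatively invoke conservativity of $\hat{S}$ and the visit to the corner $(0,0)$ is exactly the paper's proof. Minor bookkeeping points you flagged — that $q_{m-1}/q_m<1$ (which follows from $\Omega_0$ lying below $y=-1$, resp.\ $y=-1/R$) and that $l=\lfloor r_{m+1}/2\rfloor$ is admissible once $r_{m+1}\ge 2$ — do check out.
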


\begin{proof} From the ergodicity of $\hat{S}$, for a.e. $(x,y) \in
\Omega^{\ast}$, its forward $\hat{S}$-orbit is dense in
$\Omega^{\ast}$.  It follows that distance between the second
coordinates of $\hat{S}^n (x, y)$ and $\hat{S}^n (x, -\infty)$
tends to $0$. Thus we see that the forward $\hat{S}$-orbit of $(x,
-\infty)$ is also dense in $\Omega^{\ast}$. By Fubini's theorem,
this holds for a.e. $x \in {\mathbb I}_k$. Because
\[
\hat{S}^{n} (x, -\infty) \, = \, \left(S^{n} (x) , -
\frac{q_m}{v_{m,s}} \right)
\]
for $n \, = \, k_{m} + s$ and by (4), we have the assertion.
\end{proof}
\section{Proof of Theorem \ref{witness} --- exhibiting a witness}\label{sec:hurwitz}
In this section,  we  use continued fraction methods to prove Theorem \ref{Haas+Series}.    That is, we display the {\em witness} mentioned in the Introduction.      For each $k>3$,  we call our witness $\tau_{0}$; this value is suggested by the geometry of our planar natural extension.   
Let
\begin{equation}\label{approxCoeffs}
\Theta \left( x, \frac{a}{c}\right) =c^2 \left| \, x \, - \,
\frac{a}{c} \right|
\end{equation}
for $\begin{pmatrix} a & \cdot \\ c & \cdot \end{pmatrix} \in
G_{k}$. Hereafter, when we write $a/c$ we assume that there exists
a matrix $\begin{pmatrix} a & \cdot \\ c & \cdot
\end{pmatrix} \in G_{k}$.   In case $a/c$ is equal to the $n$th mediant 
convergent of $x$ --- with indexing as for Equation \eqref{eq:distance-x-to-mediant} ---, we write $\Theta_n(x)$
instead of $\Theta \left( x,a/c\right)$. 

The approach in both the even and odd index cases is quite similar, the odd case is --as usual with the $G_k$-- a little bit more complicated.  We give full detail of the even case, and outline the odd case.

\subsection{Even index case: $k = 2\ell$}
One of the main ingredients of the aforementioned Borel-type result
from~\cite{KSS} is the fact that if we set
$$
\tau_0:=[\, \overline{(-1:1)^{\ell-2},\, (-1:2)}\, ]=1-\lambda =-
L_{\ell -1},\quad \eta_0:=L_1=\frac{1}{\lambda +1}
$$
(here the bar indicates periodicity), the sequence
$$
(\tau_i,\eta_i):= {\mathcal T}^i(\tau_0,\eta_0),\quad \text{for
$i\geq 0$},
$$
is purely periodic, with period-length $\ell-1$. Here ${\mathcal
T}:\Omega \to \Omega$ is the natural extension map
from~\cite{BKS}, see Equation \eqref{eq:BKSextension}.
Note that $\tau_{\ell-2}=\frac{-1}{\lambda +1}=-L_1$,
$\eta_{\ell-2}=\lambda -1=L_{\ell-1}$, and that ${\mathcal
T}(\tau_{\ell-2},\eta_{\ell-2})=(\tau_0,\eta_0)$. Furthermore,
in~\cite{BKS} it was shown that if
$$
(t_n,v_n)={\mathcal T}^n(x,0),\quad \text{for $x\in [-\lambda
/2,\lambda /2)$, and $n\geq 0$},
$$
one has that
$$
\theta_{n-1}(x)=\frac{v_n}{1+t_nv_n},\qquad \text{and}\quad
\theta_n (x)=\frac{|t_n|}{1+t_nv_n},\qquad n\geq 1.
$$
Due to this
$$
\theta(\tau_i,\eta_i):=\frac{\eta_i}{1+\tau_i\eta_i}\geq
C(k)=\frac{1}{2} $$ for $i\geq 0$ (with equality if $i\equiv 0\,\,
(\text{mod } \ell-1)$ or $i\equiv \ell-2\,\, (\text{mod }
\ell-1)$), and since $\theta_{i-1+n(\ell-1)}(\tau_0)\uparrow
\theta (\tau_i,\eta_i)$, as $n\to\infty$, it follows that for any
$C<C(k)=1/2$,
$$
\theta_n (\tau_0)<C,\quad \text{for at most finitely many $n\geq
0$}.
$$

Recall that $\Omega_0$ is an isomorphic copy of $\Omega$; for
$i=0,1,\dots,\ell-2$, the points $(\tau_i,\eta_i)\in\Omega$
correspond to the points $(\tau_i,K_{i+1})\in\Omega_0$. Since 
the isomorphic copy of the system $(\Omega_0, \mathcal T)$ is induced from 
$(\Omega^{\ast},\hat{S})$, the $\hat{S}$-orbit of $(\tau_0, K_{1})$   has cardinality at least $\ell -1$.     In fact, this
$\hat{S}$-orbit is also purely periodic, but is of cardinality $\ell$:   since $\tau_i<
-2/3\lambda$, for $i=0,1,\dots,\ell-3$, and
$\tau_{\ell-2}>-2/3\lambda$, we have that
$$
\hat{S}(\tau_{\ell-2}, K_{\ell-1})=\hat{S}\left( \frac{-1}{\lambda
+1}, \frac{1}{\lambda -1}\right) =(1,-1),
$$
and
$$
\hat{S}(1,-1)=\left( \frac{1}{1}-\lambda , \frac{1}{-1}-\lambda
\right) =(1-\lambda , -1-\lambda) = (\tau_0, K_1)\;.
$$

  Setting
$$
(T_n,V_n)=\hat{S}^n(\tau_0,-\infty ),\qquad n\geq 0,
$$
it follows from~(\ref{distance}), and the fact that (an isomorphic
copy of) ${\mathcal T}$ is an induced transformation of $\hat{S}$,
that $(\theta_n(\tau_0))_{n\geq 0}$ is a subsequence of
$(\Theta_n(\tau_0))_{n\geq 0}$. In fact the only points in the
latter sequence which are not in the former are among the numbers
$\Theta (1,y)$, with $-1<y<0$. For these numbers we have that
$\Theta (1,y)=\frac{1}{1-y}>\Theta (1,-1)=\frac{1}{2}$.
Consequently, we find for any $C<C(k)=1/2$,
$$
\Theta_n (\tau_0)<C,\quad \text{for at most finitely many $n\geq
0$}.
$$
\subsection{Odd index case: $k = 2\ell +3$}
Analogous to the even index case, we set $\tau_0$ equal to the
``left-top height'' of $\Omega$, and $\eta_0$ equal to the lowest
``height'' of $\Omega$, i.e.,
$$
\tau_0:=- L_{2\ell +1}=R-\lambda,\quad
\eta_0:=L_1=\frac{1}{\lambda +\frac{1}{R}},
$$
and we set
$$
(\tau_i,\eta_i):= {\mathcal T}^i(\tau_0,\eta_0),\quad \text{for
$i\geq 0$}.
$$
Again the sequence $(\tau_i,\eta_i)_{i\geq 0}$ is purely periodic,
with period-length $2\ell$; see~\cite{KSS}. Contrary to the even
case, this sequence is more ``complicated,'' with a kind of
``double loop.'' On $\Omega_0$ the sequence corresponding with
$(\tau_i,\eta_i)_{i\geq 0}$ is the (purely periodic) sequence
$(\tau_i,K_{i+1})_{i\geq 0}$; see Figure~\ref{fig3}.
\begin{figure}
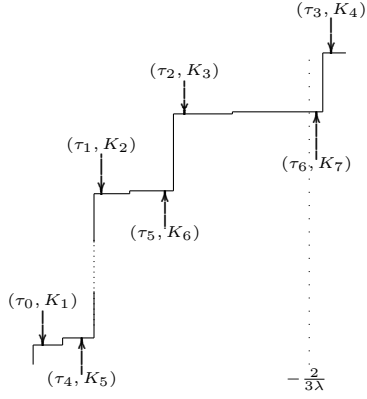

$$
\beginpicture
  \setcoordinatesystem units <0.65 cm, 0.65 cm>

  \putrule from  -9.397 3 to -9.397 3.4
  \putrule from  -9.397 3.4 to -8.794 3.4
  \putrule from  -8.794 3.4 to -8.794 3.545
  \putrule from  -8.794 3.545 to -8.152 3.545
  \putrule from  -8.152 3.545 to -8.152 4.5
  \putrule from  -8.152 5.5 to -8.152 6.497
  \putrule from  -8.152 6.497 to -7.422 6.497
  \putrule from  -7.422 6.497 to -7.422 6.556
  \putrule from  -7.422 6.556 to -6.527 6.556
  \putrule from  -6.527 6.556 to -6.527 8.131
  \putrule from  -6.527 8.131 to  -5.321 8.131
  \putrule from   -5.321 8.131 to -5.321 8.173
  \putrule from   -5.321 8.173 to -3.473 8.173
  \putrule from   -3.473 8.173 to -3.473 9.379
  \putrule from   -3.473 9.379 to -3 9.379

\arrow <4pt> [0.15,0.6] from -9.2 4.1 to -9.2 3.4 \arrow <4pt>
[0.15,0.6] from -8 7.3 to -8 6.497 \arrow <4pt> [0.15,0.6] from
-6.3 8.8 to -6.3 8.131 \arrow <4pt> [0.15,0.6] from -3.3 10.1 to
-3.3 9.379 \arrow <4pt> [0.15,0.6] from -8.4 2.8 to -8.4 3.5459
\arrow <4pt> [0.15,0.6] from -6.7 5.8 to -6.7 6.556 \arrow <4pt>
[0.15,0.6] from -3.6 7.2 to -3.6 8.173

\put{\tiny $(\tau_0,K_1)$} at -9.2 4.3 \put{\tiny $(\tau_1,K_2)$}
at -8 7.5 \put{\tiny $(\tau_2,K_3)$} at -6.3 9 \put{\tiny
$(\tau_3,K_4)$} at -3.3 10.25 \put{\tiny $(\tau_4,K_5)$} at -8.4
2.7 \put{\tiny $(\tau_5,K_6)$} at -6.7 5.7 \put{\tiny
$(\tau_6,K_7)$} at -3.6 7.1 \put{\tiny $-\frac{2}{3\lambda}$} at
-3.8 2.7
\setdots
\putrule from -3.75 3 to -3.75 9.379
\setdots <2pt>
\putrule from  -8.152 3.8 to -8.152 5.6

\endpicture
$$
\caption[$\mathcal{D}$ for $q=9$]{\label{fig3} The sequence
$(\tau_i,K_{i+1})_{i\geq 0}$ in $\Omega_0$ for $k=9$. }
\end{figure}

As in the even case, $\bar{S}$ ``picks up'' a few extra points in
the orbit of $(\tau_0,K_1)$, since both $\tau_{\ell}=-L_1$ and
$\tau_{2\ell}=-L_2$ are larger than $-\frac{2}{3\lambda}$. Similar
to the even case, we find for any $C<C(k)=1/(2\sqrt{\left(
1-\lambda_k/2\right)^2+1})$,
$$
\Theta_n (\tau_0)<C,\quad \text{for at most finitely many $n\geq
0$}.
$$

\section{Equality of Legendre and Lenstra constants}\label{Legendre+Lenstra} 
We define each of the  Legendre  and Lenstra constants for the Rosen mediant maps and show their equality.

\subsection{Definitions}
 Fix an index $k$, and  suppose that there exists $\ell_{k} > 0$ such that
(i)  for any $G_{k}$-irrational $x$ and any finite $G_{k}$-rational $a/c$, 
\[
\left| x \, - \, \frac{a}{c} \right| \, < \, \frac{\ell_{k}}{c^2}
\]
implies $a/c$ is either a Rosen convergent $p_n/q_n$ for some $n \ge
0$, or a mediant Rosen convergent $u_{n,l}/v_{n, l}$ of $x$;  and,
(ii) for any $C > \ell_{k} $, there exist $x$ and $a/c$ such that
\[
\left| x \, - \, \frac{a}{c} \right| \, < \, \frac{C}{c^2}
\]
and $a/c$ is neither a Rosen convergent nor a mediant Rosen
convergent.  Then we call $\ell_{k} > 0$ the {\em Legendre constant} for
mediant Rosen convergents (of index $k$). The Legendre constant
certainly exists for any index $k \ge 4$ because the Legendre
constant for the Rosen continued fractions exists
(see~\cite{R-S}), and the mediant Legendre constant is certainly larger than
or equal to it.\medskip\

We again fix an index $k$, and now suppose that there exists ${\mathcal
L}_{k} > 0$  such that both:   for any $0 < t_1, \, t_2 < {\mathcal
L}_{k}$,  
\begin{equation}
 \lim_{N \to \infty}
 \frac{
  \sharp
  \{1 \le n \le N \, : \, \Theta(M_1 M_2 \cdots M_n (\infty), x) \, < \, t_1 \}            }
{  \sharp
  \{1 \le n \le N \, : \, \Theta(M_1 M_2 \cdots M_n (\infty), x) \, < \, t_2 \}
           }
\, = \, \frac{t_1}{t_2}
\end{equation}
holds for a.e. $x \in {\mathbb J}\,$;  and,  for any $0 < t_2 \, < \,
{\mathcal L}_{k} \, < \, t_1$,
\begin{equation}
 \lim_{N \to \infty}
 \frac{
  \sharp
  \{1 \le n \le N \, : \, \Theta(M_1 M_2 \cdots M_n (\infty), x) \, < \, t_1 \}            }
{  \sharp
  \{1 \le n \le N \, : \, \Theta(M_1 M_2 \cdots M_n (\infty), x) \, < \, t_2 \}
           }
\, < \, \frac{t_1}{t_2}
\end{equation}
for a.e. $x \in {\mathbb J}$.  We call ${\mathcal L}_{k}$ the {\em Lenstra constant}
for the mediant Rosen convergents (of index $k$).    

\subsection{Legendre constant bounded above by Lenstra constant}
 The  following  is a direct
consequence of the corollary of Section~2 in~\cite{Na4}.
\begin{Thm}\label{thm:theorem 4}  Fix any $t_0 > 0$, then for any $t>0$ we have
\begin{equation}\label{HitoshiLimit}
   \lim_{Q \to \infty}\;\; \frac{
   \# \{ \frac{a}{c} \in {G_k}(\infty) \, :
   \, \Theta(\frac{a}{c}, x) < t,
   \quad 0 < c \le Q \} }
  {\# \{\frac{a}{c} \in {G_k}(\infty) \, :
   \, \Theta(\frac{a}{c}, x) < t_0,
   \quad 0 < c \le Q \} }
   \, =  \, \dfrac{t}{t_0}
\end{equation}
for a.e.\ $x$. Here we recall that $G_k (\infty)$ is the set of
parabolic points of the Hecke group $G_{k}\,$.
\end{Thm}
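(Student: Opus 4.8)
The plan is to recognize \eqref{HitoshiLimit} as the instance, for the Rosen mediant map $S$, of the general denominator-equidistribution result of \cite{Na4} (the corollary of Section~2 of \cite{Na4}), which applies to any continued-fraction-type fibred map admitting a Legendre constant; so the task reduces to checking that $S$ qualifies and to recalling the mechanism. First I would invoke the existence of the mediant Legendre constant $\ell_k>0$, noted in Subsection~\ref{Legendre+Lenstra}: it dominates the Legendre constant of the Rosen map $T$, which exists by \cite{R-S}. Concretely, for a $G_k$-irrational $x$ and $t<\ell_k$, every $G_k$-rational $a/c$ with $\Theta(a/c,x)<t$ appears, exactly once, as a term $M_1\cdots M_i(\infty)$ of the convergent list \eqref{mSeq}, i.e.\ as some $p_m/q_m$ or $u_{m,l}/v_{m,l}$; thus, for such $t$, the numerator and denominator of \eqref{HitoshiLimit} count the indices $i$ with $\Theta(M_1\cdots M_i(\infty),x)<t$ (respectively $<t_0$) whose associated denominator is at most $Q$.

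Next I would pass to the planar natural extension to read off the approximation coefficients. Combining \eqref{eq:distance-x-to-mediant} with $\hat{S}^{i}(x,\infty)=\big((M_1\cdots M_i)^{-1}(x),(M_1\cdots M_i)^{-1}(\infty)\big)=:(x_i,y_i)$ gives the clean formula $\Theta(M_1\cdots M_i(\infty),x)=1/(x_i-y_i)$, so that the event $\{\Theta<t\}$ along the $\hat S$-orbit of $(x,\infty)$ is the event $(x_i,y_i)\in\Delta_t$, where $\Delta_t=\{(x,y)\in\Omega^{\ast}:x-y>1/t\}$ has finite $\hat\mu$-measure. Since $(\Omega^{\ast},\hat S,\hat\mu)$ is ergodic for the $\sigma$-finite measure $\hat\mu$ (Theorem~\ref{thm:ErgodicNatEx}), the Hopf ratio ergodic theorem gives, for $\hat\mu$-a.e.\ point and hence---by Fubini, exactly as in the last Corollary of Section~\ref{sec:NatExt}---for $(x,\infty)$ for a.e.\ $x$, that
\[
\lim_{N\to\infty}\frac{\#\{\,i\le N:(x_i,y_i)\in\Delta_t\,\}}{\#\{\,i\le N:(x_i,y_i)\in\Delta_{t_0}\,\}}=\frac{\hat\mu(\Delta_t)}{\hat\mu(\Delta_{t_0})};
\]
and a slicing computation in $y$---using that over $x\in[-\lambda/2,\lambda/2)$ the region $\Omega^{\ast}$ descends to $y=-\infty$, so $\int_{-\infty}^{x-1/t}(x-y)^{-2}\,{\rm d}y=t$---shows $\hat\mu(\Delta_t)=\lambda t$ for $t$ sufficiently small (indeed for all $t$ below the Lenstra value $\mathcal L_k$), whence the ratio equals $t/t_0$.

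The hard part will be the two passages this sketch suppresses---which is precisely why one appeals to \cite{Na4} rather than re-deriving everything. First, one must replace the ordering by the index $i$ with the ordering by denominator $0<c\le Q$; this is a non-trivial reshuffle of the convergent list, since the denominators $v_{m,l}$ and $q_m$ are not globally monotone along it, and controlling it requires the admissibility constraints on Rosen expansions (or, as in \cite{Na4}, a direct treatment of the denominator count). Second, one must extend the conclusion from $t,t_0$ below the Legendre/Lenstra threshold to all positive values, where the good $G_k$-rationals that are not convergents must be incorporated; remarkably, they contribute in just the way needed to keep the count asymptotically proportional to $t$. Our only obligation here is to supply the two hypotheses of the corollary of \cite{Na4}---that $S$ has a Legendre constant and is ergodic---both established above, and \eqref{HitoshiLimit} then follows.
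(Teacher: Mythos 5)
The paper's entire proof of this theorem consists of the one sentence that precedes it: ``The following is a direct consequence of the corollary of Section~2 in~\cite{Na4}.'' Since your proposal ultimately falls back on exactly that citation, your conclusion matches the paper's. However, your framing of what must be verified contains a conceptual slip worth flagging. Equation~\eqref{HitoshiLimit} is a statement purely about the orbit $G_k(\infty)$ --- it counts \emph{all} $G_k$-rationals with small $\Theta$ and bounded denominator, and makes no reference to the mediant map $S$ whatsoever. The reference \cite{Na4} concerns only the Rosen map $T$, so the hypotheses of its Section~2 corollary are carried by $T$ (which has a Legendre constant by \cite{R-S} and is ergodic with a \emph{finite} invariant measure), not by $S$. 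Stating your ``only obligation'' as supplying ``that $S$ has a Legendre constant and is ergodic'' is off target, and would in any case border on circular, since the mediant Legendre/Lenstra theory being built in this section of the paper relies on Theorem~\ref{thm:theorem 4} as an input. Relatedly, your mechanism sketch --- the $\hat{S}$-natural extension, the region $\Delta_t$, the Hopf ratio ergodic theorem --- is recognizably the argument the paper runs in Section~\ref{the-Lenstra-constant} to \emph{evaluate} $\mathcal L_k$, not the argument behind \cite{Na4}'s corollary, which lives on $T$'s natural extension $\Omega_0$ and can use the individual ergodic theorem since $\hat\mu(\Omega_0)<\infty$. Your two acknowledged ``hard parts'' (reordering by denominator instead of by index, and passing from $t<\ell_k$ to all $t>0$) are indeed the substance of \cite{Na4}'s proof; you are right not to pretend to dispose of them, and right that deferring to \cite{Na4} is the way to close the gap --- that is exactly what the authors do.
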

\medskip

From this, we have the following result.
\begin{Prop}  The Legendre constant is less than or equal to the
Lenstra constant, i.e., $\ell_k \le \mathcal L_k$.
\end{Prop}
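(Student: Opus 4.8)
I would prove this by contradiction, using Theorem~\ref{thm:theorem 4} as the engine. Suppose $\ell_k > \mathcal L_k$ and fix real numbers $t_0, t_1$ with $0 < t_0 < \mathcal L_k < t_1 < \ell_k$. For a $G_k$-irrational $x$, the defining property of the Legendre constant (applicable because $t_0, t_1 < \ell_k$) says that every finite $G_k$-rational $a/c$ with $\Theta(a/c, x) < t_i$ is a Rosen convergent or a mediant Rosen convergent of $x$, i.e.\ equals $M_1 M_2 \cdots M_n(\infty)$ for some $n$. Since, for $k \ge 4$, the assignment $n \mapsto M_1 \cdots M_n(\infty)$ is injective (this is the content of the last computation of Section~\ref{sec:definitions_and_fundrel}) with image exactly the set of finite convergents, counting the $a/c$ with $\Theta(a/c,x) < t_i$ is the same as counting the indices $n$ with $\Theta\bigl(M_1 \cdots M_n(\infty), x\bigr) < t_i$; moreover such an $a/c$ has denominator $\le Q$ precisely when the corresponding index $n$ has $c_n \le Q$, where $c_n$ denotes the denominator of $M_1 \cdots M_n(\infty)$.

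The crux is then to pass from the ``cut-off by denominator'' appearing in Theorem~\ref{thm:theorem 4} to the ``cut-off by index'' appearing in the definition of $\mathcal L_k$. Set $Q_N := \max_{1 \le n \le N} c_n$, so that $\{1, \dots, N\} \subseteq \{n : c_n \le Q_N\}$. I would show that the extra indices $\{n > N : c_n \le Q_N\}$ form a set whose cardinality is bounded by a constant depending only on $k$: within each level $m$ the denominators $q_{m-1}, v_{m,1}, v_{m,2}, \dots, v_{m, r_{m+1}-1}$ are increasing, while the next principal denominator $q_{m+1}$ already exceeds every denominator occurring at levels $\le m$ and successive principal denominators grow at a definite geometric rate; hence only boundedly many convergents occurring after index $N$ can have denominator at most $Q_N$. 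Consequently, for $i = 0, 1$ and a.e.\ $x$,
\[
\#\{\, a/c \in G_k(\infty) : \Theta(a/c, x) < t_i,\ 0 < c \le Q_N \,\} \; = \; \#\{\, 1 \le n \le N : \Theta\bigl(M_1 \cdots M_n(\infty), x\bigr) < t_i \,\} \; + \; O_k(1).
\]

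Finally, since $Q_N \to \infty$ as $N \to \infty$, Theorem~\ref{thm:theorem 4} forces the ratio of the left-hand sides for $i = 1$ and $i = 0$ to tend to $t_1 / t_0$; because the $i = 0$ count tends to infinity (by ergodicity, Section~\ref{Ergodicity}), the $O_k(1)$ error is harmless and the ratio of the right-hand sides also tends to $t_1 / t_0$ for a.e.\ $x$. But $0 < t_0 < \mathcal L_k < t_1$, so this directly contradicts the second defining property of the Lenstra constant, which requires that limit to be strictly less than $t_1/t_0$; hence $\ell_k \le \mathcal L_k$. The genuine obstacle here is the uniform discrepancy bound of the previous paragraph: the denominators $c_n$ are \emph{not} monotone in $n$ (a mediant denominator $v_{m,l}$ can exceed the denominator $q_m$ of a later principal convergent), which is why one works with $Q_N = \max_{n \le N} c_n$ and exploits the geometric growth of principal denominators across levels rather than hoping for a literal initial-segment identity; one also has to be careful to intersect the a.e.\ conclusions of Theorem~\ref{thm:theorem 4} and of the Lenstra property with the (everywhere-valid) Legendre property so that all three apply to the same $x$.
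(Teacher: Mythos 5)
Your proposal is correct and follows essentially the same route as the paper: both rest on Theorem~\ref{thm:theorem 4}, the observation that for $t$ below $\ell_k$ the Legendre property identifies the counted $a/c$ exactly with mediant/principal convergents, and a translation from the denominator cutoff in~\eqref{HitoshiLimit} to the index cutoff in the definition of $\mathcal L_k$; the paper runs this directly (linearity therefore holds for all $t<\ell_k$, so $\mathcal L_k\ge\ell_k$), while you run the contrapositive (a $t_1$ with $\mathcal L_k<t_1<\ell_k$ would contradict the second Lenstra property). You are somewhat more explicit than the paper about why swapping the two cutoffs is harmless --- the paper asserts the equality of limits without elaboration --- though your intermediate monotonicity claim (that $q_{m-1}, v_{m,1}, \dots$ increase within a level and $q_{m+1}$ dominates all earlier denominators) is stated a bit too strongly for Rosen fractions, where denominators need not be monotone; what the argument actually needs, and what is true, is only that the discrepancy set is $o$ of the counting function.
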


\begin{proof} Let
\begin{equation}\label{eq:c(n,x,t)}
C(n,x,t)=\# \{ j :\, 1 \le j \le n,\, \Theta (M_1 \cdots M_j
(\infty),x)<t\}.
\end{equation}
If $t$ is smaller than $\ell_k$, then we have
\[
\lim_{N \to \infty} \frac{
   \# \{ \frac{a}{c} \in {G_k}(\infty) \, :
   \, \Theta(\frac{a}{c}, x)< t,
   \quad 0 < c \le q_N \} }
  {\# \{ \frac{a}{c} \in {G_k}(\infty) \, :
   \, \Theta(\frac{a}{c}, x) < t_0,
   \quad 0 < c \le q_N \} }
\, = \, \lim_{n \to \infty} \dfrac{ C(n,x,t)}
  {C(n,x,t_0)}\;,
\]
for almost every $x \in I$.  But this implies that for each such $t$
and  for each of these $x$, the limit as $N$ tends to infinity of
the average of the  counting function, $C(n,x,t)/N$, is a linear
function in $t$. That is, $\mathcal L_k \ge \ell_k$.
\end{proof}
\medskip

\subsection{Harder Inequality}
The idea of the following proof is to begin with an $x_0$ which is
fairly well approximated by some  $G_k$-rational not arising as a
mediant map convergent. We then identify, in terms of the Rosen
fraction expansion of this $x_0$, a whole cylinder set of $x$ all of
which are fairly well approximated by such $G_k$-rationals. There is
then a deficit in the numerator of the fraction of the fundamental
Equation~(\ref{HitoshiLimit}).  This then gives the desired
inequality. We fix $t_0 > 0$ sufficiently small so that it is
smaller than the Legendre constant associated to Rosen continued
fractions.
\begin{Prop}\label{LenSmaller}   Let $C(n,x,t)$ be defined as
in~{\rm (\ref{eq:c(n,x,t)})}. For $t> \ell_k$ and a.e.\ $x\in
{\mathbb I}_k$, one has
\[
\lim_{n \to \infty} \dfrac{C(n,x,t)}{C(n,x,t_0)}< \,
\frac{t}{t_0}\;.
\]
In particular, one has ${\mathcal L}_k \le \ell_k$.
\end{Prop}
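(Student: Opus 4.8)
The plan is to realize the strict inequality as a genuine linear deficit in the numerator of the counting ratio of Theorem~\ref{thm:theorem 4}, produced by transporting one badly placed $G_k$-rational along the whole Rosen-fraction orbit of a generic $x$.

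\emph{Fixing a badly approximated cylinder.} First I would choose $t'$ with $\ell_k<t'<t$. By clause~(ii) in the definition of the Legendre constant there are a $G_k$-irrational $x_0$ and a finite $G_k$-rational $\alpha_0=a_0/c_0$ with $\Theta(\alpha_0,x_0)<t'$ which is neither a Rosen convergent nor a mediant Rosen convergent of $x_0$. Since the reflection $x\mapsto -x$ carries the Rosen (resp.\ mediant) convergents $p_m/q_m$ (resp.\ $u_{m,l}/v_{m,l}$) of $x$ to those of $-x$, I may and do assume $\alpha_0>0$ (note $\alpha_0\ne 0=p_0/q_0$). Then I fix a Rosen cylinder $\Delta\ni x_0$ of sufficiently large depth $N$ so that: (a) $\Theta(\alpha_0,x)<t'$ for every $x\in\Delta$, possible because $c_0$ is fixed and cylinders shrink to points; (b) $\alpha_0$ is neither a Rosen convergent nor a mediant Rosen convergent of any $x\in\Delta$, because the convergents and mediant convergents of denominator at most $c_0$ are determined by boundedly many Rosen digits while all later ones have larger denominator; and (c) $\operatorname{diam}\Delta$ is small enough for the estimate below. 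Write $\delta:=\mu_T(\Delta)>0$, where $\mu_T$ is the finite invariant measure of the Rosen map $T$ (recall $T$ is ergodic, see \cite{R,BKS}).

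\emph{Transporting along the orbit.} Let $x$ lie in the full-measure set on which Theorem~\ref{thm:theorem 4} holds, on which $\tfrac1m\#\{j\le m:T^jx\in\Delta\}\to\delta$ (Birkhoff), and on which $\tfrac1m\log q_m(x)\to h>0$ (Rokhlin's formula and positivity of the entropy of $T$). For each $j$ with $T^jx\in\Delta$, put $g_j=M_1\cdots M_{k_j}=\left(\begin{smallmatrix}p_{j-1}&p_j\\ q_{j-1}&q_j\end{smallmatrix}\right)$ and $\beta_j:=g_j(\alpha_0)\in G_k(\infty)$. Then: (i) $\operatorname{denom}(\beta_j)=c_0(q_j+\alpha_0 q_{j-1})$, which is strictly increasing in $j$, so the $\beta_j$ are pairwise distinct; (ii) by the Proposition of Section~\ref{sec:definitions_and_fundrel}, the Rosen and mediant convergents of $x$ are exactly the first $k_j$ mediant-sequence terms together with the $g_j$-images of the Rosen and mediant convergents of $T^jx$; $\alpha_0>0$ is not among the latter, and $g_j^{-1}$ carries each of the first $k_j$ terms to $\infty$ or to a strictly negative number (these are the $y$-coordinates of iterates of $\hat S$ started at $y=\infty$, which land in $(-\infty,0]$), so $\beta_j$ is neither a Rosen nor a mediant convergent of $x$; (iii) the cocycle identity gives
\[
\Theta(\beta_j,x)=\Theta(\alpha_0,T^jx)\cdot\frac{\alpha_0+q_j/q_{j-1}}{T^jx+q_j/q_{j-1}},
\]
and since $q_j/q_{j-1}\ge 1$ (visible from the second-coordinate fibres of $\Omega_0$) while $|\alpha_0-T^jx|$ is small on $\Delta$, the correction factor is uniformly close to $1$; shrinking $\Delta$ in (c) I may assume $\Theta(\beta_j,x)<t$ for all such $j$.

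\emph{Counting and concluding.} Let $D(Q,x,t)$ be the number of $t$-good $G_k$-rationals of denominator at most $Q$ and $B(Q,x,t)$ the number of those that are \emph{not} Rosen or mediant convergents of $x$. Standard bookkeeping --- the denominators along the mediant sequence increase up to bounded multiplicative fluctuation --- shows that with $Q_n$ the largest denominator among the first $n$ mediant terms one has $C(n,x,t)=D(Q_n,x,t)-B(Q_n,x,t)+o(\log Q_n)$ and, since $t_0$ is below the Rosen Legendre constant, $C(n,x,t_0)=(1+o(1))D(Q_n,x,t_0)$ with $D(Q_n,x,t_0)\asymp\log Q_n$ for a.e.\ $x$. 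By Theorem~\ref{thm:theorem 4}, $D(Q_n,x,t)/D(Q_n,x,t_0)\to t/t_0$; while by the transport step and Birkhoff, $B(Q_n,x,t)\ge\#\{j\le J_n:T^jx\in\Delta\}\sim\delta\,J_n$ with $J_n=\max\{j:\operatorname{denom}(\beta_j)\le Q_n\}\asymp h^{-1}\log Q_n$, whence $\liminf_n B(Q_n,x,t)/D(Q_n,x,t_0)>0$. Therefore, for a.e.\ $x$,
\[
\limsup_{n\to\infty}\frac{C(n,x,t)}{C(n,x,t_0)}\;<\;\frac{t}{t_0}.
\]
Finally, were $\mathcal L_k>\ell_k$, choosing $t\in(\ell_k,\mathcal L_k)$ and $t_2=t_0<\mathcal L_k$ the defining property of the Lenstra constant would force this limit to equal $t/t_0$, a contradiction; hence $\mathcal L_k\le\ell_k$. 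The delicate point --- the main obstacle --- is exactly the uniformity in the transport step: one must ensure simultaneously for \emph{all} $j$ along the orbit that $\beta_j$ is still better than $t$-approximating, is still not a (mediant) convergent, and that the $\beta_j$ remain distinct; arranging $\alpha_0>0$ via the reflection and using $q_j/q_{j-1}\ge 1$ from the explicit shape of $\Omega_0$ is what makes this work, the remainder being bookkeeping combined with the ergodic theorems.
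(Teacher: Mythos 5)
Your overall strategy---locate one bad approximant $\alpha_0$, transport it along the Rosen orbit via the cocycle whenever the orbit returns to a small cylinder $\Delta$ near $x_0$, and thereby produce a linear deficit in the numerator of the Lenstra ratio---is the same in spirit as the paper's proof. The reflection trick to arrange $\alpha_0 > 0$ and the observation that $g_j^{-1}$ carries the first $k_j$ mediant-sequence terms into $(-\infty, 0]$ are both correct and nicely justify that $\beta_j$ is not a (mediant) convergent of $x$. However, there is a genuine gap at the transport step.

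You assert that the correction factor
\[
\frac{\alpha_0 + q_j/q_{j-1}}{T^j x + q_j/q_{j-1}}
\]
is ``uniformly close to $1$'' because $q_j/q_{j-1} \ge 1$ and because ``$|\alpha_0 - T^j x|$ is small on $\Delta$.'' The second claim is not justified. Shrinking $\Delta$ controls $|T^j x - x_0|$, but gives no control on $|\alpha_0 - x_0|$: the definition of $\ell_k$ only yields $c_0^2\,|\alpha_0 - x_0| < t'$, and if $c_0$ is small then $|\alpha_0 - x_0|$ can be of order $t'$, which is not small. When, in addition, $q_j/q_{j-1}$ is close to its lower bound $1$ (and by ergodicity of the Rosen natural extension this happens for a positive proportion of the return times $j$), the correction factor is bounded away from $1$ by a fixed amount depending only on $\alpha_0$ and $\lambda$, and cannot be driven to $1$ by shrinking $\Delta$ or by sending $t' \downarrow \ell_k$. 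Since for $t$ near $\ell_k$ the available slack $t/t'$ is itself near $1$, you cannot conclude $\Theta(\beta_j, x) < t$ for all such $j$, and the deficit you are counting may disappear.

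The paper's proof closes precisely this gap with an extra device you omit. Its cylinder $\mathcal{C}$ is not merely a deep Rosen cylinder around $x_0$; it is formed by \emph{prepending} one additional digit $(\eta : a)$ with $a$ taken large. When $T^n x$ lands in $\mathcal{C}$, the leading digit forces the relevant denominator ratio in the cocycle (the analogue of your $q_j/q_{j-1}$) to be of order $\lambda a$, hence large, so the correction factor is forced arbitrarily close to $1$ regardless of how far $p/q$ is from $x_0$ in absolute terms. Your argument could be repaired either by incorporating this prepending trick, or by restricting the return times to those for which the second natural-extension coordinate $-q_j/q_{j-1}$ stays far below $-1$ (which requires invoking equidistribution in the two-dimensional system for the orbit of $(x, -\infty)$); but as written, the argument does not close.
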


\begin{proof} Fix $t$ such that $t>\ell_k$. Then there exist
$x_0 \in I$ and $\frac{p}{q} \in {G_k(\infty)}$ such that
\[
     \Theta\left( x_0,\frac{p}{q}\right) < t\;, \quad \text{with}\;\;
     \frac{p}{q}\;\; \text{unequal to any mediant convergent} \;.
\]
Consider the Rosen fraction expansions:
\[
x_0 = [\,{\ve_1}:{c_1}, \, {\ve_2}:{c_2}, \,  \cdots \,,\,
         {\ve_n}:{c_n},\,  \cdots\;]\;,
\]
and
\[
\frac{p}{q} \, = \, [\, {\ve_1'}:{d_1}, \, {\ve_2'}:{d_2}, \,
       \cdots\,,\, {\ve_l'}:{d_l}]\;.
\]
Since $\frac{p}{q}$ is not a mediant convergent, at least one of
the following does not hold:
\[
\ve_j \, = \, \ve_j ' \; \mbox{for} \; 1 \le j \le l\;;\; c_j \, =
\, d_j \; \mbox{for} \; 1 \le j \le l-1\;; \; \mbox{and}, \; 1 \le
d_l \le c_l - 1\;.
\]

We can choose a large integer $L$ such that
\[
\left| \, y \, - \, \frac{p}{q}\right| <
\frac{t}{q^2}\;\;\;(\,\text{equivalently, such that}\;  \Theta\left(
y,\frac{p}{q}\right) < t\;)
\]
holds whenever
\[
y \, = \,[\, {\ve_1}:{c_1},\,{\ve_2}:{c_2},\, \cdots\,,\,
         {\ve_L}:{c_L},\,{\ve_{L+1}''}:{c_{L+1}''}
           ,\, {\ve_{L+2}''}:{c_{L+2}''} ,\,
          \cdots \,]\;.
\]
We consider all such $y$.  Now suppose that
\[
z \, = \, \confrac{\eta_1}{a_1} + \confrac{\eta_2}{a_2} + \cdots
      + \confrac{\eta_N}{a_N + y}
\]
and
\[
\frac{P}{Q} \, = \,
        \confrac{\eta_1}{a_1} + \confrac{\eta_2}{a_2} + \cdots
      + \confrac{\eta_N}{a_N} + \confrac{p}{q}\; .
\]
For any such pair $(z, \, \frac{P}{Q})$ we have
\[
\left| z \, - \, \frac{P}{Q}\right| < \frac{t}{Q^2}
\]
if $a_N$ is large enough, where the choice of $a_N$ depends only
on $x_0$, $p/q$, and $L$. One checks that $P/Q$ is not a mediant
convergent of $z$ (and thus in particular not a Rosen convergent).
Fix some such $a_N$ and denote it by $a$ and $\eta_N$ by $\eta$.

Let $\mathcal C$ be the cylinder set of all $x$ such that the
initial segment of the Rosen expansion of $x$ matches these:
\[
\mathcal C := \{\, x \in I\ :\, \, x= \, [\,\eta:a,\, \ve_1:c_1
,\, \ve_2:c_2,\, \ldots\,\,\ve_L:c_L,\, \ldots \;]\;\}\;.
\]
By the above discussion, whenever $T^n(x) \in \mathcal C$ there
exists a $G_k$-rational $\frac{P}{Q}$ such that
$$
\left| x -  \frac{P}{Q}\right| < \frac{t}{Q^2} \quad \mbox{and}
\quad q_{n-c} \le Q < q_{n+c}\;,
$$
where $c$ is a constant independent of $n$. Since the Rosen map is
ergodic with respect to the invariant probability measure given
in~\cite{BKS}, the Ergodic Theorem applies and shows that for a.e.
$x \in {\mathbb I}_k$, we have
\[
\lim_{N \to \infty}\;  \dfrac{\# \{ n \le N : \, T^n(x) \in
{\mathcal C}\}}{N}
   \,  = \, \delta\; ,
\]
where $\delta$ is the measure of $\mathcal C$ with respect to the
invariant ergodic measure. In particular, this limit is
positive.\medskip\

Now let $\Xi_N(x)$ be the number of $\frac{P}{Q} \in
{G_k}(\infty)$ such that
\[
\left| x - \frac{P}{Q}\right| < \frac{t}{Q^2}, \quad Q < q_N,
\]
and $P/Q$ is neither convergent nor mediant convergent of $x$. From
the above, we conclude that
\[
\liminf_{N \to \infty} \frac{\Xi_N} { C_0(N,x,t) } > 0
\]
for a.e. $x\in {\mathbb I}_k$, where $C_0(N,x,t) =\#\{ 1\leq n
\leq N :\, \Theta\left( M_1 \cdots M_{k_n} (\infty),x\right) <
t\}$.\smallskip\

Now,
\begin{eqnarray*}
{} & {} & \limsup_{n \to \infty} \frac{C(n, x, t)}{C(n, x, t_0)} \\
{} & \leq & \limsup_{N \to \infty} \frac{ \# \{ \frac{a}{c} \in
{G_k}(\infty) \, : \Theta(x,\frac{a}{c})< t,
   \,\, 0 < c \le q_n\} \,\,-\,\, \Xi_N(x)}
  {\# \{ \frac{a}{c} \in {G_k}(\infty) :
   \, \Theta(x,\frac{a}{c}) < t_0, \, 0 < c \le q_N \} }\\
{} & = & \frac{t}{t_0} \, - \, \liminf_{N\to\infty} \frac{
\Xi_N(x)}{\# \{ \frac{a}{c} \in {G_k}(\infty) :
   \, \Theta(x,\frac{a}{c}) < t_0, \, 0 < c \le q_N \} } \\
{} & \le &
\frac{t}{t_0} \, - \,
\liminf_{N\to\infty} \frac{C_0(N, x, t)}{C_0(N, x, t_0)}\\
{} & \leq & \frac{t}{t_0} - \delta \quad < \quad \frac{t}{t_0}\,.
\end{eqnarray*}
Consequently, the Lenstra constant of the mediant map cannot be
larger than its Legendre constant.
\end{proof}

We have thus demonstrated the equality of the Legendre and Lenstra
constants for the mediant convergents.
\section{Evaluating the Lenstra constant}\label{the-Lenstra-constant}
In this section, we determine the exact value of the Lenstra
constant, and hence of the Legendre also, for the mediant and the
principal Rosen convergents. Note that for the principal Rosen convergents,
the value of the Lenstra constant was stated---without proof---in Corollary~4.1
of~\cite{BKS}.  

\subsection{Reduction to Geometry of Natural Extension} 
First we consider the natural extension $\hat{T}$ of the Rosen
continued fraction map $T$,   defined as follows: the
region of the natural extension $\Omega_0$ is given
by~\eqref{original-Rosen-region-transformed}, and for $(x,y)\in
\Omega_0$, we define
\[
\hat{T} (x, y) \, = \left(
\begin{pmatrix} - r(x) \lambda & \text{sgn}(x) \\ 1 & 0  \end{pmatrix}
 (x),
\begin{pmatrix} - r(x) \lambda & \text{sgn}(x) \\ 1 & 0  \end{pmatrix}
(y) \right) .
\]
This is bijective on $\Omega_0$ a.e., and the absolutely continuous
invariant probability measure is given by
\[
C \cdot \frac{{\rm d}x\, {\rm d}y}{|x - y|^2}
\]
where $C$ is the normalizing constant (see~\cite{BKS} for the
exact value of $C$ in both even and odd cases).

For
$(x,y)\in\Omega_0$, we set $(x_n,y_n)=\hat{T}^n(x, y)$.  In all that follows, we can extend to the setting of $y = -\infty$.   In this case, 
  (\ref{distance}) implies that 
$\theta_{n-1}(x)=1/(x_n-y_n)$.  (The similarity of the denominator of this last with the denominator in the expression for our invariant measure facilitates the following ergodic theoretic approach.)

As we observed in
Section~\ref{Ergodicity}, the measure is ergodic.  By the
individual ergodic theorem and the standard approximation method  (see say Chapter~4 in~\cite{IK}),
we have 
\begin{equation}\label{property(1)}
\lim_{N \to \infty} \frac{1}{N} \sharp \{ n \, : \, 1 \le n \le N,
\, x_n - y_n > t \} \, = \, C \iint_{\{ (x, y) \in \Omega_{0} : x
- y > t \} } \frac{{\rm d}x\, {\rm d}y}{|x - y|^2}
\end{equation}
for any $t > 0$ (a.e. $(x, y) \in \Omega_{0}$).   Elementary calculus  applies to show that the right hand side is equal to $C \lambda \cdot \frac{1}{t}$ if $t$ is
sufficiently large.  By a simple calculation, we see that $|y_n -
y'_n| \to 0$ as $ n \to \infty$ whenever $(x, y), \, (x, y') \in
\Omega_{0}$. This implies that if~(\ref{property(1)}) holds for
$(x, y)$, then it holds for $(x, y')$ too. Thus we get
\eqref{property(1)} for a.e.\ $x \in {\mathbb I}_k$ and any $y$
such that $(x, y) \in \Omega_{0}$; from this, we also have that  the property holds also  for these values of $x$ and with $y=- \infty$. Therefore, we have 
\begin{equation}\label{property(2)}
\lim_{N \to \infty} \frac{1}{N} \sharp\{ n : \, 1 \le n \le N,\,
\theta_n(x) < c \} = C \cdot c
\end{equation}
if $c$ is sufficiently small (a.e.\ $x$). Thus the
Lenstra constant for the Rosen fractions  is the infimum of those
$t>0$, for which
\begin{equation}\label{property(3)}
\iint_{\{ (x, y) \in \Omega_{0} : x - y > t \} } \frac{{\rm d}x\,
{\rm d}y}{|x - y|^2} = \frac{\lambda}{t}
\end{equation}
holds. It is easily seen, compare with Figures ~\ref{figOm8} and ~\ref{sHatIm9},  that this can be determined by the infimum
$t_0$ of those $t > 0$, for which the points on the line segment
\begin{equation}\label{property(4)}
y = x + t, \quad x \in {\mathbb I}_k,
\end{equation}
are all in $\Omega_{0}$.

Now for the mediant Rosen convergents,  analogous arguments apply.
We use the ratio ergodic theorem, see say Ch.~3 of \cite{A}, instead of the individual
ergodic theorem, and we obtain the completely analogous conclusion, i.e.,  $\mathcal L_k$
is the infimum $t_1$ of $t$ such that
\begin{equation}\label{property(5)}
\iint_{\{ (x, y) \in \Omega^{\ast} : x - y > t \} } \frac{{\rm
d}x\, {\rm d}y}{|x - y|^2} \, = \, \frac{\lambda}{t}
\end{equation}
holds.\medskip\
\noindent We consider the even and the odd indices cases
separately.

\subsection{Even index case: $k = 2\ell$} First we show that the
Lenstra constant for the Rosen convergent is
$\frac{\lambda}{\lambda + 2}$, confirming Corollary~4.1
of~\cite{BKS}.\smallskip\

To find $t_0$ with property~(\ref{property(4)}), it is enough to
check the lines of slope $1$ passing through the interior corners of $\Omega_{0}$.  The associated equations
are
\[
\left\{
\begin{array}{ll}
y = x - (\frac{1}{L_j} + \phi_j) &  1 \le j \le \ell-1 \\
y = x - (1 - \frac{\lambda}{2}). & {}
\end{array} \right.
\]
From these, we see that $t_0 \, = \, \max \{ \frac{1}{L_j} +
\phi_j \, (1 \le j \le \ell-1), \, 1 - \frac{\lambda}{2} \}$.
Since
\[
L_j = \frac{1}{\lambda - L_{j-1}} \qquad \text{and} \qquad
\phi_{j} = - \frac{1}{\phi_{j-1}} - \lambda ,
\]
it follows that
\[
\frac{1}{L_j} + \phi_j = \dfrac{\frac{1}{L_{j-1}} + \phi_{j-1}}
                                {\left|   \phi_{j-1}/L_{j-1} \right|}
\]
for $2 \le j \le \ell-1$. Because $\left(
1/L_{j}\right)_{j=0}^{\ell-1}$ and $\left( |\phi_{j}|
\right)_{j=0}^{\ell-1}$ are monotonically decreasing sequences,
the maximum is either $\frac{1}{L_1} + \phi_1$,
$\frac{1}{L_{\ell-1}} + \phi_{\ell-1}$, or $1-\lambda/2$. Now
recall that $\phi_{\ell-1} = 0$ and $L_{\ell-1} = \lambda -1$ and
$\lambda \ge \sqrt{2}$. These yield the estimate $\frac{\lambda +
2}{\lambda} = \frac{1}{L_1} + \phi_1 \, \ge \,
\frac{1}{L_{\ell-1}} + \phi_{\ell-1}$. Note that it is easy to
show that $\frac{\lambda + 2}{\lambda} = \frac{1}{L_1} + \phi_1
> 1 -\frac{\lambda}{2}$. Consequently, we have that $t_0 =
\frac{1}{L_1} + \phi_1 = \frac{\lambda + 2}{\lambda}$.  And, the result holds. \smallskip\

The result for the mediant Rosen
convergents is the following.

\begin{Prop}\label{prop:LenstaConstantEvenCase}
The Lenstra constant for the mediant Rosen convergent is $\lambda
-1$ when the index is even and not equal to $4$. If $k=4$, then
the Lenstra constant is equal to $\sqrt{2}/2$.
\end{Prop}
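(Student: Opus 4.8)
The plan is to build on the reduction carried out at the start of Section~\ref{the-Lenstra-constant}: by the ratio ergodic theorem, $\mathcal L_k$ is the reciprocal of the quantity $t_1$ introduced there, namely of the infimum of those $t>0$ for which \eqref{property(5)} holds (in the same way that, for the principal Rosen convergents, $\mathcal L_k=\lambda/(\lambda+2)=1/t_0$). So the whole problem reduces to evaluating $t_1$ from the explicit description of $\Omega^{\ast}$. To do so I would first rewrite \eqref{property(5)} via the substitution $u=x-y$ (keeping $x$, Jacobian $1$), which gives
\[
\iint_{\{(x,y)\in\Omega^{\ast}\,:\,x-y>t\}}\frac{{\rm d}x\,{\rm d}y}{(x-y)^2}\;=\;\int_t^{\infty}\frac{L(u)}{u^2}\,{\rm d}u\,,
\]
where $L(u)$ is the Lebesgue measure of the slice $\{x\,:\,(x,x-u)\in\Omega^{\ast}\}$. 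Since $\lambda/t=\int_t^{\infty}\lambda u^{-2}\,{\rm d}u$, equation \eqref{property(5)} holds for a given $t$ exactly when $L(u)=\lambda$ for a.e.\ $u\ge t$. As $\Omega^{\ast}=\bigcup_{j=1}^{\ell+1}J_j\times\bar{K}_j$ is a finite union of rectangles (semi-infinite for $1\le j\le\ell$), $L$ is piecewise linear, $L(u)\le\lambda$ for every $u$, and $L(u)=\lambda$ for all large $u$; hence $t_1=\sup\{u>0\,:\,L(u)<\lambda\}$.

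Next I would compute $L(u)$ rectangle by rectangle. For $2\le j\le\ell$ the slice meets $J_j\times\bar{K}_j$ in an interval whose length is nondecreasing in $u$ and attains its full value $|J_j|$ precisely once $u\ge\phi_j+1/L_j$ — for $j=\ell$ the relevant top of the fiber is $0$, so this threshold is simply $u\ge\lambda/2$. The crucial point is the interplay of the two remaining pieces: using $\phi_1=\tfrac2\lambda-\lambda$ (the image of $-\lambda/2$ under $T$) and $1/L_1=\lambda+1$ one has $\phi_1+1/L_1=\tfrac2\lambda+1$, and as $u$ decreases through this value the length lost from the slice over $J_1$ is exactly compensated by the length gained from the bounded rectangle over $J_{\ell+1}$; a short computation then shows that the combined contribution of $J_1\times\bar{K}_1$ and $J_{\ell+1}\times\bar{K}_{\ell+1}$ remains constant for $u\ge 2/\lambda$ and strictly decreases for $u<2/\lambda$. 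Assembling all the pieces,
\[
t_1\;=\;\max\Bigl\{\,\tfrac2\lambda,\ \tfrac\lambda2,\ \max_{2\le j\le\ell-1}\bigl(\phi_j+1/L_j\bigr)\,\Bigr\}\,.
\]

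It then remains to evaluate this maximum in the two cases. For $k=2\ell>4$, the recursion $\tfrac1{L_j}+\phi_j=\bigl(\tfrac1{L_{j-1}}+\phi_{j-1}\bigr)\big/\bigl(|\phi_{j-1}|/L_{j-1}\bigr)$, together with the monotonicity of $(1/L_j)$ and $(|\phi_j|)$ — exactly as in the paper's treatment of the Rosen convergents just above — shows that $\max_{2\le j\le\ell-1}(\phi_j+1/L_j)=\phi_{\ell-1}+1/L_{\ell-1}=\tfrac1{\lambda-1}$, using $\phi_{\ell-1}=0$ and $L_{\ell-1}=\lambda-1$. Since $\lambda<2$ one checks $\tfrac1{\lambda-1}>\tfrac2\lambda>\tfrac\lambda2$, so $t_1=\tfrac1{\lambda-1}$ and therefore $\mathcal L_k=\lambda-1$. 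For $k=4$ we have $\ell=2$, so the last entry of the maximum is vacuous and $t_1=\max\{\tfrac2\lambda,\tfrac\lambda2\}=\tfrac2\lambda=\sqrt2$ (as $\lambda=\sqrt2$), whence $\mathcal L_4=1/\sqrt2=\sqrt2/2$.

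The main obstacle is the bookkeeping in the middle step: correctly following the piecewise-linear function $L(u)$ through all of its breakpoints and, above all, verifying the exact deficit–surplus cancellation between the slice over $J_1$ and the slice over $J_{\ell+1}$. It is this cancellation that pushes $t_1$ strictly below $\phi_1+1/L_1=(\lambda+2)/\lambda$ — so that the mediant Lenstra constant strictly exceeds the value $\lambda/(\lambda+2)$ of the principal Rosen convergents — and it is also why the answer changes form at $k=4$, where the rectangle $J_{\ell-1}$ responsible for the generic value $1/(\lambda-1)$ is simply not present.
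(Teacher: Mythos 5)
Your proof is correct and follows essentially the same route as the paper: the cancellation between the columns over $J_1$ and over $J_{\ell+1}$ is handled in the paper by translating $J_{\ell+1}\times\bar K_{\ell+1}$ by $-\lambda$ (so that its image stacks exactly on top of $J_1\times\bar K_1$, pushing the top of the fiber over $J_1$ from $-1/L_1$ up to $-\lambda$), whereas you obtain the same thing by tracking the piecewise-linear slice function $L(u)$; both yield $t_1=\max\{2/\lambda,\ \lambda/2,\ \max_{2\le j\le\ell-1}(\phi_j+1/L_j)\}$, after which the same recursion/monotonicity argument identifies the maximum. One discrepancy worth flagging: for $k=4$ the paper's printed proof asserts ``a simple calculation shows that $t_1=\sqrt{2}+1$,'' but that is a slip --- it would give $\mathcal L_4=1/(\sqrt{2}+1)=\sqrt{2}-1$, contradicting both the statement of the Proposition and Theorem~\ref{legLenEqual}; in fact $\sqrt{2}+1$ is the \emph{principal} Rosen $t_0$ for $k=4$, not the mediant $t_1$. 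Your computation $t_1=\max\{2/\lambda,\lambda/2\}=2/\lambda=\sqrt{2}$, hence $\mathcal L_4=\sqrt{2}/2$, is the correct one and repairs this.
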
 
\begin{proof} It is obvious that the measure $\frac{{\rm d}x\, {\rm
d}y}{|x - y|^2}$ is invariant under the translation $(x, y)
\mapsto (x + z, y + z)$ for any real number $z$. We translate the
set $J_{l+1} \times \bar{K}_{l+1}$ by $-\lambda$. Then the image
is $[\phi_{0}, \phi_{1}) \times [-\frac{1}{L_1}, -\frac{1}{L_1} +
1) \, = \, [\phi_{0}, \phi_{1}) \times [-\lambda - 1, \,
-\lambda)$ and we see that $-\lambda < -\frac{1}{L_2} = -\lambda +
\frac{1}{\lambda + R}$. This shows that for the mediant case, we
can get $t_1$ by $\max \{ -\lambda + \phi_1 , \frac{1}{L_j} +
\phi_j , \, (2 \le j \le \ell-1), \, \frac{\lambda}{2} \}$.  Similarly to the above, the maximum is given by either $\lambda + \phi_1,
\, \frac{1}{L_2} + \phi_2$, $\frac{1}{L_{\ell-1}} + \phi_{\ell-1}
= \frac{1}{\lambda - 1}$, or $\frac{\lambda}{2}$. Thus we get $t_1
= \frac{1}{\lambda - 1}$ when $\ell \ge 3$; see Figure~\ref{fig4}.
If $l = 2$, a simple calculation shows that $t_1 = \sqrt{2} + 1$.
\end{proof}

\begin{figure}[h]
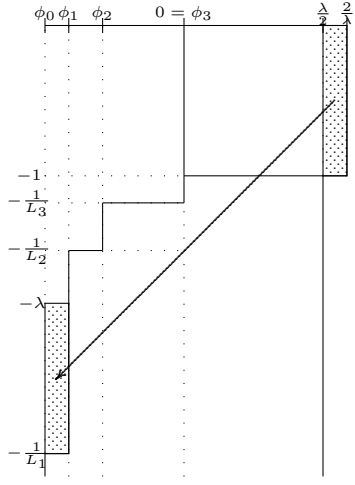

$$
\beginpicture
    \setcoordinatesystem units <0.2cm,0.2cm>
    \arrow<4pt> [.2,.8] from 10 -5 to -8.5 -23.54
    \setplotarea x from -12 to 12, y from -29 to 1
    \put {\tiny{$0=\phi_3$}} at 0 0.8
    \put {\tiny{$\phi_0$}} at -9.24 0.8
    \put {\tiny{$\phi_1$}} at -7.66 0.8
    \put {\tiny{$\phi_2$}} at -5.41 0.8
    \put {\tiny{$\frac{\lambda}{2}$}} at 9.24 0.8
    \put {\tiny{$\frac{2}{\lambda}$}} at 10.83 0.8
    \put {\tiny{$-\frac{1}{L_1}$}} at -10.3  -28.6
    \put {\tiny{$-\frac{1}{L_2}$}} at -10.3  -15
    \put {\tiny{$-\frac{1}{L_3}$}} at -10.3  -11.8
    \put {\tiny{$-1$}} at -10.3  -10
    \put {\tiny{$-\lambda$}} at -10.1 -18.48
    \putrule from -9.24 0 to 10.83 0
    \putrule from -9.24 -30 to -9.24 -28.48
    \putrule from -9.24 -0.5 to -9.24 0.5
    \putrule from -9.24 -28.48 to -7.66 -28.48
    \putrule from -7.66 -28.48 to -7.66 -14.97
    \putrule from -7.66 -0.5 to -7.66 0.5
    \putrule from -7.66 -14.97 to -5.41 -14.97
    \putrule from -5.41 -14.97 to -5.41 -11.8
    \putrule from -5.41 -0.5 to -5.41 0.5
    \putrule from -5.41 -11.8 to 0 -11.8
    \putrule from 0 -11.8 to 0 0.5
    \putrule from 0 -10 to 10.83 -10
    \putrule from 10.83 -10 to 10.83 0.5
    \putrule from 9.24 -30 to 9.24 -10
    \putrule from 9.24 -0.5 to 9.24 0.5
\setshadesymbol <.1pt,.1pt,.1pt,.1pt> ({\fiverm .})
  \shaderectangleson
  \setshadegrid span <1.5pt>
  \putrectangle corners at -9.24 -28.48 and -7.66 -18.48
  \putrectangle corners at 9.24 -10 and 10.83 0
\shaderectanglesoff

\setdots \putrule from -9.24 -28.48 to -9.24 0 \putrule from -7.66
-30 to -7.66 0 \putrule from -5.41 -30 to -5.41 0 \putrule from 0
-30 to 0 -11.7 \putrule from 9.24 -10 to 9.24 0 \putrule from
-9.24 -10 to 0 -10 \putrule from -9.24 -11.8 to 0 -11.8 \putrule
from -9.24 -14.97 to 0 -14.97
\endpicture
$$ \caption[nat-ext]{The translation of the set $J_{l+1}\times \bar{K}_{l+1}$ by $-\lambda$. Here $k=8$.}
\label{fig4}
\end{figure}

\subsection{Odd index case: $k = 2\ell + 3$}
Here also we first confirm Corollary~4.1 of~\cite{BKS}:  the Lenstra constant of the Rosen convergents  equals   $\frac{R}{R + 1}$.

The idea of the calculation is the same as the even case.   Considering the slope $1$ lines through the corners of $\Omega_{0}$,   we find that
\[
t_0 \, = \, \max \left\{ \frac{1}{L_{2j}} + \phi_j \, (1 \le j \le
\ell), \, \frac{1}{L_{2j-1}} + \phi_{\ell+j} , \, (1 \le j \le
\ell+1), \, \frac{1}{R} + \frac{\lambda}{2} \right\}.
\]
Since
\[
\phi_{j+1} = -\frac{1}{\phi_j} - \lambda \quad \text{for} \quad 0
\le j < \ell, \, \ell+1 \le j < 2\ell + 1
\]
and
\[
L_{j+2} = \frac{1}{\lambda_j - L_j} \quad \text{for} \quad 1 \le j
\le 2\ell - 1,
\]
we have
\[
\phi_{j+1} + \frac{1}{L_{2(j+1)}} \, = \, \frac{\phi_{j} +
\frac{1}{L_{2j}}}{-\phi_j \cdot \frac{1}{L_{2j}}} \qquad
\text{and} \qquad \phi_{\ell+j+1} + \frac{1}{L_{2j+)}} \, = \,
\frac{\phi_{\ell+j} + \frac{1}{L_{2j-1}}}{-\phi_{l+j} \cdot
\frac{1}{L_{2j-1}}}
\]
for $1 \le j \le \ell-1$. Moreover, $\phi_{\ell+1} ( = 1 -
\lambda) = -\frac{1}{\phi_l} - 2 \lambda$ and $L_1 =
\frac{1}{2\lambda - L_{\ell}}$ implies
\[
 \phi_{\ell} + \frac{1}{L_{2\ell}} \, = \,
\frac{\phi_{\ell} + \frac{1}{L_{2\ell}}}{-\phi_{\ell} \cdot
\frac{1}{L_{2}}}\; .
\]
Again, $\left(|\phi_j| \, : \, 1 \le j \le \ell+1 \right)$,
$\left(|\phi_{\ell+j}| \, : \, 1 \le j \le \ell+1 \right)$, and
$\left( \frac{1}{L_j} \, : \, 1 \le j \le 2\ell-1 \right)$ are
decreasing sequences. So the above maximum is equal to
\begin{eqnarray*}
{} & {}& \max\left\{ \phi_1 \, + \, \frac{1}{L_2}, \,
\phi_{\ell+1} \, + \, \frac{1}{L_1}, \, \phi_{2\ell+1} \, + \,
\frac{1}{L_{2\ell+1}}, \, \frac{1}{R} \, + \, \frac{\lambda}{2}
\right\} \\
{} & = & \max\left\{ \frac{2}{\lambda} + R , \, \frac{R+1}{R}, \,
\frac{1}{\lambda - R}, \, \frac{1}{R} +  \frac{\lambda}{2}
\right\} .
\end{eqnarray*}
Due to the facts that $R^2 + (2 - \lambda)R -1 = 0$ and $\lambda
/2 < R < 1$, we see that the maximum is equal to $\frac{R+1}{R}$,
and the result follows.\smallskip\

For the mediant Rosen convergents, 
we have the following.
\begin{Prop}\label{prop:LenstaConstantOddCase}
In case of odd index $k$, the Lenstra constant for the mediant
Rosen convergents is $\lambda - R$.
\end{Prop}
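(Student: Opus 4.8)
The plan is to mirror the even‑index argument and reduce everything to a finite computation with the corners of the planar region. By the reduction preceding Proposition~\ref{prop:LenstaConstantEvenCase}, $\mathcal L_k$ is the reciprocal of the infimum $t_1$ of those $t>0$ for which \eqref{property(5)} holds, and — by the same argument with lines of slope one through the corners that gave $t_0$ and the even‑index $t_1$ — this $t_1$ equals the supremum of $x-y$ over the interior corners of $\Omega^{\ast}$, once one has used the translation invariance of $\frac{{\rm d}x\,{\rm d}y}{|x-y|^2}$ to push the two columns of $\Omega^{\ast}$ lying over $[\tfrac{\lambda}{2},\tfrac{2}{\lambda})$ back inside ${\mathbb I}_k$. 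Concretely, I would translate $J_{2\ell+3}\times\bar K_{2\ell+3}$ and $J_{2\ell+4}\times\bar K_{2\ell+4}$ each by $-\lambda$. Using $1-\lambda=\phi_{\ell+1}$, $\tfrac{2}{\lambda}-\lambda=\phi_1$ (the latter from $r(-\tfrac{\lambda}{2})=1$), $\lambda+\tfrac{1}{R}=\tfrac{1}{L_1}$ and $\lambda+R=\tfrac{1}{L_2}$, these images are $[\phi_0,\phi_{\ell+1})\times[-\tfrac{1}{L_1},-\lambda)$ and $[\phi_{\ell+1},\phi_1)\times[-\tfrac{1}{L_2},-\lambda)$, which cap exactly the $\Omega_0$‑columns $J_1$ and $J_2$ (note $\tfrac{1}{L_1},\tfrac{1}{L_2}>\lambda$). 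The resulting set is supported over ${\mathbb I}_k$ and lies below the step function equal to $-\lambda$ on $[\phi_0,\phi_1)$, to $-\tfrac{1}{L_j}$ on $J_j$ for $3\le j\le 2\ell+1$, and to $0$ on $[0,\tfrac{\lambda}{2})$ (the column $J_{2\ell+2}$ having been filled up to height $0$). Reading off the corner at the right end of $J_{2\ell+1}=[\phi_\ell,0)$, whose height is $-\tfrac{1}{L_{2\ell+1}}=-(\lambda-R)$, gives $t_1\ge\tfrac{1}{\lambda-R}$.

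For the reverse bound I would list the remaining corner values: $\phi_1+\lambda=\tfrac{2}{\lambda}$, the value $\tfrac{\lambda}{2}$, and the two chains $\phi_{\ell+m}+\tfrac{1}{L_{2m-1}}$ ($2\le m\le\ell+1$) and $\phi_m+\tfrac{1}{L_{2m}}$ ($2\le m\le\ell$). Exactly as in the odd Rosen‑convergent computation above, the relations $\phi_{j+1}=-\tfrac{1}{\phi_j}-\lambda$ and $L_{j+2}=\tfrac{1}{\lambda-L_j}$ give
\[
\phi_{\ell+m+1}+\tfrac{1}{L_{2m+1}}=\frac{\phi_{\ell+m}+\tfrac{1}{L_{2m-1}}}{|\phi_{\ell+m}|\,/\,L_{2m-1}}\qquad\text{and}\qquad\phi_{m+1}+\tfrac{1}{L_{2(m+1)}}=\frac{\phi_m+\tfrac{1}{L_{2m}}}{|\phi_m|\,/\,L_{2m}}\,;
\]
since $(|\phi_j|)$ decreases and $(L_j)$ increases along each chain, the ratio of successive corner values is monotone, so each chain first decreases and then increases (possibly only one of the two phases occurring) and attains its maximum at an endpoint. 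Hence $t_1=\max\{\tfrac{2}{\lambda},\ \tfrac{\lambda}{2},\ \phi_{\ell+2}+\tfrac{1}{L_3},\ \tfrac{1}{\lambda-R},\ \phi_2+\tfrac{1}{L_4},\ \phi_\ell+\tfrac{1}{L_{2\ell}}\}$ (the last three coalescing when $\ell=1$). It then remains to check $\tfrac{2}{\lambda}\le\tfrac{1}{\lambda-R}$ and $\tfrac{\lambda}{2}\le\tfrac{1}{\lambda-R}$, which follow at once from $\tfrac{\lambda}{2}<R<1$ and $R^2+(2-\lambda)R=1$, and the three inequalities asserting that the ``outer'' endpoint corner values do not exceed $\tfrac{1}{\lambda-R}$; for those I would substitute $\phi_\ell=-\tfrac{1}{\lambda+1}$ (from $r(\phi_\ell)=2$), $\phi_{\ell+1}=1-\lambda$, $\phi_1=\tfrac{2}{\lambda}-\lambda$, clear denominators, and reduce using $R^2+(2-\lambda)R=1$ together with $\lambda^2-\lambda-1\ge0$ (valid since $\lambda=2\cos\tfrac{\pi}{k}\ge 2\cos\tfrac{\pi}{5}$ for odd $k\ge5$). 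This gives $t_1=\tfrac{1}{\lambda-R}$, whence $\mathcal L_k=\lambda-R$.

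The main obstacle is precisely this last step: showing that $\phi_{\ell+2}+\tfrac{1}{L_3}$, $\phi_2+\tfrac{1}{L_4}$ and $\phi_\ell+\tfrac{1}{L_{2\ell}}$ stay below $\tfrac{1}{\lambda-R}$. Several of the relevant comparisons are tight as $k\to\infty$ (there $R\to1$, $\lambda\to2$, and, for instance, $\tfrac{2}{\lambda}$ and $\tfrac{1}{\lambda-R}$ both tend to $1$), so the algebra must be organized so that the decisive sign visibly comes out of the minimal polynomial of $R$ and of $\lambda^2-\lambda-1\ge0$; a convenient device is to express each chain's endpoint via the recursion as a multiple of the corresponding $\Omega_0$‑corner already evaluated in the Rosen computation (e.g.\ $\phi_{\ell+2}+\tfrac{1}{L_3}=(\phi_{\ell+1}+\tfrac{1}{L_1})\cdot L_1/|\phi_{\ell+1}|$ with $\phi_{\ell+1}+\tfrac{1}{L_1}=\tfrac{R+1}{R}$). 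Finally it is cleanest to dispose of $k=5$ (that is, $\ell=1$) at the outset: there both chains degenerate, only $\tfrac{2}{\lambda}$, $\tfrac{\lambda}{2}$ and $\tfrac{1}{\lambda-R}$ survive, and $\tfrac{1}{\lambda-R}$ is the largest by the two elementary inequalities above, so $\mathcal L_5=\lambda-R$ immediately.
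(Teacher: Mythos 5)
Your proposal takes essentially the same route as the paper: translate $J_{2\ell+3}\times\bar K_{2\ell+3}$ and $J_{2\ell+4}\times\bar K_{2\ell+4}$ by $-\lambda$ to cap the columns over $J_1$ and $J_2$, then read off interior corner values, use the decreasing-ratio recursion to reduce each chain to its two endpoints, and compare the finitely many remaining candidates with $\tfrac{1}{\lambda-R}$. A small slip: since $L_{2\ell+1}=\lambda-R$, the corner height at the right end of $J_{2\ell+1}$ is $-\tfrac{1}{L_{2\ell+1}}=-\tfrac{1}{\lambda-R}$, not $-(\lambda-R)$; your stated consequence $t_1\ge\tfrac{1}{\lambda-R}$ is nevertheless correct. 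Also be aware that the remaining three endpoint comparisons are not purely formal: the paper reduces $\phi_2+\tfrac{1}{L_4}\le\tfrac{1}{\lambda-R}$ to $\lambda^2>4-R^2$ (for $k>5$), which it checks by monotonicity at $k=7$, and the final comparison to a single polynomial inequality $\lambda^4-3\lambda^3+5\lambda-2>0$ valid for $\lambda>\sqrt3$; your appeal to $\lambda^2-\lambda-1\ge 0$ alone will not close these without further work, so if you carry out the computation you should expect to need an auxiliary bound of that kind.
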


\begin{proof}
We translate
$(J_{2\ell+3} \times \bar{K}_{2\ell+3}) \cup (J_{2\ell+4} \times
\bar{K}_{2\ell+4})$ by $-\lambda$; see Figure~\ref{fig5}.  Then
its image is
\[
[- \frac{\lambda}{2} , \, \phi_{\ell+1}) \times [-\frac{1}{R} -
\lambda, \, -\lambda)
  \cup
[\phi_{\ell+1}, \, \frac{2}{\lambda} - \lambda) \times [ -R -
\lambda, \, -\lambda)
\]
\begin{figure}[h]
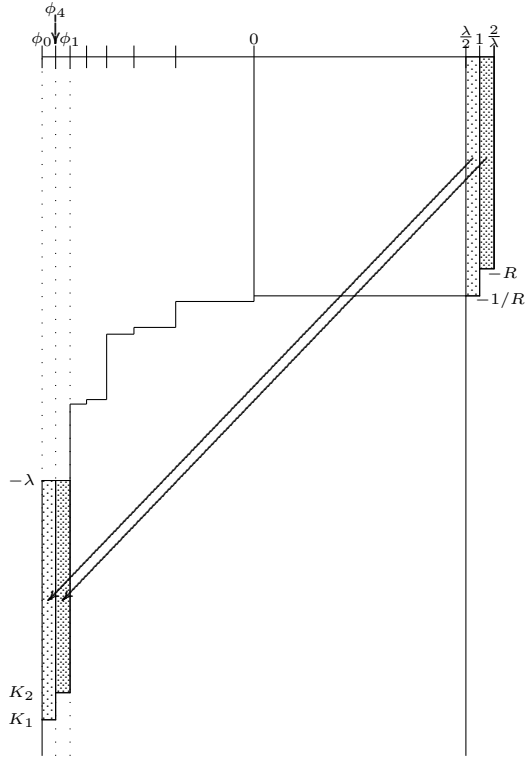

$$
\beginpicture
    \setcoordinatesystem units <0.3cm,0.3cm>
    \setplotarea x from -15 to 15, y from -31 to 2
    \put {\tiny{$0$}} at 0 0.8
    \put {\tiny{$\phi_0$}} at -9.39 0.8
    \put {\tiny{$\phi_1$}} at -8.15 0.8
    \put {\tiny{$\phi_4$}} at -8.79 2.1
    \arrow<4pt> [.2,.8] from -8.79 1.85 to -8.79 0.6
    \put {\tiny{$\frac{\lambda}{2}$}} at 9.39 0.9
    \put {\tiny{$-\lambda$}} at -10.3 -18.79
    \put {\tiny{$\frac{2}{\lambda}$}} at 10.64 0.9
    \put {\tiny{$K_1$}} at -10.3  -29.4
    \put {\tiny{$K_2$}} at -10.3  -28.2
    \put {\tiny{$-1/R$}} at 10.9  -10.8
    \put {\tiny{$-R$}} at 11  -9.6
    \put {\tiny{$1$}} at 10  0.8
    \putrule from 10 -0.5 to 10 0.5
    \putrule from -9.39 0 to 10.64 0
    \putrule from -9.39 -31 to -9.39 -29.4
    \putrule from -9.39 -0.5 to -9.39 0.5
    \putrule from -9.39 -29.4 to -8.79 -29.4
    \putrule from -8.79 -29.4 to -8.79 -28.2
    \putrule from -8.79 -0.5 to -8.79 0.5
    \putrule from -8.79 -28.2 to -8.15 -28.2
    \putrule from -8.15 -28.2 to -8.15 -15.4
    \putrule from -8.15 -0.5 to -8.15 0.5
    \putrule from -8.15 -15.4 to -7.42 -15.4
    \putrule from -7.42 -0.5 to -7.42 0.5
    \putrule from -7.42 -15.4 to -7.42 -15.2
    \putrule from -7.42 -15.2 to -6.53 -15.2
    \putrule from -6.53 -15.2 to -6.53 -12.3
    \putrule from -6.53 -0.5 to -6.53 0.5
    \putrule from -6.53 -12.3 to -5.32 -12.3
    \putrule from -5.32 -12.3 to -5.32 -12
    \putrule from -5.32 -0.5 to -5.32 0.5
    \putrule from -5.32 -12 to -3.47 -12
    \putrule from -3.47 -0.5 to -3.47 0.5
    \putrule from -3.47 -12 to -3.47 -10.85
    \putrule from -3.47 -10.85 to 0 -10.85
    \putrule from 0 -10.85 to 0 0.5
    \putrule from 0 -10.6 to 10 -10.6
    \putrule from 10.64 -9.4 to 10.64 0.5
    \putrule from 10 -10.6 to 10 -9.4
    \putrule from 10 -9.4 to 10.64 -9.4
    \putrule from 10.64 -9.4 to 10.64 0.5
    \putrule from 9.39 -31 to 9.39 -10.6
    \putrule from 9.39 -0.5 to 9.39 0.5
\setshadesymbol <.1pt,.1pt,.1pt,.1pt> ({\fiverm .})
  \shaderectangleson
  \setshadegrid span <1.5pt>
  \putrectangle corners at -9.39 -29.4 and -8.79 -18.79
  \putrectangle corners at 9.39 -10.6 and 10 0
\shaderectanglesoff \setshadesymbol <.1pt,.1pt,.1pt,.1pt> ({\fiverm
.})
  \shaderectangleson
  \setshadegrid span <1pt>
  \putrectangle corners at -8.79 -28.2 and -8.15 -18.79
  \putrectangle corners at 10 -9.4 and 10.64 0
\shaderectanglesoff \arrow<4pt> [.2,.8] from 9.7 -4.5 to -9.1 -24.1
\arrow<4pt> [.2,.8] from 10.3 -4.5 to -8.47 -24.1

\setdots \putrule from -9.39 -29.4 to -9.39 0 \putrule from -8.79
-31 to -8.79 0 \putrule from -8.15 -31 to -8.15 0
\endpicture
$$ \caption[nat-ext]{The translation of the set $(J_{2l+3} \times \bar{K}_{2l+3}) \cup (J_{2l+4} \times
\bar{K}_{2l+4})$ by $-\lambda$. Here $k=9$.} \label{fig5}
\end{figure}
This just fits on
$$
J_1 \times \hat{K}_1 \cup J_2 \times \hat{K}_2 =[-
\frac{\lambda}{2} , \, \phi_{\ell+1}) \times [-\infty , \,
-\frac{1}{R} - \lambda)
  \cup
[\phi_{\ell+1}, \, \frac{2}{\lambda} - \lambda) \times [ - \infty,
\, -R - \lambda) ,
$$
(note that $\phi_{\ell+1} = 1 - \lambda$).  Now we find that the
maximum $t_0$ in the above is cancelled by this justification and
have the new value $\phi_1 + \lambda = \frac{2}{\lambda}$ because
$-\lambda < - \frac{1}{L_3} = -\lambda + \frac{1}{\lambda +
\frac{1}{R}}$. Then we have
\[
t_1 \, = \, \max\left\{ \frac{2}{\lambda}, \, \frac{1}{L_{2j}} +
\phi_j \, (2 \le j \le \ell), \, \frac{1}{L_{2j-1}} +
\phi_{\ell+j} \, (2 \le j \le \ell+1), \, \frac{\lambda}{2}
\right\} .
\]
This is the same as
\[
\max\left\{ \frac{2}{\lambda}, \, \frac{1}{L_{4}} + \phi_2 , \,
\frac{1}{L_{2\ell}} + \phi_{\ell} , \, \frac{1}{L_{3}} +
\phi_{\ell+2} , \, \frac{1}{L_{2\ell+1}} + \phi_{2\ell+1} , \,
\frac{\lambda}{2} \right\} .
\]
One has the following relations,
$$
\begin{array}{lcl}
\frac{1}{L_{4}} + \phi_2 & = &
\frac{1}{\lambda - \frac{2}{\lambda}} - \frac{1}{\lambda + R} \\
{}&{}&{} \\
\frac{1}{L_{2\ell}} + \phi_{\ell} & = &
\frac{R}{\lambda R  - 1} - \frac{1}{\lambda + 1} \\
{}&{}&{} \\
\frac{1}{L_{3}} + \phi_{\ell+2} & = &
\frac{1}{\lambda - 1} - \frac{R}{\lambda R + 1} \\
{}&{}&{} \\
\frac{1}{L_{2\ell+1}} + \phi_{2\ell+1} & = & \frac{1}{\lambda - R\;.
}
\end{array}
$$
Here $\frac{1}{L_{4}} + \phi_2$ and $\frac{1}{L_{2\ell}} +
\phi_{\ell}$ do not appear in the above when $k=5$. After some
calculation, we see the maximum is $\frac{1}{\lambda - R}$. In
order to see that this is indeed the case, note that we obviously
have that $\lambda /2 < 2/\lambda$, and that
\[
\frac{2}{\lambda} < \frac{1}{\lambda - R}
\] follows from $\lambda /2 < R $. Since
$R^2 + (2 - \lambda)R - 1 = 0$ and $\lambda \ge \frac{1 +
\sqrt{5}}{2}\,$, we have
\[
\frac{1}{\lambda - 1} - \frac{R}{\lambda R + 1} \le
\frac{1}{\lambda - R\,} .
\]

We see
\[
\frac{1}{\lambda - \frac{2}{\lambda}} - \frac{1}{\lambda + R} <
\frac{1}{\lambda - R\,} ,
\]
when $k > 5$. Here we used the fact that $\lambda^2 > 4 - R^2$ for
$k > 5$, which has to be checked somehow. Because $\lambda$ and
$R$ is increasing as $k$ increases, it is sufficient to prove it
for $k=7$.\medskip\

Finally we show that
\[
\frac{R}{\lambda R  - 1} \, - \, \frac{1}{\lambda + 1} \, < \,
\frac{1}{\lambda - \frac{2}{\lambda}} \, - \, \frac{1}{\lambda +
R\,} .
\]
for $k \ge 7$.  This inequality is equivalent to
\[
\frac{1}{\lambda - \frac{1}{R}} \, + \, \frac{1}{\lambda + R} \, <
\, \frac{\lambda}{\lambda ^2 - 2} \, + \, \frac{1}{\lambda + 1}\,.
\]
Since $R-1/R = \lambda - 2$, this is equivalent to
\[
\frac{3\lambda - 1}{\lambda^2 + (\lambda - 2) \lambda - 1} \, < \,
\frac{2\lambda^2 + \lambda - 2}{\lambda^3 + \lambda^2 - 2 \lambda
- 2\,} .
\]
Note that both denominators are positive (for $\lambda >
\sqrt{3}$). The last inequality follows from
\[
\lambda^4 - 3 \lambda^3 + 5\lambda - 2 > 0 \qquad \text{for
$\lambda > \sqrt{3}\,$}.
\]
\end{proof} 


\end{document}